\newtheorem{theorem}{Theorem}[section]
\newtheorem{proposition}[theorem]{Proposition}
\newtheorem{lemma}[theorem]{Lemma}
\makeatletter \@addtoreset{equation}{section} \makeatother
\def\na{\nabla}
\def\p{\partial}
\def\tilde{\widetilde}
\newcommand{\beq}{\begin{equation}}
\newcommand{\eeq}{\end{equation}}
\newcommand{\Rmnum}[1]{\expandafter\@slowromancap\romannumeral #1@}
\begin{document}

\title[2D compressible MHD system]
{Global well-posedness for 2D non-resistive compressible MHD system in periodic domain}
\author[Jiahong Wu and Yi Zhu]{Jiahong Wu$^{1}$ and Yi Zhu$^{2}$}

\address{$^1$ Department of Mathematics, Oklahoma State University, Stillwater, OK 74078, United States}

\email{jiahong.wu@okstate.edu}

\address{$^2$ Department of Mathematics, East China University of Science and Technology, Shanghai 200237,  P.R. China}

\email{zhuyim@ecust.edu.cn}

\date{}
\subjclass[2010]{35Q35, 35A01, 35A02, 76W05}
\keywords{ Background magnetic field; Compressible fluids; Non-resistive MHD system; Global classical solutions}

\begin{abstract}
This paper focuses on the 2D compressible magnetohydrodynamic (MHD) equations without
magnetic diffusion in a periodic domain. We present a systematic approach to establishing
the global existence of smooth solutions when the initial data is close to a background
magnetic field. In addition, stability and large-time decay rates are also obtained.
When there is no magnetic diffusion, the magnetic field and the density
are governed by forced transport equations and the problem considered here is difficult. This paper
implements several key observations and ideas to maximize the enhanced dissipation
due to hidden structures and interactions. In particular, the weak smoothing and stabilization generated
by the background magnetic field and the extra regularization in the divergence part of the velocity
field are fully exploited.
Compared with the previous works, this paper appears to be the first to investigate such system on bounded domains and the first to solve this problem by pure energy estimates, which help reduce the complexity in other approaches. In addition, this paper combines the well-posedness with the precise large-time behavior, a strategy that can be extended to higher dimensions.
\end{abstract}

\maketitle

\section{introduction}

\subsection{Background and Main Result}
This paper focuses on the two-dimensional (2D) viscous and non-resistive compressible
magnetohydrodynamic (MHD) system in a periodic domain $\mathbb{T}^2=[-\pi, \pi]^2$,
\begin{equation}\label{mhd0}
	\begin{cases}
		\tilde{\rho}_t +\nabla\cdot(\tilde{\rho}u)=0,  \quad\quad  (t, x) \in \mathbb{R}^+ \times \mathbb{T}^2,\\
		\tilde{\rho}u_t + \tilde{\rho}u\cdot\nabla u - \mu \Delta u-\lambda \nabla\nabla\cdot u + \nabla P(\tilde{\rho}) = B\cdot\nabla B - \frac{1}{2} \nabla |B|^2,\\
		B_t+u\cdot\nabla B +B\nabla\cdot u=B\cdot\nabla u,\\
		\nabla\cdot B=0,\\
		\tilde{\rho}(0,x)=\tilde{\rho}_0(x), \qquad u(0,x)=u_0(x), \qquad B(0,x)=B_0(x),
	\end{cases}
\end{equation}
where $\tilde{\rho} = \tilde{\rho} (t,x) \in \mathbb{R}^{+}$ represents the density of the fluid, $u = u(t,x)$ and $B =B(t,x)$ the velocity and the magnetic field, respectively. $P=P(\tilde{\rho}) \in \mathbb{R}^{+}$ is the scalar pressure which is a strictly increasing function of $\tilde{\rho}$. Here $\lambda$ and $\mu$ are viscous coefficients satisfying
$$ \mu >0, \qquad \lambda +\mu >0.$$

\vskip .1in
The goal here is to understand the well-posedness and stability problem on perturbations near
a background magnetic field. This study is partially motivated by a remarkable
stabilizing phenomenon observed in physical experiments performed on electrically
conducting fluids. These experiments have revealed that a background magnetic field can
actually stabilize MHD flows (see, e.g., \cite{Alex, Bur, Davi0, Davi1, Davi, Gall, Gall2}). We intend to fully understand the mechanism
and establish the observed stabilizing phenomenon as mathematically rigorous results on the
compressible MHD equations.

\vskip .1in
The MHD systems reflect the basic physics laws governing the motion of electrically conducting fluids such as plasmas, liquid metals, and electrolytes. The velocity field obeys the Navier-Stokes equations with
Lorentz forcing generated by the magnetic field while the magnetic field satisfies the Maxwell's equations of electromagnetism. The MHD equations have played key roles in the study of many phenomena in geophysics,
astrophysics, cosmology and engineering (see, e.g., \cite{Bis,Davi, Pri}).

\vskip .1in
Without loss of generality, we assume that $\mu = 1, \lambda= 0$ and $P'(1) = 1$. It is clear that a special solution of \eqref{mhd0} is given by the zero velocity field, the constant $1$ density and the background magnetic field $B_0=e_2$, where $e_2=(0,1)$.
The perturbation $(\rho, u, b)$  around this equilibrium with $\rho = \tilde{\rho}-1$ and $b= B - e_2$ then obeys
\begin{equation}\label{mhd1}
	\begin{cases}
		\rho_t +\nabla\cdot u = - \nabla\cdot(\rho u),  \qquad\qquad \qquad\qquad \qquad \qquad (t,x) \in \mathbb{R}^+ \times \mathbb{T}^2, \\
		u_t - \Delta u  - \left(\begin{array}{c}
			\nabla^{\bot} \cdot b \\
			0 \\
		\end{array}\right) +  \frac{1}{\rho  + 1}\nabla P = - u\cdot \nabla u - \frac{\rho}{\rho + 1}  \Delta u \\
		\qquad \qquad  +   \frac{1}{\rho + 1} b\cdot\nabla b- \frac{1}{2(\rho+1)}\nabla |b|^2 - \frac{\rho}{\rho + 1}\left(
		\begin{array}{c}
			\nabla^{\bot} \cdot b \\
			0 \\
		\end{array}
		\right), \\
		b_t - \nabla^{\bot}u_1=- u\cdot\nabla b + b\cdot\nabla u - b\nabla\cdot u,\\
		\nabla\cdot b=0,
	\end{cases}
\end{equation}
where $\nabla^{\bot}=(\partial_2,-\partial_1)$. In comparison with the original MHD system, (\ref{mhd1}) contains two extra terms
$
-\frac1{\rho+1} \left(\begin{array}{c}
	\nabla^{\bot} \cdot b \\
	0 \\
\end{array}\right)
$  and $-\nabla^{\bot}u_1$ on the left-hand sides of $u$ and $b$, respectively. The first term is further separated into the dominant linear term $ - \left(\begin{array}{c}
	\nabla^{\bot} \cdot b \\
	0 \\
\end{array}\right)$ and the small nonlinear term $- \frac{\rho}{\rho + 1}\left(
\begin{array}{c}
\nabla^{\bot} \cdot b \\
0 \\	\end{array}
\right)$ on the right of the velocity equation. These two terms, resulting from the expansion near the background magnetic field, help enhance the dissipation and the stabilization in the system (\ref{mhd1}).

\vskip .1in
Due to the lack of diffusion or damping mechanism in the equation of the magnetic field and density,
the well-posedness and stability problem considered here appears to be extremely challenging. Standard direct approaches
would not solve this problem. This paper presents several key observations and new ideas to
exploit the enhanced dissipation due to the background magnetic field and the special
coupling structures within the MHD system (\ref{mhd1}). Our main result can be stated as follows.
We use the notation $A \lesssim B$, which means $A \leq CB$ for a universal constant $C>0$.

\begin{theorem}\label{thm1}
	Let $s\ge 10$ be an integer. Assume that the initial data $(\rho_0, u_0, b_0) \in H^s(\mathbb{T}^2)$ satisfies
	$$	\na \cdot b_0 =0, \qquad \int_{\mathbb T^2} b_0(x)\,dx =0.
	$$
	Then there exists a small constant $\epsilon>0$ such that, if
	\begin{equation*}
		\|\rho_0\|_{H^s} + \|u_0\|_{H^s} + \|b_0\|_{H^s} \leq \epsilon,
	\end{equation*}
	system \eqref{mhd1} admits a unique global classical solution $(\rho, u, b)$ in $H^s(\mathbb T^2)$. In addition, the solution $(\rho, u, b)$ admits the following upper bounds,
	for any $t>0$,
	\begin{align*}
		&\sup_{0\leq \tau \leq t} w_0(\tau) \big( \| u\|_{H^s}^2 +\| \rho\|_{H^{s}}^2 +\|b\|_{H^{s}}^2 \big) \lesssim \;\epsilon^2, \\
		&
		\|\rho(t)\|_{H^{s-1}}^2	+\|u(t)\|_{H^{s-1}}^2 + \|b(t)\|_{H^{s-1}}^2 + \int_0^t \| \nabla u\|_{H^{s-1}}^2 \; d\tau \lesssim \;\epsilon^2,\\
		& \sup_{0\leq \tau \leq t} w_k(\tau) \big( \| \partial_2 u\|_{H^{s-k}}^2 +\| \partial_2 \rho\|_{H^{s-k}}^2 +\|\partial_2 b\|_{H^{s-k}}^2 \big) \\
		&  \qquad\qquad \qquad + \int_0^t w_k (\tau) \|\nabla \partial_2 u\|_{H^{s-k}}^2 \; d\tau \lesssim \;\epsilon^2  , \qquad  k=1,2,3,\\
	&\int_0^t w_k(\tau) \| \partial_2^2  \rho\|_{H^{s-1-k}}^2 \; d\tau \lesssim \;\epsilon, \quad
	\int_0^t w_k(\tau) \| \partial_2^2 \nabla^{\bot}\cdot b\|_{H^{s-2-k}}^2 \; d\tau\lesssim \;\epsilon^2,\,\, k=1, 2, 3,\\
	&\sup_{0\leq \tau \leq t} w_k(\tau)\|\partial_1 u_1\|_{H^{s-k}}^2  + \int_0^t w_k(\tau)
	\|\nabla \partial_1 u_1\|_{H^{s-k}}^2  d\tau \lesssim \;\epsilon^2, \,\,\, k=1, 2, 3,
\end{align*}
where the time weight $w_k$ is given by
$$
	w_k(t)=(1+t)^{k-\sigma},       \qquad  k=-1,0,1,2,3,
$$
with $0< \sigma <\frac{1}{2}$ being an arbitrarily fixed constant.
\end{theorem}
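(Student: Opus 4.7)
The plan is to produce a local-in-time smooth solution by the standard Picard iteration (viscous smoothing of the velocity equation combined with transport estimates on $\rho$ and $b$) and to extend it to a global one by a single bootstrap on a master energy functional $\mathcal E(t)$ built from all the quantities listed in the theorem. Writing the continuation hypothesis $\mathcal E(t)\le C\epsilon$ and aiming for $\mathcal E(t)\le C\epsilon^2$ suffices, since these bounds control $\|(\rho,u,b)\|_{H^s}$ and the local theory then extends the solution past any finite time. The condition $\int_{\mathbb T^2} b_0=0$ is preserved by the evolution (integrate the $b$-equation, use $\nabla\cdot b=0$ and periodicity), so Poincar\'e gives $\|b\|_{L^2}\lesssim\|\nabla b\|_{L^2}$ whenever needed.

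The top-level $H^s$ estimate is obtained from a symmetrized energy in which the pressure term is replaced by $P'(1)\int\rho^2$ and the velocity/magnetic coupling is cancelled via $\int u_1\,\nabla^{\bot}\!\cdot b \leftrightarrow \int b\cdot\nabla^{\bot} u_1$, giving
\[
\tfrac{d}{dt}\widetilde{\mathcal E}_s+\|\nabla u\|_{H^s}^2\lesssim \text{cubic nonlinearities},
\]
and multiplying by $w_0=(1+t)^{-\sigma}$ (with $w_0'\le 0$) yields the first and second bullet points. The heart of the proof is to extract dissipation on $\partial_2\rho$, $\partial_2 b$, and $\partial_1 u_1$, which are invisible in this main identity. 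The mechanism is the linearized structure
\[
(u_2)_t-\Delta u_2+\partial_2\rho=\mathrm{NL},\quad (b_2)_t+\partial_1 u_1=\mathrm{NL},\quad (\nabla^{\bot}\!\cdot b)_t-\Delta u_1=\mathrm{NL},
\]
which motivates cross-energy functionals of the type $-\!\int w_k\,\partial^\alpha\partial_2\rho\cdot\partial^\alpha u_2$, $\int w_k\,\partial^\alpha b_2\cdot\partial^\alpha \partial_1 u_1$, and $-\!\int w_k\,\partial^\alpha u_1\cdot\partial^\alpha \nabla^{\bot}\!\cdot b$. Differentiating them in time produces, at leading order, the dissipations $\|\partial_2\rho\|^2$, $\|\partial_1 u_1\|^2$, and (using $\nabla\cdot b=0$ to recover $\partial_2 b$-content from $\nabla^{\bot}\!\cdot b$) $\|\partial_2 b\|^2$; added with small coefficients to $\widetilde{\mathcal E}_s$ they produce a coercive weighted functional whose dissipation term controls the third to fifth bullets. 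The $\partial_2^2\rho$ and $\partial_2^2\nabla^{\bot}\!\cdot b$ estimates follow by differentiating the same structural identities once more and feeding in the dissipation already secured.

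The weighted hierarchy is closed by induction on $k=0,1,2,3$: the time derivative $w_k'=(k-\sigma)(1+t)^{k-\sigma-1}$ generates a lower-weight term that is absorbed by the $(k-1)$-level bound already proved, which is why one must proceed from $k=0$ upwards. The main obstacle, and the most delicate step, will be closing the nonlinearities at each weighted level: products such as $u\cdot\nabla\rho$, $b\cdot\nabla u$, and $\rho\,\Delta u$ pair a high-regularity but non-decaying (or only slowly decaying) factor with a lower-regularity but fast-decaying anisotropic factor, and derivatives must be distributed so that a $\partial_2$-derivative or the $\partial_1 u_1$-factor always lands on the fast side. Anisotropic Sobolev interpolation on $\mathbb T^2$ and the headroom provided by $0<\sigma<1/2$ (ensuring convergence of $\int_0^t(1+\tau)^{-1-\sigma}\,d\tau$-type integrals) will supply the room to bound every nonlinear contribution by $\epsilon\,\mathcal E(t)$, closing the bootstrap and delivering global existence together with all the decay rates stated in the theorem.
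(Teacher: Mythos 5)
Your top-level strategy---a time-weighted energy hierarchy closed by bootstrap, with cross-energy/wave structure exploited to extract the hidden dissipation on $\partial_2\rho$, $\partial_2 b$, and $\partial_1 u_1$---is the paper's strategy. Two of your three cross-energy devices are essentially what the paper carries out: $-\int w_k\,\partial^\alpha\partial_2\rho\cdot\partial^\alpha u_2$ is (after an integration by parts in time) the paper's $\mathcal{P}_k$ estimate, obtained by solving the $u_2$ equation for $\partial_2\rho$ and pairing in $\dot H^{s-1-k}$, and the $\partial_2^2$-weighted version of $\int w_k\,\partial^\alpha u_1\cdot\partial^\alpha\nabla^{\bot}\!\cdot b$ is the paper's $\mathcal{B}_k$ estimate (Proposition \ref{strongDis3}).

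The third cross-energy you propose, $\int w_k\,\partial^\alpha b_2\cdot\partial^\alpha\partial_1 u_1$, does not close. Differentiating in time, the contribution from the $u_1$ momentum equation is
\begin{equation*}
\int \partial^\alpha b_2\,\partial^\alpha\partial_1\Big(\Delta u_1 + \nabla^{\bot}\!\cdot b - \tfrac{1}{\rho+1}\partial_1 P + \cdots\Big)\,dx,
\end{equation*}
and the piece $\int\partial^\alpha b_2\,\partial^\alpha\partial_1\nabla^{\bot}\!\cdot b\,dx$ integrates by parts, via $\nabla\cdot b=0$, to $+\|\partial^\alpha\partial_1 b\|_{L^2}^2$, which is a growth term. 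Since the non-resistive system provides no dissipation or time decay whatsoever on $\partial_1 b$, nor on $\partial_1\rho$ (which enters the same way through the pressure term), this contribution cannot be absorbed at any weighted level $k\ge1$. The same obstruction spoils the \emph{unweighted} $\int u_1\,\nabla^{\bot}\!\cdot b$ cross-energy through the term $\int\partial_1 P\,\nabla^{\bot}\!\cdot b$; only the $\partial_2^2$-weighted version survives.

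This is precisely the obstruction that the paper's combined quantity
\begin{equation*}
\Omega \triangleq \nabla^{\bot}\!\cdot b - \partial_1 P - \tfrac12\partial_1|b|^2 + b\cdot\nabla b_1
\end{equation*}
(equation \eqref{OO}) is designed to overcome, and you do not introduce it. With this definition the $u_1$ equation becomes $\partial_t u_1 + u\cdot\nabla u_1 - \Delta u_1 - \Omega = -\tfrac{\rho}{\rho+1}(\Delta u_1+\Omega)$, and---crucially---the evolution equation \eqref{eqomega} for $\Omega$ reads $\partial_t\Omega + u\cdot\nabla\Omega - 2\partial_1^2 u_1 = \partial_2^2 u_1 + \partial_1\partial_2 u_2 + (\mbox{quadratic terms})$: every \emph{linear} forcing is a second derivative of $u$, hence controlled by viscous dissipation, and no dangling $\partial_1 b$ or $\partial_1\rho$ appears. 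The paper's $\mathcal{F}_k$ estimate then works with the coercive combination $2\|\partial_1 u_1\|^2_{\dot H^{s-k}} + \|\Omega\|^2_{\dot H^{s-k}}$ (the factor $2$ matches the $2\partial_1^2 u_1$ so that the cross terms cancel), and the time integrals of $\|\Omega\|^2$ are supplied separately (Propositions \ref{A} and \ref{F}) by solving \eqref{equ1} algebraically for $\Omega$. Without identifying $\Omega$ the divergence-part estimates cannot be closed, so your plan has a genuine gap at that step. The remaining ingredients you describe (bootstrap ansatz, symmetrized $H^s$ energy with pressure cancellation, conservation of $\int_{\mathbb{T}^2} b=0$, use of $0<\sigma<\tfrac12$ for integrability of the lower-weight terms, and proceeding from low to high $k$) are all consistent with the paper.
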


As aforementioned, the background magnetic field and the hidden interactive structures of (\ref{mhd1})  generate
enhanced dissipation. The upper bounds stated in Theorem \ref{thm1} reflect the regularity and
decay properties due to the enhanced dissipation. We will give a more detailed account on the
mechanisms when we constructed the energy functional below.

\vskip .1in
We briefly describe related results. Since the initial study of Alfven \cite{Alf},
the well-posedness and stability problem on the MHD equations near
a background magnetic field has gained renewed interests and there has been substantial
developments recently. Most of the current results are for the incompressible MHD equations.
\cite{LXZ} and \cite{in7} initiated the stability study on the 2D and 3D incompressible viscous and non-resistive MHD systems and have since inspired many important further investigations (see, e.g.,
\cite{in1,DZ, in8, PZZ,in9, RXZ,Wu3}). Significant stability results have
also been obtained for the incompressible ideal MHD or fully dissipative MHD equations
\cite{in2,in3,in5,in11}. Small data
global well-posedness, stability and large-time behavior problems on the incompressible MHD equations
with various partial dissipation have also attracted considerable interest and a rich array of results have been developed (see, e.g., \cite{BLWu, CW1, CW2,FengFWu, JiWu,  in6, LinWu, WZ2, Wu4,Zhou2}).

\vskip .1in
There are relatively fewer results for the compressible MHD equations. Important well-posedness results on the compressible MHD equations with both viscosity and resistivity have been established
\cite{com1, com2, com3, com4}. When there is no magnetic diffusivity, the well-posedness problem
(even for small initial data) as well as the stability problem near a background magnetic field
becomes extremely difficult. Hu and Lin \cite{hu} were able to establish the well-posedness for
the 2D compressible MHD equations with a special class of data near a background magnetic field.
Two interesting results on the 1D compressible non-resistive  MHD equations are also available \cite{jiangzhang, liyang}. Compressible and incompressible non-resistive
MHD equations in a bounded domain are studied in \cite{tanwang}.
The work of Wu and Wu \cite{WuWu} presented a systematic approach to the stability problem on the 2D
compressible equations without magnetic diffusion in the whole space $\mathbb R^2$ case.
\cite{WuWu} exploited the extra stabilizing effects by converting the governing system into a system of wave equations and employed extensive Fourier analysis. It appears difficult to extend
the approach of \cite{WuWu} to periodic domains.
In addition, the method presented in this paper is different and appears to be much simpler.
Our strategy of combining the well-posedness problem with the precise large-time behavior of the solution appears to be necessary and efficient in solving this type of well-posedness problems. It is very hopeful that the approach of this paper can be extended to the three-dimensional case, namely  $\mathbb{R}^3$ or $\mathbb{T}^3$ case. There are some differences between 2D case and 3D case. For 2D case considered in this paper, when applying $\nabla$ on equations, there will appear at least one good part in nonlinear terms. For example, $\partial_1 u \cdot \nabla b = {\it \partial_1 u_1} \partial_1 b + \partial_1 u_2 {\it \partial_2 b}$ and ${\it\partial_2 u} \cdot \nabla b$ (coming from $\nabla u \cdot \nabla b$) always contain a strong dissipative part. However, this will not  hold for 3D case.

\vskip .1in
\subsection{Enhanced Dissipation and Construction of Energy Functional} We explain
the key ideas and observations in the proof of Theorem \ref{thm1}. The proof is not direct due to
the lack of diffusion or damping in the equations of $\rho$ and $b$. Enhanced dissipation needs to
be exploited in order to offset the potential growth of $\rho$ and $b$. More precisely, we implement
the following key ideas and observations.

\begin{itemize}
\item The first is that the perturbation of magnetic field near the equilibrium $e_2$ generates
extra dissipation in the $x_2$ direction. This reflects the smoothing and stabilizing phenomenon
observed in the aforementioned experiments. Mathematically the linearized system of (\ref{mhd1}) can be
converted into a system of wave equations, which exhibits dissipative and dispersive
regularizations. In fact, the linearization of (\ref{mhd1}) is given by
$$
\begin{cases}
	\p_t \rho = -\na\cdot u, \\
	\p_t u =\Delta u -\na \rho + \left(\begin{array}{c}
		\nabla^{\bot} \cdot b \\
		0 \\
	\end{array}\right), \\
	\p_t b = \na^\perp u_1,
\end{cases}
$$
where $\na \rho$ linearizes $\na P(\rho +1)$,
$$
\na P(\rho +1) = P'(\rho +1) \na \rho \approx \na \rho \quad\mbox{for $\rho\approx 0$ and $P'(1)=1$. }
$$
Differentiating in $t$ and making substitutions, we find
\begin{align*}
	&\p_{tt} u_1 -\Delta \p_t u_1-\Delta u_1 = \p_1\na\cdot u, \\
	&\p_{tt} u_2 -\Delta \p_t u_2 = \p_2\na\cdot u, \\
	&\p_{tt} \rho -\Delta \p_t \rho -\Delta \rho = - \p_1 \na^\perp\cdot b, \\
	&\p_{tt} b -\Delta \p_t b -\Delta b = - \p_1 \na^\perp\rho.
\end{align*}
Furthermore, the equations of $\rho$ and $b$ can be converted into the fourth-order wave equations
\begin{align*}
	& (\p_{tt} -\Delta \p_t -\Delta)^2 \rho  =\p_1^2 \Delta \rho,\\
	& (\p_{tt} -\Delta \p_t -\Delta)^2 b  = \p_1^2 \Delta b.
\end{align*}
The combined operator $\Delta^2- \p_1^2 \Delta = \p_2^2 \Delta$ reveals the regularization in the $x_2$-direction.

\item A simple combination of the linearized equations of $\rho$ and $u_2$,
\beq\label{ww}
\begin{cases}
	\p_t \rho = -\p_2 u_2 - \p_1 u_1, \\
	\p_t u_2 = - \p_2 \rho + \Delta u_2
\end{cases}
\eeq
exhibits the wave structure, which yields the vertical dissipation for $\rho$.

\item Even though the fluid of system (\ref{mhd1}) is compressible,  the divergence of the velocity field behaves better than $\nabla \times u$. As we shall see in later parts,  $\partial_1u_1+\partial_2u_2$ enjoys rapid decay properties.

\item The evolution of the combined quantity $\Omega$ defined by
 \beq\label{OO}
 \Omega \triangleq \nabla^{\bot} \cdot b - \partial_1 P - \frac{1}{2} \partial_1 |b|^2 + b \cdot \nabla b_1,
 \eeq
when coupled with the equation of $u_1$, generates enhanced dissipation. This extra regularization allows us to establish uniform bounds for the time integrals of $\Omega$ and $\p_1 u_1$. The introduction of $\Omega$ helps us control
the velocity nonlinear terms.
\end{itemize}

\vskip .1in
These ideas and observations is incorporated in the construction of the energy functional.
It consists of several layers of time-weighted energy functions with some associated with the linearized system
and some designed for the nonlinear terms.

~\\~
{\bf {Part 1: Energy functions associated with the linearized system}}
~\\~

This part defines several energy functionals to capture the dissipation in the $x_2$ direction.
These functionals are time-weighted to reflect the decay properties of various norms of
the solutions to (\ref{mhd1}). For an arbitrarily fixed constant $0< \sigma <\frac{1}{2}$ we define
the time weight $w_k(t)$ by
$$ w_k(t)=(1+t)^{k-\sigma},  \qquad k=-1,0,1,2,3.
$$
We first define the basic energy $\mathcal{E}_0 $ as
\begin{equation}\label{E0}
\begin{split}
  \mathcal{E}_0(t) \triangleq &   \sup_{0\leq \tau \leq t} w_0(\tau) \big( \| u\|_{H^s}^2 +\| \rho\|_{H^{s}}^2 +\|b\|_{H^{s}}^2 \big) \\
  &+ \int_0^t w_{-1} (\tau)( \| u\|_{H^s}^2 +\| \rho\|_{H^{s}}^2 +\|b\|_{H^{s}}^2 \big) \; d\tau + \int_0^t w_0 (\tau)\|\nabla u\|^2_{H^s} \; d\tau .
\end{split}
\end{equation}
The dissipative energy piece involves $(\p_2 u, \p_2 \rho, \p_2 b)$,
\begin{align*}
  \mathcal{E}_k(t) \triangleq &   \sup_{0\leq \tau \leq t} w_k(\tau) \big( \| \partial_2 u\|_{H^{s-k}}^2 +\| \partial_2 \rho\|_{H^{s-k}}^2 +\|\partial_2 b\|_{H^{s-k}}^2 \big) \\
 &  + \int_0^t w_k (\tau) \|\nabla \partial_2 u\|_{H^{s-k}}^2 \; d\tau  , \qquad  k=1,2,3.
\end{align*}
It's natural to include the corresponding time integral pieces of $\p_2^2\rho$ and $\p_2^2 b$,
\begin{align*}
	&\mathcal{P}_k(t) \triangleq  \int_0^t w_k(\tau) \| \partial_2^2  \rho\|_{H^{s-1-k}}^2 \; d\tau , \qquad k=1, 2, 3,  \\
		&\mathcal{B}_k(t)\triangleq \int_0^t w_k(\tau) \| \partial_2^2 \nabla^{\bot}\cdot b\|_{H^{s-2-k}}^2 \; d\tau , \qquad k=1,2, 3.
\end{align*}

\vskip .1in
It is clear that $\mathcal{E}_2(t)$ can be interpolated via $\mathcal{E}_1(t) $
and $\mathcal{E}_3(t)$. We will not include $\mathcal{E}_2(t)$ in our final energy functional.
It is defined and later used to simplify our presentation. Similarly  $\mathcal{P}_2(t) $, $\mathcal{B}_2(t) $ as well as $\mathcal{F}_2(t) $, $\mathcal{A}_2(t)$ defined below are intermediate energy functionals.

~\\~
{\bf {Part 2: A combined quantity $\Omega$ and energy functions for the nonlinear terms}}
~\\~

Due to the aforementioned wave structures, we expect $u_1$ to behave well in the $x_1$ direction.
We define a combined quantity $\Omega$ to include all terms $\rho$ and $b$ in the equation
of $u_1$, namely
$$
\Omega \triangleq \nabla^{\bot} \cdot b - \partial_1 P - \frac{1}{2} \partial_1 |b|^2 + b \cdot \nabla b_1.
$$
Then the equation of $u_1$ can be written as
\begin{equation} \label{equ1}
  \begin{split}
    \partial_t u_1 + u \cdot \nabla u_1 - \Delta u_1 - \Omega = -\frac{\rho}{\rho+1}\big(\Delta u_1 + \Omega \big).
  \end{split}
\end{equation}
The evolution of $\Omega$ can be derived by considering the evolution of each term in $\Omega$. According to \eqref{mhd1}, we find
\begin{equation}\nonumber
\begin{split}
(\nabla^{\bot} \cdot b)_t + u \cdot \nabla (\nabla^{\bot} \cdot b )- \Delta u_1 =& - \partial_2 u\cdot \nabla b_1 + \partial_1 u\cdot \nabla b_2 +  \nabla^{\bot}\cdot \big(  b \cdot \nabla u - b \nabla \cdot u \big),\\
\big(\frac{1}{2}\partial_1 |b|^2 \big)_t + \frac{1}{2} u \cdot \nabla (\partial_1 |b|^2 )=& \; \frac{1}{2} \partial_1 u \cdot \nabla (|b|^2 )+ \partial_1 \big( b \cdot \nabla^{\bot} u_1 + b \cdot \big( b \cdot \nabla u - b \nabla \cdot u \big) \big), \\
(b\cdot \nabla b_1)_t + u \cdot \nabla (b \cdot \nabla b_1) =  &\; b \cdot \nabla (\partial_2 u_1 )+ \nabla^{\bot} u_1 \cdot \nabla b_1 - b \cdot \nabla (u \cdot \nabla b_1 )\\
& + b \cdot \nabla (b \cdot \nabla u_1 - b_1 \nabla \cdot u) + (b \cdot \nabla u - b \nabla \cdot u) \cdot \nabla b_1, \\
(\partial_1 P)_t + u\cdot \nabla (\partial_1 P) + \partial_1 \nabla \cdot u =& - \partial_1 u \cdot \nabla P - \partial_1 \big(P'(\tilde \rho) \tilde \rho - 1 \big) \nabla \cdot u.
\end{split}
\end{equation}
Without loss of generality, we have assumed that $P'(1)=1$. Combining these equations, we obtain
\begin{equation}\label{eqomega}
\begin{split}
&\; \Omega_t + u \cdot \nabla \Omega - 2 \partial_1^2 u_1 =  \;\partial_2^2 u_1 + \partial_1\partial_2 u_2 \\
&\qquad + \partial_1 u\cdot \nabla b_2 - \frac{1}{2} \partial_1 u \cdot \nabla (|b|^2 )+ \partial_1 u \cdot \nabla P + \nabla^{\bot} \cdot (b \cdot \nabla u)\\
&\qquad - \partial_2 u \cdot \nabla b_1 - \partial_1 (b \cdot \nabla^{\bot} u_1)   + b \cdot \nabla (\partial_2 u_1) + \nabla^{\bot} u_1 \cdot \nabla b_1\\
&\qquad  + b \cdot \nabla(b \cdot \nabla u_1)- \partial_1 \big( b \cdot (b \cdot \nabla u) \big)- \nabla^{\bot} \cdot (b \nabla \cdot u) + \partial_1 (|b|^2 \nabla \cdot u)  \\
&\qquad  + \partial_1  \Big\{  \big(P'(\tilde \rho) \tilde \rho - 1 \big) \nabla \cdot u \Big\}  - \nabla \cdot u (b \cdot \nabla b_1) - b \cdot \nabla (b_1 \nabla \cdot u).
\end{split}
\end{equation}
The equation of $\Omega$ is quite lengthy and we still adopt the strategy of focusing on the linearized system,
$$
\begin{cases}
	\partial_t u_1  = \Delta u_1 +  \Omega,\\
\partial_t \Omega = 2 \partial_1^2 u_1 + \partial_2^2 u_1 + \partial_1\partial_2 u_2.
\end{cases}
$$
This system provides extra regularization for $\p_1 u_1$ and $\Omega$ via its wave structure. The
following functionals are designed to incorporate these regularity and decay properties.
\begin{align*}
& \tilde{\mathcal{A}} (t) \triangleq \int_0^t w_0(\tau)  \|\Omega\|_{H^{s-1}}^2 \; d\tau,\\
&\mathcal{F}_k(t) \triangleq   \sup_{0\leq \tau \leq t} w_k(\tau) \big(\|\partial_1 u_1\|_{H^{s-k}}^2 + \|\Omega\|_{H^{s-k}}^2 \big)\\
 & \qquad\qquad + \int_0^t w_k(\tau)
\|\nabla \partial_1 u_1\|_{H^{s-k}}^2 \; d\tau  , \qquad k=1, 2, 3,\\
&\mathcal{A}_k(t)  \triangleq \int_0^t w_k(\tau) \| \Omega \|_{H^{s-k}}^2 \; d\tau, \qquad k=1,2, 3.
\end{align*}
$\tilde{\mathcal{A}} (t)$ and $\mathcal{A}_k(t)$ are defined to assist in exploiting the dissipation of $\partial_1 u_1$. It is then clear that ${\mathcal E}_k(t)$ and ${\mathcal F}_k(t)$ covers the regularization of $\nabla \cdot u$.

\vskip .1in
Even though the highest-order energy \eqref{E0} may grow in time, we expect uniform boundedness for some lower-order energy. Therefore, we define
\begin{equation*}
\mathfrak{E}(t) \triangleq \sup_{0\leq \tau \leq t} \big(\|u\|_{H^{s-1}}^2 + \|b\|_{H^{s-1}}^2 + \|\rho\|_{H^{s-1}}^2 \big) + \int_0^t \|\nabla u\|_{H^{s-1}}^2 \; d\tau.
\end{equation*}
Finally, to reflect the fact that any lower-order Sobolev's norm of $u_2$ decays in time, we set
\begin{equation*}
\mathfrak{U}(t) \triangleq  \sup_{0\leq \tau \leq t} w_2 (\tau) \|u_2\|_{\dot H^{s-2}}^2 + \int_0^t w_2 (\tau) \|\nabla u_2\|_{\dot H^{s-2}}^2 \; d\tau.
\end{equation*}
The energy functionals $\mathcal{F}_k(t), \mathcal{A}_k(t), \mathfrak{E}(t)$ and $\mathfrak{U}(t)$ are sufficient in handling the nonlinear terms in system \eqref{mhd1}.

~\\~
{\bf {Part 3: Summary of energy functionals and the total energy}}
~\\~

We summarize the energy functionals defined in the previous two parts in two tables.
The first table classifies these functionals according to their purposes: some of them for
the linearized system and some for the nonlinear terms.

\begin{table}[H]

\begin{center}

\begin{tabu} to 0.8  \textwidth{X[1,c]| X[1,c]}

\hline

For Linear Terms & For Nonlinear Terms \\

\hline

$\mathcal{E}_0 $ &    $\mathcal{F}_1$, $\mathcal{F}_3$       \\

$\mathcal{E}_1$, $\mathcal{E}_3$  &  $\mathcal{A}_1$, $\mathcal{A}_3$  \\

$\mathcal{P}_1$, $\mathcal{P}_3$  &   $\tilde{\mathcal{A}}$\\

$\mathcal{B}_1$, $\mathcal{B}_3$  & $\mathfrak{E},\; \mathfrak{U} $    \\

\hline
\end{tabu}

\end{center}

\end{table}

The intermediate energy functionals $\mathcal{E}_2(t) $, $\mathcal{P}_2(t) $, $\mathcal{B}_2(t) $, $\mathcal{F}_2(t) $ and $\mathcal{A}_2(t) $ are omitted from this table.

\vskip .1in
The second table categorizes the energy functionals according to their properties

\begin{table}[H]

\begin{center}

\begin{tabu} to 0.9 \textwidth{X[1,c]| X[3,c]|X[3,c]| X[2,c]}
	
\hline

Basic Energy & Strong Dissipation on Vertical Derivative & Dissipative Structure on Divergence Part & Decay estimate of $u_2$  \\

\hline

$\mathcal{E}_0 $ &    $\mathcal{E}_1$, $\mathcal{E}_3$    &  $\mathcal{F}_1$, $\mathcal{F}_3$  &  $ \mathfrak{U} $ \\

$\mathfrak{E}$ & $\mathcal{P}_1$, $\mathcal{P}_3$ & $\mathcal{A}_1$, $\mathcal{A}_3$  & \; \\

\; & $\mathcal{B}_1$, $\mathcal{B}_3$  &     $\tilde{\mathcal{A}}$         & \; \\

\hline

\end{tabu}

\end{center}

\end{table}

The two tables reveal the intrinsic connection of the energy functionals. The total energy is then
defined to be
\begin{align*}
	  \mathcal{E}_{total}(t)\triangleq & \sum_{i=0,1,3} \mathcal{E}_i(t)+\sum_{i=1,3} \mathcal{P}_i(t)+\sum_{i=1,3} \mathcal{B}_i(t)+\sum_{i=1,3} \mathcal{F}_i(t)+\sum_{i=1,3} \mathcal{A}_i(t) \\
	& \qquad  +  \tilde{\mathcal{A}}(t)  +  \mathfrak{U}(t)+\mathfrak{E}(t).
\end{align*}
Our main efforts are devoted to proving the estimate
\beq\label{bb}
\mathcal{E}_{total}(t) \le C^* \,\mathcal{E}_{total}(0) + C^*\, \mathcal{E}^{\frac32}_{total}(t),
\eeq
where $C^*$ is an absolute constant. An application of the bootstrapping argument
to (\ref{bb}) then yields the desired global \textit{a priori} upper bounds of Theorem \ref{thm1}.
The proof of (\ref{bb}) is extremely lengthy and is accomplished in four sections.

\vskip .1in
The rest of this paper is divided into six sections. Section \ref{pre} presents two tool lemmas
to be used in the proof of (\ref{bb}). Section \ref{basic} establishes the upper bounds for
the energy functionals $\mathcal{E}_0(t)$ and $\mathfrak{E}(t)$ while Section \ref{Strong}
estimates  $\mathcal{E}_k(t)$,  $\mathcal{P}_k(t)$ and  $\mathcal{B}_k(t)$. Section \ref{divp}
focuses on the bounds for  $\tilde{\mathcal{A}}$, $\mathcal{A}_k$ and $\mathcal{F}_k$, and Section
\ref{u2} shows the decay estimates of $u_2$. Section \ref{pp} completes the proof of Theorem \ref{thm1}.

\vskip .3in
\section{Preliminaries}
\label{pre}

This section presents two lemmas to be used in the proof of the propositions in the subsequent
sections. The first lemma provides an upper bound on a commutator associated with
transport equations while the second lemma bounds a triple product arising naturally
in the control of nonlinear terms in transport equations. Throughout the rest of this paper, $\p^s$ with a positive integer $s$ denotes the partial derivative $\partial_1^{\alpha_1}\partial_2^{\alpha_2}$ with $\alpha_1 + \alpha_2 =s$.

\begin{lemma}\label{prop0}
Let $s_0\ge 1$ be an integer. Assume $f\in H^{s_0}(\mathbb{T}^2)$ and $g\in H^{s_0-1}(\mathbb{T}^2)\cap H^{[\frac{s_0}{2}] + 2}$. Write
$$
[\partial^{s_0}, f] g \triangleq \partial^{s_0}(fg) - f \partial^{s_0} g.
$$
Then,
\begin{equation*}
\big\|[\partial^{s_0}, f] g \big\|_{L^2} \lesssim  \big\|\nabla  f\big\|_{H^{[\frac{s_0}{2}] + 1}} \big\|g \big\|_{H^{s_0 - 1}} + \big\| \nabla f\big\|_{H^{s_0 - 1}} \big\|g\big\|_{H^{[\frac{s_0}{2}] + 2}}.
\end{equation*}
\end{lemma}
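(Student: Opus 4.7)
The plan is to reduce to a straightforward Leibniz expansion and then balance derivatives using the 2D Sobolev embedding $H^2(\mathbb{T}^2)\hookrightarrow L^\infty(\mathbb{T}^2)$. First, I would write
$$
[\partial^{s_0}, f] g \;=\; \sum_{k=1}^{s_0} \binom{s_0}{k}\, \partial^k f \cdot \partial^{s_0-k} g,
$$
so that the problem reduces to estimating $\|\partial^k f \cdot \partial^{s_0-k} g\|_{L^2}$ for each $1 \le k \le s_0$, and I would split the sum at the midpoint $k = [\frac{s_0}{2}]$, matching the two terms on the right-hand side of the stated bound.

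For the low range $1 \le k \le [\frac{s_0}{2}]$, I would put $\partial^k f$ in $L^\infty$ and $\partial^{s_0-k} g$ in $L^2$. By Hölder together with Sobolev embedding,
$$
\|\partial^k f\|_{L^\infty} \lesssim \|\partial^k f\|_{H^2} \lesssim \|\nabla f\|_{H^{k+1}} \lesssim \|\nabla f\|_{H^{[\frac{s_0}{2}]+1}},
$$
while $\|\partial^{s_0-k} g\|_{L^2} \lesssim \|g\|_{H^{s_0-1}}$ since $k \ge 1$; this produces the first term of the stated estimate. For the high range $[\frac{s_0}{2}]+1 \le k \le s_0$ I would reverse the roles, placing $\partial^{s_0-k} g$ in $L^\infty$:
$$
\|\partial^{s_0-k} g\|_{L^\infty} \lesssim \|\partial^{s_0-k} g\|_{H^2} \lesssim \|g\|_{H^{s_0-k+2}} \lesssim \|g\|_{H^{[\frac{s_0}{2}]+2}},
$$
the last inequality being the arithmetic check $s_0-k+2 \le [\frac{s_0}{2}]+2$ for $k \ge [\frac{s_0}{2}]+1$, which one verifies in the even and odd parities of $s_0$ separately. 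Meanwhile $\|\partial^k f\|_{L^2} \lesssim \|\nabla f\|_{H^{k-1}} \lesssim \|\nabla f\|_{H^{s_0-1}}$ because $k \le s_0$, giving the second term.

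Summing the two ranges over $k$ yields the claimed inequality. This is a Kato--Ponce/Moser-style commutator estimate, so no step is genuinely hard; the only point that requires a moment of care is the arithmetic on the Sobolev indices, to confirm that the choices $[\frac{s_0}{2}]+1$ and $[\frac{s_0}{2}]+2$ are sharp enough to absorb both the endpoint cases $k=1$ and $k=s_0$ and the parity cases of $s_0$. A boundary value such as $s_0=1$, where the commutator reduces to $\partial f \cdot g$ and only the second term on the right-hand side is needed, is automatically covered by the high-range bound.
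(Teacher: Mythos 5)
Your proof is correct and follows essentially the same route as the paper: Leibniz expansion, split the sum at $k=[\tfrac{s_0}{2}]$, place the lower-order factor in $L^\infty$ via the two-dimensional embedding $H^2(\mathbb{T}^2)\hookrightarrow L^\infty$, and conclude by the stated index arithmetic. The extra attention you pay to the parity check and the boundary case $s_0=1$ is a slight expansion of, but not a departure from, the paper's argument.
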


\begin{proof}
By Leibniz formula,
\begin{equation*}
\begin{split}
[\partial^{s_0}, f] g =  \partial^{s_0}(fg) - f \partial^{s_0} g =
\sum_{k = 1}^{s_0} C_{s_0}^k \partial^k f \partial^{s_0 - k}g.
\end{split}
\end{equation*}
It then follows from H\"{o}lder's inequality that
\begin{equation*}
\begin{split}
\big\|[\partial^{s_0}, f] g\big\|_{L^2} \lesssim & \;
\sum_{k = 1}^{[s_0 / 2]}  \big\|\partial^k f \partial^{s-k} g\big\|_{L^2} +
\sum_{k = [\frac{s_0}{2}] + 1}^{s_0} \big\| \partial^k f \partial^{s-k}g \big\|_{L^2} \\
\lesssim &  \; \sum_{k = 1}^{[s_0 / 2]}  \big\|\partial^k f \big\|_{L^\infty} \big\|\partial^{s-k} g \big\|_{L^2} +
\sum_{k = [\frac{s_0}{2}] + 1}^{s_0} \big\| \partial^k f \big\|_{L^2} \big\|\partial^{s-k}g \big\|_{L^\infty} \\
\lesssim &  \; \|\nabla  f \|_{H^{[\frac{s_0}{2}] + 1}} \|g\|_{H^{s_0 - 1}} + \| \nabla f\|_{H^{s_0 - 1}} \|g\|_{H^{[\frac{s_0}{2}] + 2}}.
\end{split}
\end{equation*}
This completes the proof of the lemma.
\end{proof}

A special consequence of Lemma \ref{prop0} is the following anisotropic upper bound for the integral of a triple product.

\begin{lemma}\label{prop1}
Let $s_0 \ge 1$ be an integer. Let $f=(f_1, f_2)$ and $g=(g_1, g_2)$ be two smooth vector fields on $\mathbb{T}^2$. Then the following anisotropic inequality holds,
\begin{equation}\nonumber
\begin{split}
&\Big|\int_{\mathbb{T}^2} \partial^{s_0} (f \cdot \nabla g ) \partial^{s_0} g \; dx \Big| \\
\lesssim & \;
\Big(\|\nabla f_1\|_{H^{[\frac{s_0}{2}] + 1}} \|\partial_1 g\|_{H^{s_0-1}} +\|\nabla f_2\|_{H^{[\frac{s_0}{2}] + 1}} \|\partial_2 g\|_{H^{s_0-1}} \Big)\|g\|_{\dot H^{s_0}} \\
&+\Big(\|\nabla f_1\|_{H^{s_0-1}} \| \partial_1 g\|_{H^{[\frac{s_0}{2}] + 2}}+ \|\nabla f_2\|_{H^{s_0-1}} \| \partial_2 g\|_{H^{[\frac{s_0}{2}] + 2}}\Big) \|g\|_{\dot H^{s_0}} \\
& + \|\nabla \cdot f\|_{H^2}\|g\|_{\dot H^{s_0}}^2.
\end{split}
\end{equation}
\end{lemma}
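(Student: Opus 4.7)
The plan is to decompose $\partial^{s_0}(f\cdot\nabla g)$ via the Leibniz rule into a principal transport part and two commutators, and to control each piece by a different mechanism. Concretely, I would write
\[
\partial^{s_0}(f\cdot\nabla g) \;=\; f\cdot\nabla\partial^{s_0}g \;+\; [\partial^{s_0},f_1]\partial_1 g \;+\; [\partial^{s_0},f_2]\partial_2 g,
\]
multiply by $\partial^{s_0}g$, and integrate over $\mathbb{T}^2$.

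For the principal part, the standard energy cancellation applies: since the torus has no boundary,
\[
\int_{\mathbb{T}^2} f\cdot\nabla\partial^{s_0}g\,\partial^{s_0}g\,dx \;=\; \tfrac12\int_{\mathbb{T}^2} f\cdot\nabla|\partial^{s_0}g|^2\,dx \;=\; -\tfrac12\int_{\mathbb{T}^2}(\nabla\cdot f)|\partial^{s_0}g|^2\,dx,
\]
which is bounded by $\tfrac12\|\nabla\cdot f\|_{L^\infty}\|g\|_{\dot H^{s_0}}^2$. The 2D Sobolev embedding $H^2\hookrightarrow L^\infty$ then produces exactly the last term $\|\nabla\cdot f\|_{H^2}\|g\|_{\dot H^{s_0}}^2$ on the right-hand side of the claim.

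For the commutator pieces, I would apply Lemma \ref{prop0} with the roles of $(f,g)$ there played by $(f_i,\partial_i g)$ for $i=1,2$, obtaining
\[
\|[\partial^{s_0},f_i]\partial_i g\|_{L^2} \;\lesssim\; \|\nabla f_i\|_{H^{[s_0/2]+1}}\|\partial_i g\|_{H^{s_0-1}} + \|\nabla f_i\|_{H^{s_0-1}}\|\partial_i g\|_{H^{[s_0/2]+2}}.
\]
Pairing by Cauchy--Schwarz with $\|\partial^{s_0}g\|_{L^2}\le\|g\|_{\dot H^{s_0}}$ and summing over $i=1,2$ yields precisely the first two lines of the stated inequality.

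I do not anticipate a serious obstacle; the proof is essentially the assembly of an integration by parts with the commutator estimate from Lemma \ref{prop0}. The only subtlety is index bookkeeping in the commutator step: one must keep one factor at the slightly higher regularity $H^{[s_0/2]+2}$ so that the $L^\infty$ bound via the 2D Sobolev embedding is available, which is exactly the asymmetric form already baked into Lemma \ref{prop0}. Assembling the three contributions completes the proof.
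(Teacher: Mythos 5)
Your proof is correct and is precisely the intended derivation: the paper only describes Lemma~\ref{prop1} as ``a special consequence of Lemma~\ref{prop0}'' without writing out the details, and your argument — splitting off the principal transport term via Leibniz, killing it by integration by parts and the embedding $H^2(\mathbb{T}^2)\hookrightarrow L^\infty$, and handling the two commutators $[\partial^{s_0},f_i]\partial_i g$ componentwise via Lemma~\ref{prop0} together with Cauchy--Schwarz — is the standard way to unpack that remark.
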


\vskip .3in
\section{Upper bounds for the basic energy functionals}
\label{basic}

This section establishes the desired \textit{a priori} upper bounds for the
basic energy $\mathcal{E}_0(t)$ and $\mathfrak{E}(t)$. Since these two energy functionals share
similar structures, we put their estimates together. The upper bounds are stated in the following
proposition. Without loss of generality, we assume that
\begin{equation}\label{assum}
	\mathfrak{E}(t) \leq 1,  \quad  \| \rho \|_{L^{\infty}} \leq \frac{1}{2}\quad \text{for} \quad  t \in [0,T].
\end{equation}
The proof of Theorem \ref{thm1} is based on the bootstrapping argument, so (\ref{assum}) can be regarded as part of the ansatz. In addition, since the initial data is small and the solution is also small
in $H^{s-1}$, (\ref{assum}) does not impose any extra constraint on the solution we are seeking.

\begin{proposition}\label{lemmaBasic}
Let $\mathcal{E}_{total}(t)$ be the total energy functional defined in the introduction, namely
\begin{equation}\label{total}
\begin{split}
  \mathcal{E}_{total}(t)\triangleq & \sum_{i=0,1,3} \mathcal{E}_i(t)+\sum_{i=1,3} \mathcal{P}_i(t)+\sum_{i=1,3} \mathcal{B}_i(t)+\sum_{i=1,3} \mathcal{F}_i(t)+\sum_{i=1,3} \mathcal{A}_i(t) \\
 & \qquad  +  \tilde{\mathcal{A}}(t)  +  \mathfrak{U}(t)+\mathfrak{E}(t).
\end{split}
\end{equation}
Then the following inequalities hold for all positive time $t$,
\begin{equation}\nonumber
   \mathcal{E}_0(t)  \lesssim  \; \mathcal{E}_0(0) +  \mathcal{E}_{total}^{\frac{3}{2}}(t), \qquad \mathfrak{E}(t)  \lesssim \; \mathcal{E}_0(0) +  \mathcal{E}_{total}^{\frac{3}{2}}(t).
\end{equation}
\end{proposition}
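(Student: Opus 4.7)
The plan is to derive, for both $\mathcal{E}_0$ and $\mathfrak{E}$, a single differential identity from the full system \eqref{mhd1} by applying $\partial^s$ (resp.\ $\partial^{s-1}$), pairing each equation against $\partial^s\rho,\partial^s u,\partial^s b$, and summing. Two linear cancellations must be exploited. First, writing $\frac{1}{\rho+1}\nabla P(\tilde\rho)=\nabla\rho+\bigl(\frac{P'(\tilde\rho)}{\rho+1}-1\bigr)\nabla\rho$, the leading $\nabla\rho$ in the velocity equation integrates by parts against $\partial^s\rho$ to cancel the $\int\partial^s\nabla\cdot u\,\partial^s\rho\,dx$ piece coming from the continuity equation. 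Second, the term $-\nabla^{\bot}\!\cdot b\,e_1$ in the $u$-equation cancels $-\nabla^{\bot}u_1$ in the $b$-equation after integration by parts. Together with the parabolic gain $-\int\Delta\partial^s u\cdot\partial^s u\,dx=\|\nabla\partial^s u\|_{L^2}^2$, this yields the coercive identity
\begin{equation*}
\tfrac12\tfrac{d}{dt}\bigl(\|\rho\|_{H^s}^2+\|u\|_{H^s}^2+\|b\|_{H^s}^2\bigr)+\|\nabla u\|_{H^s}^2=\mathcal{N}(t),
\end{equation*}
where $\mathcal{N}(t)$ collects all nonlinear inner products.

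Next I would install the time weight. For $\mathfrak{E}(t)$, the $H^{s-1}$ analogue is just integrated from $0$ to $t$. For $\mathcal{E}_0(t)$, multiplying the $H^s$ identity by $w_0(\tau)=(1+\tau)^{-\sigma}$ and using $w_0'=-\sigma w_{-1}$ produces
\begin{equation*}
\tfrac{d}{d\tau}(w_0 E_s)+\sigma w_{-1}E_s+w_0\|\nabla u\|_{H^s}^2=w_0\,\mathcal{N},
\end{equation*}
with $E_s:=\tfrac12(\|\rho\|_{H^s}^2+\|u\|_{H^s}^2+\|b\|_{H^s}^2)$, and integration in time automatically reproduces the three pieces comprising $\mathcal{E}_0$. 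All that remains is to bound $\int_0^t\mathcal{N}\,d\tau$ (for $\mathfrak{E}$) and $\int_0^t w_0\,\mathcal{N}\,d\tau$ (for $\mathcal{E}_0$) by $\mathcal{E}_{total}^{3/2}(t)$.

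The nonlinear term $\mathcal{N}$ gathers transport pieces $\int\partial^s(u\cdot\nabla X)\partial^s X\,dx$ for $X\in\{\rho,u,b\}$, compressible corrections $\tfrac{\rho}{\rho+1}\Delta u$, $\tfrac{\rho}{\rho+1}\nabla^{\bot}\!\cdot b\,e_1$, $\tfrac{1}{\rho+1}b\!\cdot\!\nabla b$, the pressure remainder $(\tfrac{P'(\tilde\rho)}{\rho+1}-1)\nabla\rho$, and the magnetic cross terms $b\cdot\nabla u-b\nabla\!\cdot\!u$. For the transport pieces I would invoke Lemma \ref{prop1} directly, or push the top derivative onto $X$, integrate by parts, and reduce the expression to $\|\nabla\!\cdot\!u\|_{H^2}\|X\|_{\dot H^s}^2$ plus commutator remainders handled by Lemma \ref{prop0}. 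The decisive allocation is that $\|\nabla\!\cdot\!u\|_{H^2}=\|\partial_1u_1+\partial_2u_2\|_{H^2}$ carries strong decay because $\partial_1u_1$ lives inside $\mathcal{F}_3$ and $\partial_2u_2\le\|\partial_2 u\|_{H^{s-3}}$ lives inside $\mathcal{E}_3$, so $\|\nabla\!\cdot\!u\|_{H^2}\lesssim w_3^{-1/2}\sqrt{\mathcal{E}_3+\mathcal{F}_3}$ and the factor $(1+\tau)^{-(3-\sigma)/2}$ is integrable since $\sigma<\tfrac12$. Lower-order vertical derivatives of $\rho$ and $b$ in $L^\infty$ receive analogous fast decay from $\mathcal{P}_3$, $\mathcal{B}_3$, $\mathcal{E}_3$. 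Every nonlinear inner product is then arranged as the product of one fast-decaying factor, one $w_0^{-1/2}\sqrt{\mathcal{E}_0}$-factor, and, where available, a $\sqrt{\int w_0\|\nabla u\|_{H^s}^2}$-factor; Cauchy-Schwarz together with the monotonicity of $\mathcal{E}_{total}$ in $t$ closes the estimate.

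The main obstacle will be the density and magnetic transport terms $\int w_0\int\partial^s(u\cdot\nabla\rho)\partial^s\rho$ and $\int w_0\int\partial^s(u\cdot\nabla b)\partial^s b$ at top order, since $\rho$ and $b$ carry no self-dissipation and nothing can be absorbed into the coercive side. The entire point of having introduced $\mathcal{E}_3,\mathcal{F}_3,\mathcal{P}_3,\mathcal{B}_3$ is precisely to equip $\nabla\!\cdot\!u$, $\partial_2\rho$ and $\partial_2 b$ with enough time decay that $(1+\tau)^{-(3-\sigma)/2}$ defeats the $w_0$ outer weight; any attempt to use $\mathcal{E}_0$-level decay alone fails by a wide margin. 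A secondary subtle point is the cubic term $\partial^s(b\nabla\!\cdot\!u)\cdot\partial^s b$ in the $b$-equation, which must be split via $\nabla\!\cdot\!u=\partial_1u_1+\partial_2u_2$ and each half paired with the matching fast-decaying functional before Cauchy-Schwarz is applied; once these delicate allocations are made, the remaining contributions to $\mathcal{N}$ are routine.
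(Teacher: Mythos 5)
Your proposal is correct and follows essentially the same route as the paper: an $H^{s}$/$H^{s-1}$ energy identity built on the two linear cancellations you identify, time weight $w_0$, anisotropic commutator estimates (Lemma~\ref{prop1}, equivalently the $[\partial^{s},u_i]\partial_i g$ split via Lemma~\ref{prop0}), and allocation of the needed time-integrability to $\nabla\cdot u$, $\partial_2\rho$, $\partial_2 b$ (and $b_1$ via Poincar\'e and $\int b\,dx=0$) through the $\mathcal{E}_k,\mathcal{F}_k,\mathcal{P}_k,\mathcal{B}_k$ hierarchy. The only cosmetic deviations are that the paper runs $k\in\{0,1\}$ in a single pass to treat $\mathcal{E}_0$ and $\mathfrak{E}$ together and tends to invoke the interpolated $\mathcal{E}_2,\mathcal{F}_2,\mathcal{P}_2,\mathcal{B}_2$ (via Cauchy--Schwarz against the dissipative time integrals) rather than the sup parts of $\mathcal{E}_3,\mathcal{F}_3$ directly, but your allocation $\|\nabla\cdot u\|_{H^2}\lesssim w_3^{-1/2}(\mathcal{E}_3^{1/2}+\mathcal{F}_3^{1/2})$ with exponent $-(3-\sigma)/2<-1$ closes just as well.
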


\begin{proof}
We start with the $L^2$ estimate. Taking the $L^2$-inner product (on $\mathbb{T}^2$) of $(\rho, \tilde{\rho}u, b)$ with the corresponding equations in \eqref{mhd1}, we obtain
\begin{equation}\label{I123}
\begin{split}
  \frac{1}{2} \frac{d}{dt} \Big( \|\sqrt{\tilde \rho} u\|_{L^2}^2 + \|\rho\|_{L^2}^2 + \|b\|_{L^2}\Big) + \|\nabla u\|_{L^2}^2  = \sum_{i = 1}^3 I_i,
\end{split}
\end{equation}
where
\begin{equation}\nonumber
  \begin{split}
I_1 = & -\int_{\mathbb{T}^2} (\nabla \cdot u ) \rho\; dx - \int_{\mathbb{T}^2}  \nabla \rho\cdot
 u\; dx  +  \int_{\mathbb{T}^2}  \nabla^{\bot} \cdot b \; u_1  \\
& + \int_{\mathbb{T}^2}  \nabla^{\bot}  u_1  \cdot b \; dx  + \int_{\mathbb{T}^2} b \cdot \nabla b  \cdot u \; dx + \int_{\mathbb{T}^2} b \cdot \nabla u  \cdot b \; dx \\
& - \int_{\mathbb{T}^2} b \; \nabla \cdot u  \cdot b \; dx- \int_{\mathbb{T}^2} \frac{1}{2}(\nabla |b|^2)\cdot  u \; dx- \int_{\mathbb{T}^2}  u \cdot \nabla b \cdot b\; dx, \\
I_2 = & - \int_{\mathbb{T}^2}  u \cdot \nabla \rho  \; \rho \; dx
- \int_{\mathbb{T}^2} \rho \; \nabla \cdot u \; \rho \; dx, \qquad I_3 =  \int_{\mathbb{T}^2}  (\nabla \rho -  \nabla P) \cdot u \; dx.
\end{split}
\end{equation}
By integration by parts and $\na\cdot b=0$,
$$
I_1 =0.
$$
Integration by parts and H\"{o}lder's inequality yield
$$
 |I_2| \lesssim  \; \|\nabla \cdot u\|_{L^\infty} \|\rho\|_{L^2}^2.
$$
To bound $I_3$, we integrate by parts and set
\begin{equation*}
	q(\rho)\triangleq (P(\rho+1) -P(1))-\rho = \int_0^\rho \big(P'(r + 1) - 1 \big) \; dr.
\end{equation*}
to rewrite $I_3$ as
$$
I_3 = -\int_{\mathbb T^2} q(\rho) \na\cdot u\,dx.
$$
$q(\rho)$ is a smooth function of $\rho$ with  $q(0) = q'(0) = 0$ (due to $P'(1)=1$),
thus $q(\rho) \sim \rho^2$ for small $\rho$, say $\rho<1$. Therefore,
$$
    |I_3| \lesssim  \; \|q(\rho)\|_{L^1} \|\nabla \cdot u\|_{L^\infty}  \lesssim \; \|\nabla \cdot u\|_{L^\infty} \|\rho\|_{L^2}^2.
$$
Since $\tilde \rho$ perturbs around a non-trivial equilibrium, i.e., $\tilde \rho \sim 1$, we have $\|\sqrt{\tilde \rho} u\|_{L^2}^2 \sim \|u\|_{L^2}^2$.
Integrating \eqref{I123} in time on $[0, t]$ yields
\begin{equation}\label{i123}
\begin{split}
  \sup_{0\leq \tau \leq t} \big( \|u\|_{L^2}^2 + \|\rho\|_{L^2}^2 + \|b\|_{L^2}^2 \big) + &\int_0^t \|\nabla u\|_{L^2}^2 \; d\tau \lesssim  \;\sup_{0\leq \tau \leq t} w_0 \|\rho\|_{L^2}^2 \int_0^t  w_0^{-1} w_3^{-\frac{1}{2}} w_3^{\frac{1}{2}}   \|\nabla \cdot u\|_{L^\infty} \; d\tau \\
 &  \lesssim  \; \mathcal{E}_0 (0)  + \mathcal{E}_0 (t) \big(\mathcal{E}_3^\frac{1}{2} (t) + \mathcal{F}_3^\frac{1}{2} (t)\big).
\end{split}
\end{equation}
Here we have used the integrability of $w_0^{-2}(\tau) w_3^{-1}(\tau) = (1+\tau)^{3\sigma-3} \leq  (1+\tau)^{-\frac{3}{2}} $ for $\sigma < \frac{1}{2}$. This finishes the $L^2$ estimate for $\mathfrak{E}(t)$.

The $L^2$ estimate of $\mathcal{E}_0(t)$ follows from a similar process. Multiplying \eqref{I123}
by the time weight $w_0$ leads to
\begin{equation}\label{I1234}
\begin{split}
  \frac{1}{2} \frac{d}{dt} w_0 \Big( \|\sqrt{\tilde \rho} u\|_{L^2}^2 + \|\rho\|_{L^2}^2 &+ \|b\|_{L^2}^2 \Big) + \frac{\sigma}{2}w_{-1}\Big( \|\sqrt{\tilde \rho} u\|_{L^2}^2 + \|\rho\|_{L^2}^2 + \|b\|_{L^2}^2\Big) \\
    & + w_0 \|\nabla u\|_{L^2}^2  = \sum_{i = 1}^3 \widetilde{I}_i,
\end{split}
\end{equation}
where
$$ \widetilde{I}_1 = w_0 \;I_1, \qquad \widetilde{I}_2 = w_0\; I_2, \qquad \widetilde{I}_3= w_0 \;I_3. $$
Clearly $\widetilde{I}_1 =0$ since $I_1=0$. By the trivial fact that $|w_0| \leq 1$ and the previous bounds on $I_2$ and $I_3$,
\begin{equation}\nonumber
  \begin{split}
   |\widetilde{I}_2| \leq  |I_2| \lesssim& \; \|\nabla \cdot u\|_{L^\infty} \|\rho\|_{L^2}^2, \\
    |\widetilde{I}_3| \leq |I_3| \lesssim&  \; \|q(\rho)\|_{L^1} \|\nabla \cdot u\|_{L^\infty}  \lesssim \; \|\nabla \cdot u\|_{L^\infty} \|\rho\|_{L^2}^2.
  \end{split}
\end{equation}
Integrating \eqref{I1234} in time on $[0, t]$ yields
\begin{equation}\label{i1234}
\begin{split}
  \sup_{0\leq \tau \leq t} w_0 \big( \|u\|_{L^2}^2 + & \|\rho\|_{L^2}^2 + \|b\|_{L^2}\big) + \int_0^t w_{-1} \big( \|u\|_{L^2}^2 + \|\rho\|_{L^2}^2 + \|b\|_{L^2}\big) \; d\tau \\
  & + \int_0^t w_0 \|\nabla u\|_{L^2}^2 \; d\tau  \lesssim  \;\mathcal{E}_0 (0) + \mathcal{E}_0 (t) \big(\mathcal{E}_2^\frac{1}{2} (t) + \mathcal{F}_2^\frac{1}{2} (t)\big).
\end{split}
\end{equation}
Here we have used the approximation $\|\sqrt{\tilde \rho} u\|_{L^2}^2 \sim \|u\|_{L^2}^2$.  \eqref{i1234} is just the $L^2$ estimate of $\mathcal{E}_0(t)$.

\vskip .1in
Next we turn to the highest-order derivative parts in $\mathcal{E}_0(t)$ and $\mathfrak{E}(t)$.
Let $k=0,1$. Applying $\partial^{s-k}$ to the equations of $\rho$, $u$ and $b$ in \eqref{mhd1}, taking the $L^2$-inner product of the resulting equations with $(\partial^{s-k}\rho, \partial^{s-k}u, \partial^{s-k}b)$ and  multiplying by the time weight $w_0^{1-k}$, we obtain
\begin{equation}\label{I415}
\begin{split}
&\frac{1}{2} \frac{d}{dt} w_0^{1-k} \Big(\|\rho\|_{\dot H^{s-k}}^2 + \|u\|_{\dot H^{s-k}}^2 + \|b\|_{\dot H^{s-k}}^2 \Big) + w_0^{1-k}(t)\|\nabla u\|_{\dot H^{s-k}}^2 \\
&+\frac{(1-k)\sigma}{2}w_0^{-k} w_{-1} \Big(\|\rho\|_{\dot H^{s-k}}^2 + \|u\|_{\dot H^{s-k}}^2 + \|b\|_{\dot H^{s-k}}^2 \Big)   =  \sum_{i=4}^{15} I_i,
\end{split}
\end{equation}
where
\begin{align*}
I_4 = & \;  w_0^{1-k} \Big(- \int_{\mathbb{T}^2}  \partial^{s-k} \nabla \cdot u \partial^{s-k} \rho\; dx - \int_{\mathbb{T}^2} \partial^{s-k} \nabla \rho \cdot
\partial^{s-k} u\; dx \Big)\\
& +  w_0^{1-k} \Big( \int_{\mathbb{T}^2} \partial^{s-k} \nabla^{\bot} \cdot b \partial^{s-k} u_1 + \int_{\mathbb{T}^2} \partial^{s-k} \nabla^{\bot}  u_1\cdot \partial^{s-k} b \; dx\Big),\\
I_5 = & - w_0^{1-k}\int_{\mathbb{T}^2} \partial^{s-k} (u \cdot \nabla \rho) \partial^{s-k} \rho \; dx,
\\ I_6 =& - w_0^{1-k}\int_{\mathbb{T}^2} \partial^{s-k}(\rho \nabla \cdot u) \partial^{s-k} \rho \; dx, \\
I_7 = & - w_0^{1-k}\int_{\mathbb{T}^2} \partial^{s-k} (u \cdot \nabla u) \cdot\partial^{s-k} u \; dx,
\\ I_8 =&   - w_0^{1-k}\int_{\mathbb{T}^2} \partial^{s-k} \Big(\frac{\rho}{\rho + 1} \Delta u \Big)\cdot \partial^{s-k} u \; dx, \\
I_9 = & - w_0^{1-k}\int_{\mathbb{T}^2} \partial^{s-k} \Big(\frac{1}{\rho + 1} b \cdot \nabla b\Big) \cdot\partial^{s-k} u \; dx,\\
I_{10} =& - w_0^{1-k}\int_{\mathbb{T}^2} \partial^{s-k} \Big(\frac{\rho}{\rho + 1} \nabla^{\bot} \cdot b \Big) \partial^{s-k} u_1 \; dx, \\
I_{11} = & \; w_0^{1-k}\int_{\mathbb{T}^2} \partial^{s-k} \Big(\nabla \rho - \frac{1}{\rho + 1} \nabla P \Big) \cdot \partial^{s-k} u \; dx,\\
 I_{12} =&  - w_0^{1-k}\int_{\mathbb{T}^2} \partial^{s-k} (u \cdot \nabla b)\cdot \partial^{s-k} b\; dx, \\
I_{13} = & \; w_0^{1-k}\int_{\mathbb{T}^2} \partial^{s-k}(b \cdot \nabla u) \cdot\partial^{s-k} b \; dx,\\
 I_{14} =&  - w_0^{1-k}\int_{\mathbb{T}^2} \partial^{s-k}(b \nabla \cdot u) \cdot\partial^{s-k} b \; dx,  \\
I_{15} = & - w_0^{1-k}\int_{\mathbb{T}^2} \partial^{s-k} \Big(\frac{1}{2(\rho+1)} \nabla |b|^2 \Big) \cdot\partial^{s-k} u \; dx.
\end{align*}
By integration by parts,
\begin{equation} \label{I4}
I_4  = 0.
\end{equation}
To efficiently estimate the terms $I_5$ through $I_{14}$ and avoid repetitions, we divide these
terms into three types: transport terms, terms containing $\na\cdot u$ and other terms.

\quad \\
\noindent
{\bf Type I: Transport terms containing $u\cdot \nabla$ or $b \cdot \nabla $ }
~\\~

This part contains $I_5, I_7, I_9, I_{12}$ and $I_{13}$. We shall first handle the terms containing $u\cdot \nabla$ operator. By Lemma \ref{prop1},  for $g = u, b, \rho$,
\begin{align*}
&\int_0^t |I_5| + |I_7| + |I_{12}| \; d \tau \\
 \lesssim & \; \sup_{0\leq \tau \leq t} w_0^{1-k} \|g\|_{ \dot H^{s-k}}^2 \int_0^t
\|\nabla u_1\|_{H^{[\frac{s-k}{2}] + 1}} \; d \tau \\
&+ \sup_{0\leq \tau \leq t} w_0^\frac{1-k}{2} \|g\|_{\dot H^{s-k}} \int_0^t \|\nabla u_2\|_{H^{[\frac{s-k}{2}] + 1}} w_0^\frac{1-k}{2} \|\partial_2g\|_{H^{s-k-1}} \; d\tau  \\
&+ \sup_{0\leq \tau \leq t} w_0^{1-k} \|g\|_{H^{s-k}}^2 \int_0^t  \big( \|\nabla u_1\|_{H^{s-k-1}} + \| \nabla \cdot u \|_{H^2}   \big)   \; d\tau  \\
&+  \sup_{0\leq \tau \leq t} w_0^{1-k} \|u\|_{H^{s-k}} \|g\|_{H^{s-k}} \int_0^t \| \partial_2 g\|_{H^{[\frac{s-k}{2}] + 2}} \; d\tau .
\end{align*}
Notice that the average integral of $\partial_2 g $ equals zero, the Poinc\'{a}re's inequality directly implies $\| \partial_2 g\|_{H^{[\frac{s-k}{2}] + 2}} \leq  \| \partial_2^2 g\|_{H^{s-3}}$ since $ s \ge 10$.
For the case $k = 0$, we have the following estimate
\begin{align}
& \int_0^t  |I_5| + |I_7| + |I_{12}|\; d \tau \notag\\
\lesssim & \; \sup_{0\leq \tau \leq t} w_0   \|g\|_{\dot H^{s}}^2 \int_0^t
\|\partial_2 u_1\|_{H^{s-1}} + \|\partial_1 u_1\|_{H^{s-1}}\; d \tau \notag\\
&+ \sup_{0\leq \tau \leq t} w_0^\frac{1}{2} \|g\|_{\dot H^s} \int_0^t \|\nabla u_2\|_{H^{s-1}} w_{-1}^\frac{1}{4} \|g\|_{H^s} ^\frac{1}{2} w_1^\frac{1}{4} \|\partial_2^2 g\|_{H^{s-2}}^\frac{1}{2}\; d\tau  \notag\\
&+ \sup_{0\leq \tau \leq t} w_0 \|g\|_{H^s}^2 \int_0^t  \big( \|\partial_1 u_1\|_{H^{s-1}} + \| \partial_2 u \|_{H^{s-1}}   \big)   \; d\tau  \notag\\
&+  \sup_{0\leq \tau \leq t} w_0 \|u\|_{H^s} \|g\|_{H^s} \int_0^t \| \partial_2^2 g\|_{H^{s-3}} \; d\tau \notag\\
\lesssim &  \; \mathcal{E}_0 (t) \big(\mathcal{E}_2^\frac{1}{2} (t)+ \mathcal{F}_2^\frac{1}{2}(t)\big)
 + \; \mathfrak{E}^\frac{1}{2} (t)\mathcal{E}_0^\frac{1}{2}(t)\big( \mathcal{E}_0^\frac{1}{4} (t) \mathcal{E}_1^\frac{1}{4} (t)+\mathcal{E}_0^\frac{1}{4}(t)\mathcal{B}_1^\frac{1}{4}(t)+ \mathcal{E}_0^\frac{1}{4}(t)\mathcal{P}_1^\frac{1}{4}(t)      \big) \notag\\
& + \mathcal{E}_0 (t)\big( \mathcal{E}_2^\frac{1}{2} (t)+ \mathcal{P}_2^\frac{1}{2} (t)+  \mathcal{B}_2^\frac{1}{2} (t)\big).\label{0I5712}
\end{align}
Where we have used the facts: $w_0 = w_{-1}^\frac{1}{2}w_1^\frac{1}{2}$ and $ \|\partial_2 g\|_{H^{s-1}} \lesssim \|\partial_2^2 g\|_{H^{s-2}}^\frac{1}{2} \|g\|_{H^s}^\frac{1}{2}$.
For the case $k = 1$, similarly, the time integral is bounded by
\begin{equation}\label{1I5712}
\begin{split}
& \int_0^t |I_5| + |I_7| + |I_{12}|  \; d \tau \\
\lesssim & \; \sup_{0\leq \tau \leq t} \| g\|_{\dot H^{s-1}}^2 \int_0^t \| \nabla u_1 \|_{H^{[\frac{s-1}{2}]+1}} \; d \tau  \\
& + \sup_{0\leq \tau \leq t} \| g\|_{\dot H^{s-1}} \int_0^t \| \nabla u_2 \|_{H^{[\frac{s-1}{2}]+1}}  \| \partial_2 g\|_{H^{s-2}} \; d \tau \\
& + \sup_{0\leq \tau \leq t} \| g\|_{H^{s-1}}^2 \int_0^t \big( \| \nabla u_1 \|_{H^{s-2}} + \| \nabla\cdot u\|_{H^2} \big)\; d \tau \\
&+ \sup_{0\leq \tau \leq t}\| u\|_{H^{s-1}} \|g\|_{H^{s-1}} \int_0^t \| \partial_2 g \|_{H^{[\frac{s-1}{2}]+2}} \; d \tau  \\
\lesssim & \;  \mathfrak{E} (t)\big( \mathcal{E}_2^\frac{1}{2}(t)+ \mathcal{F}_2^\frac{1}{2}(t)  \big) +\mathfrak{E}^\frac{1}{2} (t)\mathcal{E}_0^\frac{1}{2}(t)\big( \mathcal{E}_0^\frac{1}{4} (t) \mathcal{E}_1^\frac{1}{4} (t)+\mathcal{E}_0^\frac{1}{4}(t)\mathcal{B}_1^\frac{1}{4}(t)+ \mathcal{E}_0^\frac{1}{4}(t)\mathcal{P}_1^\frac{1}{4}(t)      \big) \\
 & + \mathfrak{E}(t)\big(\mathcal{E}_2^\frac{1}{2}(t)+ \mathcal{P}_2^\frac{1}{2} (t)+ \mathcal{B}_2^\frac{1}{2}(t)\big).
\end{split}
\end{equation}
The same strategy can be used for the terms containing $ b\cdot \nabla$ operator such as $I_9$. By direct calculations we can derive,
\begin{equation}\nonumber
\begin{split}
| I_9 | \lesssim & \; w_0^{1-k}\Big( \Big\|\frac{1}{\rho+1} \Big\|_{L^\infty} \big\|b\cdot \nabla b \big\|_{H^{s-k-1}} + \Big\|\frac{1}{\rho+1}\Big\|_{H^{s-k-1}} \big\|b \cdot \nabla b\big\|_{L^\infty}\Big) \|\nabla u\|_{H^{s-k}} \\
\lesssim & \; w_0^{1-k}\big(\|b_1\|_{L^\infty} \|\partial_1 b\|_{H^{s-k-1}} + \|b_1\|_{H^{s-k-1}} \|\partial_1 b\|_{L^\infty}\big)\|\nabla u\|_{H^{s-k}} \\
& +
w_0^{1-k}\big(\|b_2\|_{L^\infty} \|\partial_2 b\|_{H^{s-k-1}} + \|b_2\|_{H^{s-k-1}} \|\partial_2 b\|_{L^\infty}\big) \|\nabla u\|_{H^{s-k}} \\
& + w_0^{1-k}\big(1 + \|\rho\|_{H^{s-k-1}}\big) \big(\|b_1\|_{L^\infty} \|\partial_1 b\|_{L^\infty} + \|b_2\|_{L^\infty} \|\partial_2 b\|_{L^\infty}\big) \|\nabla u\|_{H^{s-k}}.
\end{split}
\end{equation}
Because $\int_{\mathbb{T}^2} b_0(x) \; dx = 0$, we have
$$
\int_{\mathbb{T}^2} b(x,t) \; dx = 0
$$
and thus we can apply Poinc\'{a}re's inequality on the periodic domain $\mathbb{T}^2$ to obtain
$$ \|b_1\|_{H^{s_0}} \lesssim \|\partial_2 b\|_{H^{s_0 - 1}},\quad  s_0 \in \mathbb{N}^+ .$$
When $k = 0$, we still invoke the interpolation inequalities
$$
 w_0(t) = w_{-1}^{\frac{1}{2}}(t) w_{1}^{\frac{1}{2}}(t), \qquad \|\partial_2 b\|_{H^{s-1}} \lesssim \|b\|_{H^s}^\frac{1}{2} \|\partial_2^2 b\|_{H^{s-2}}^\frac{1}{2}
$$
to obtain
\begin{equation}\label{0I9}
  \begin{split}
     \int_0^t   |I_9| \; d\tau
    \lesssim & \sup_{0\leq \tau \leq t} w_0^\frac{1}{2} \|b\|_{H^{s}} \int_0^t w_0^\frac{1}{2} \|\nabla u\|_{H^s} \|b_1\|_{L^\infty} \; dt \\
    & + \sup_{0\leq \tau \leq t}    \big(1+\|\rho\|_{H^{s-1}} \big)\| b\|_{H^{s-1}}\\
    &\qquad \times  \int_0^t w_{-1}^\frac{1}{4}\|b\|^\frac{1}{2}_{H^{s}}  w_1^\frac{1}{4} \|\partial_2^2 b\|^\frac{1}{2}_{H^{s-2}}  w_{0}^\frac{1}{2}  \| \nabla u\|_{H^s}  \; d\tau \\
    \lesssim&  \; \mathcal{E}_0(t) \mathcal{B}_2^\frac{1}{2}(t) + \mathfrak{E}^\frac{1}{2}(t) \mathcal{B}_1^\frac{1}{4} (t)\mathcal{E}_0^\frac{3}{4}(t).
  \end{split}
\end{equation}
Here we have used \eqref{assum}.
Similarly, for $k = 1$, the time integral of $|I_9|$ is bounded by
\begin{equation}\label{1I9}
  \begin{split}
     \int_0^t |I_9| \; d\tau
    \lesssim   \sup_{0\leq \tau \leq t} \big(1+\|\rho\|_{H^{s-2}}\big)  \|b\|_{H^{s-1}} & \int_0^t w_{2}^\frac{1}{4}\|\partial_2^2 b\|^\frac{1}{2}_{H^{s-4}} w_{-1}^\frac{1}{4}\|b\|^\frac{1}{2}_{H^{s}} \| \nabla u\|_{H^{s-1}} \; d\tau \\
    & \lesssim   \; \mathfrak{E}(t)  \mathcal{B}_2^\frac{1}{4} (t)\mathcal{E}_0^\frac{1}{4}(t).
  \end{split}
\end{equation}
Here we have used the facts that $ w_{-1}(\tau)w_2(\tau)= (1+\tau)^{ 1-2\sigma }>1$ for $0< \sigma <\frac{1}{2}$ and $\|\partial_2 b\|_{H^{s-2}} \lesssim \|\partial_2^2 b\|_{H^{s-4}}^\frac{1}{2} \|b\|_{H^s}^\frac{1}{2}$.

\vskip .1in
The next term $I_{13}$ also contains operator $b\cdot\nabla $. Following the estimates in Lemma \ref{prop1}, we have
\begin{align*}
|I_{13}|
\lesssim & \; w_0^{1-k}
\big(\|\nabla b_1\|_{H^{[\frac{s-k}{2}] + 1}} \|\partial_1 u\|_{H^{s-k-1}} +\|\nabla b_2\|_{H^{[\frac{s-k}{2}] + 1}} \|\partial_2 u\|_{H^{s-k-1}} \big)\|b\|_{\dot H^{s-k}} \\
&+w_0^{1-k} \big(\|\nabla b_1\|_{H^{s-k-1}} \| \partial_1 u\|_{H^{[\frac{s-k}{2}] + 2}}+ \|\nabla b_2\|_{H^{s-k-1}} \| \partial_2 u\|_{H^{[\frac{s-k}{2}] + 2}}\big) \|b\|_{\dot H^{s-k}} \\
& + w_0^{1-k}  \big(\|b_1\|_{L^\infty}  \|\partial_1 u\|_{\dot H^{s-k}}  + \|b_2\|_{L^\infty} \|\partial_2 u\|_{\dot H^{s-k}}\big)\|b\|_{\dot H^{s-k}}.
\end{align*}
Like before, for the case $k = 0$,  we can bound its time integral as follows,
\begin{align}
\int_0^t |I_{13}| \; d \tau  \lesssim& \sup_{0\leq \tau \leq t} w_0 \|u\|_{ H^s}\|b\|_{ H^{s}} \int_0^t
\|\nabla b_1\|_{H^{[\frac{s}{2}] + 1}} \; d \tau \notag \\
&+ \sup_{0\leq \tau \leq t} w_0 \|b\|_{ H^{s}}^2 \int_0^t \|\partial_2 u\|_{H^{s-1}} \; d\tau \notag\\
&+ \sup_{0\leq \tau \leq t} w_0^\frac{1}{2} \|b\|_{H^s} \int_0^t w_0^\frac{1}{2}\|\nabla b_1\|_{H^{s-1}} \| \partial_1 u\|_{H^{[\frac{s}{2}] + 2}} \; d\tau\notag \\
&+  \sup_{0\leq \tau \leq t} w_0  \|b\|_{H^s}^2  \int_0^t \| \partial_2 u\|_{H^{[\frac{s}{2}] + 2}} \; d\tau\notag\\
&+ \sup_{0\leq \tau \leq t} w_0^\frac{1}{2} \|b\|_{H^s} \int_0^t w_0^\frac{1}{2} \|\nabla u\|_{H^s} \|b_1\|_{L^\infty} \; d\tau\notag\\
& + \sup_{0\leq \tau \leq t} \|b_2\|_{L^\infty} \int_0^t w_1^\frac{1}{2} \|\partial_2 u\|_{\dot H^s} w_{-1}^\frac{1}{2} \|b\|_{\dot H^s} \; d\tau \notag\\
\lesssim& \; \mathcal{E}_0 (t)\mathcal{B}_2^\frac{1}{2} (t)+ \mathcal{E}_0(t) \mathcal{E}_2^\frac{1}{2}(t) + \mathfrak{E}^\frac{1}{2}(t) \mathcal{B}_1^\frac{1}{4} (t) \mathcal{E}_0^\frac{3}{4}(t)\notag\\
& + \mathcal{E}_0(t) \mathcal{B}_1^\frac{1}{2}(t)+\mathfrak{E}^\frac{1}{2}(t) \mathcal{E}_1^\frac{1}{2}(t) \mathcal{E}_0^\frac{1}{2}(t).\label{0I13}
\end{align}
Similarly, for $k = 1$, we have
\begin{align}
\int_0^t |I_{13}| \; d \tau  \lesssim& \sup_{0\leq \tau \leq t}  \|u\|_{ H^{s-1}}\|b\|_{ H^{s-1}} \int_0^t
\|\nabla b_1\|_{H^{[\frac{s-1}{2}] + 1}} \; d \tau \notag \\
&+ \sup_{0\leq \tau \leq t}  \|b\|_{ H^{s-1}}^2  \int_0^t \|\partial_2 u\|_{H^{s-2}} \; d\tau \notag\\
&+ \sup_{0\leq \tau \leq t} \|b\|_{H^{s-1}} \int_0^t  \|\nabla b_1\|_{H^{s-2}} \| \partial_1 u\|_{H^{[\frac{s-1}{2}] + 2}} \; d\tau\notag \\
&+  \sup_{0\leq \tau \leq t} \|b\|_{H^{s-1}}^2  \int_0^t  \| \partial_2 u\|_{H^{[\frac{s-1}{2}] + 2}} \; d\tau\notag\\
&+ \sup_{0\leq \tau \leq t}  \|b\|_{H^{s-1}} \int_0^t  \|\nabla u\|_{H^{s-1}} \|b_1\|_{L^\infty} \; d\tau\notag\\
& + \sup_{0\leq \tau \leq t} \|b_2\|_{L^\infty} \|b\|_{\dot H^{s-1}} \int_0^t   \|\partial_2 u\|_{\dot H^{s-1}}   \; d\tau \notag\\
\lesssim& \; \mathfrak{E}(t) \mathcal{B}_2^\frac{1}{2} (t)+ \mathfrak{E} (t)\mathcal{E}_2^\frac{1}{2}(t) + \mathfrak{E} (t) \mathcal{B}_1^\frac{1}{2}(t)+
\mathfrak{E}^\frac{1}{2} (t) \mathcal{E}_1^\frac{1}{2}(t)  \mathcal{B}_1^\frac{1}{2}(t).    \label{1I13}
\end{align}
We have finished the estimates for all terms containing $u\cdot\nabla$ or $b\cdot\nabla$ by now.

~\\~
\noindent
{\bf Type II: Terms containing the divergence  of velocity $\nabla \cdot u$}
~\\~

This part contains three terms $I_6, I_{11}$ and $I_{14}$.
$I_{11}$ will contain the divergence of velocity $\nabla\cdot u$ after integration by parts.
Since $I_6$ and $I_{14}$ have similar structures, they can be treated simultaneously. By H\"{o}lder's inequality,
\begin{equation}\nonumber
  \begin{split}
    |I_6| + |I_{14}| \lesssim & \;w_0^{1-k} \Big( \big\|(b| \rho)\big\|_{L^\infty} \|\nabla \cdot u\|_{H^{s-k}} + \big\|(b| \rho)\big\|_{H^{s-k}} \|\nabla \cdot u\|_{L^\infty} \Big) \big\|(b| \rho) \big\|_{H^{s-k}}.
  \end{split}
\end{equation}
Here we have used the notation $ (b|\rho) $ to stand for $b$ and $\rho$.
For $k = 0$,  we have
\begin{equation}\label{0I614}
\begin{split}
 \int_0^t |I_6| + |I_{14}| \; d\tau \lesssim & \sup_{0\leq \tau \leq t}  w_0  \big\|(b|\rho)\big\|_{H^s}^2 \int_0^t \|\nabla \cdot u\|_{H^2} \; d\tau \\
& +\sup_{0\leq \tau \leq t}  \big\|(b | \rho)\big\|_{H^2} \int_0^t w_1^\frac{1}{2} \|\nabla \cdot u\|_{H^s}w_{-1}^\frac{1}{2} \big\|(b| \rho)\big\|_{H^s} \; d\tau \\
 \lesssim  & \; \mathcal{E}_0(t) \big(\mathcal{E}_2^\frac{1}{2} (t)+ \mathcal{F}_2^\frac{1}{2}(t)\big)+ \mathfrak{E}^\frac{1}{2} (t)\big(\mathcal{E}_1^\frac{1}{2}(t) + \mathcal{F}_1^\frac{1}{2}(t)\big) \mathcal{E}_0^\frac{1}{2}(t)
.
\end{split}
\end{equation}
For $k = 1$,
\begin{equation}\label{1I614}
\begin{split}
 \int_0^t |I_6| + |I_{14}| \; d\tau \lesssim & \sup_{0\leq \tau \leq t}  \big\|(b|\rho)\big\|_{H^{s-1}}^2 \int_0^t \|\nabla \cdot u\|_{H^2} \; d\tau \\
& +\sup_{0\leq \tau \leq t}  \big\|(b | \rho)\big\|_{H^2} \big\|(b| \rho)\big\|_{H^{s-1}}  \int_0^t  \|\nabla \cdot u\|_{H^{s-1}}   \; d\tau \\
 \lesssim  & \; \mathfrak{E} (t)\big(\mathcal{E}_2^\frac{1}{2}(t) + \mathcal{F}_2^\frac{1}{2} (t)\big).
\end{split}
\end{equation}
$I_{11}$ can be written as
\begin{equation}\nonumber
\begin{split}
I_{11} = & - w_0^{1-k}\int_{\mathbb{T}^2} \partial^{s-k} \Big( \Big(\frac{P'(\tilde \rho)}{\rho+1} - 1 \Big) \nabla \rho \Big) \cdot \partial^{s-k} u \; dx\\
= & - w_0^{1-k}\int_{\mathbb{T}^2} \partial^{s-k} \nabla q_1(\rho) \cdot \partial^{s-k} u \; dx \\
= & \;  w_0^{1-k}\int_{\mathbb{T}^2} \partial^{s-k} q_1(\rho)  \partial^{s-k} \nabla \cdot u \; dx,
\end{split}
\end{equation}
where
$$
q_1(\rho) = \int_0^\rho \left(\frac{P'(r+1)}{r+1} - 1 \right)d r .
$$
Clearly, because $P'(1)=1$, $q_1(0) = q_1'(0)=0$  and we shall derive  $q_1(\rho) \sim \rho^2$. Therefore,
$$
|I_{11}| \lesssim w_0^{1-k}\|q(\rho)\|_{H^{s-k}} \|\nabla \cdot u\|_{H^{s-k}}
$$
and, for $k=0$,
\begin{equation}\label{0I11}
  \begin{split}
    \int_0^t |I_{11}| \; d\tau \lesssim   & \; \sup_{0\leq \tau \leq t}  \|\rho\|_{H^{s-1}} \int_0^t w_{-1}^\frac{1}{2} \|\rho\|_{H^s} w_1^\frac{1}{2}\|\nabla \cdot u\|_{H^s} \; d\tau \\
    \lesssim  & \; \mathfrak{E}^\frac{1}{2}(t) \mathcal{E}_0^\frac{1}{2}(t)\big(\mathcal{E}_1^\frac{1}{2}(t) + \mathcal{F}_1^\frac{1}{2}(t)\big).
  \end{split}
\end{equation}
For $k = 1$,
\begin{equation}\label{1I11}
  \begin{split}
    \int_0^t |I_{11}| \; d\tau \lesssim  \sup_{0\leq \tau \leq t}  \|\rho\|_{H^{s-1}}^2 \int_0^t \|\nabla \cdot u\|_{H^{s-1}} \; d\tau
    \lesssim  \; \mathfrak{E}(t)(\mathcal{E}_2^\frac{1}{2}(t) + \mathcal{F}_2^\frac{1}{2}(t)).
  \end{split}
\end{equation}
This completes the estimates for $|I_{11}|$ and also for all terms of Type II.

\quad \\
\noindent
{\bf Type III: Basic good terms}
~\\~

The remaining terms $I_8, I_{10}, I_{15}$ are referred to as basic good terms and are
handled in this part. First, $I_8$ can be bounded as follows.
\begin{equation}\nonumber
\begin{split}
  |I_8| =  &\;w_0^{1-k} \Big|\int_{\mathbb{T}^2} \partial^{s-k -1} \Big(\frac{\rho}{\rho + 1} \Delta u \Big)\cdot \partial^{s-k +1} u \; dx \Big| \\
  \lesssim & \;w_0^{1-k} \big( \Big\|\frac{\rho}{\rho + 1} \Big\|_{L^\infty} \|\Delta u\|_{H^{s-k -1}} + \Big\|\frac{\rho}{\rho + 1}\Big\|_{H^{s-k-1}} \|\Delta u\|_{L^\infty} \big) \|\nabla u\|_{H^{s-k}} \\
  \lesssim & \;\|\rho\|_{H^{s-k - 1}} w_0^{1-k} \|\nabla u\|_{H^{s-k}}^2 + \|\rho\|_{H^{s-k}} w_0^\frac{1-k}{2}  \|\nabla u\|_{H^{s-k-1}} w_0^\frac{1-k}{2} \|\nabla u\|_{H^{s-k}}.
\end{split}
\end{equation}
For $k = 0$,
\begin{equation}\label{0I8}
  \int_0^t |I_8| \;d\tau \lesssim \mathcal{E}_0(t) \mathfrak{E}^\frac{1}{2}(t)
\end{equation}
and for $k = 1$,
\begin{equation}\label{1I8}
  \int_0^t |I_8| \;d\tau \lesssim \mathfrak{E}^\frac{3}{2}(t).
\end{equation}
Similarly,
\begin{equation}\nonumber
\begin{split}
|I_{10}| = & \; w_0^{1-k} \Big| \int_{\mathbb{T}^2} \partial^{s-k-1} \Big(\frac{\rho}{\rho + 1} \nabla^{\bot} \cdot b \Big) \partial^{s-k+1} u_1 \; dx \Big| \\
\lesssim & \; w_0^{1-k}\|\rho\|_{H^{s-1}} \|b\|_{H^{s-k}}  \|\nabla  u_1\|_{H^{s-k}}.
\end{split}
\end{equation}
Therefore, for $k = 0$, the time integral of $|I_{10}|$ is bounded by
\begin{equation}\label{0I10}
\begin{split}
\int_0^t |I_{10}| \; d\tau \lesssim  & \;\sup_{0\leq \tau \leq t} \|\rho\|_{H^{s-1}} \int_0^t w_{-1}^\frac{1}{2}\|b\|_{H^s} w_1^\frac{1}{2}
\|\nabla  u_1\|_{H^{s}} \; d\tau \\
\lesssim  & \; \mathfrak{E}^\frac{1}{2}(t) \mathcal{E}_0^\frac{1}{2}(t) \big(\mathcal{E}_1^\frac{1}{2}(t) + \mathcal{F}_1^\frac{1}{2} (t)\big).
\end{split}
\end{equation}
For $k = 1$,
\begin{equation}\label{1I10}
\begin{split}
\int_0^t |I_{10}| \; d\tau \lesssim  \;\sup_{0\leq \tau \leq t} \|\rho\|_{H^{s-1}} \|b\|_{H^{s-1}}\int_0^t
\|\nabla  u_1\|_{H^{s-1}} \; d\tau
\lesssim \; \mathfrak{E}(t)  \big(\mathcal{E}_2^\frac{1}{2}(t) + \mathcal{F}_2^\frac{1}{2}(t)\big).
\end{split}
\end{equation}

Finally we deal with $I_{15}$.  To distinguish the horizontal derivative from the vertical one,
we write
\begin{align*}
\partial^{s-k-1}\Big(\frac{1}{2(\rho+1)} \nabla |b|^2 \Big) \cdot\partial^{s-k+1} u
& =\partial^{s-k-1}\Big(\frac{1}{(\rho+1)} b\cdot \p_1 b \Big) \cdot\partial^{s-k+1} u_1\\
&\quad + \partial^{s-k-1}\Big(\frac{1}{(\rho+1)} b\cdot \p_2 b \Big) \cdot\partial^{s-k+1} u_2.
\end{align*}
By Sobolev's inequality,
\begin{equation}\nonumber
\begin{split}
|I_{15}|
=  & \; w_0^{1-k} \Big| \int_{\mathbb{T}^2} \partial^{s-k-1}\Big(\frac{1}{2(\rho+1)} \nabla |b|^2 \Big) \cdot\partial^{s-k+1} u \; dx   \Big|  \\
\lesssim & \; w_0^{1-k}(1+ \|\rho\|_{H^{s-k-1}})\,\big(\|b\|_{H^{s-k-2}} \|b\|_{H^{s-k}})\\
&\qquad\times \big(\|\nabla \partial_1 u_1\|_{H^{s-k-1}} +
\|\nabla \partial_2 u_1\|_{H^{s-k-1}}\big) \\
& + w_0^{1-k} (1+ \|\rho\|_{H^{s-k-1}})\,\|\nabla u\|_{H^{s-k}}\\
&\qquad \times \big(\|\partial_2 b\|_{H^{s-k-1}} \|b\|_{H^{s-k-2}} + \|b\|_{H^{s-k-1}} \|\partial_2 b\|_{H^{s-k-2}}\big).
\end{split}
\end{equation}
For $k = 0$, we can bound the time integral by
\begin{equation}\label{0I15}
  \begin{split}
    \int_0^t |I_{15}| \; d\tau \lesssim & \sup_{0\leq \tau \leq t} \|b\|_{H^{s-2}} (1+ \|\rho\|_{H^{s-1}})\\
    &\quad \times \int_0^t w_{-1}^\frac{1}{2} \|b\|_{H^{s}}w_1^\frac{1}{2}\big(\|\nabla \partial_1 u_1\|_{H^{s-1}} +
\|\nabla \partial_2 u_1\|_{H^{s-1}}\big) \; d\tau \\
& + \sup_{0\leq \tau \leq t} \|b\|_{H^{s-1}} (1+ \|\rho\|_{H^{s-1}})\int_0^t w_0^\frac{1}{2} \|\partial_2 b\|_{H^{s-1}} w_0^\frac{1}{2} \|\nabla u\|_{H^{s}} \; d\tau \\
\lesssim & \; \mathfrak{E}^\frac{1}{2}(t) \mathcal{E}_0^\frac{1}{2}(t) \big(\mathcal{E}_1^\frac{1}{2}(t) + \mathcal{F}_1^\frac{1}{2}(t)\big) + \mathfrak{E}^\frac{1}{2}(t)\mathcal{E}_0^\frac{3}{4}(t)\mathcal{B}_1^\frac{1}{4}(t).
  \end{split}
\end{equation}
For $k = 1$,
\begin{equation}\label{1I15}
  \begin{split}
    \int_0^t |I_{15}|\; d\tau \lesssim & \sup_{0\leq \tau \leq t}   \|b\|_{H^{s-1}}^2 (1+ \|\rho\|_{H^{s-2}})\\
    &\quad \times  \int_0^t \big(\|\nabla \partial_1 u_1\|_{H^{s-2}} +
\|\nabla \partial_2 u_1\|_{H^{s-2}}\big) \; d\tau \\
& + \sup_{0\leq \tau \leq t}  \|b\|_{H^{s-2}} (1+ \|\rho\|_{H^{s-2}}) \int_0^t \|\partial_2 b\|_{H^{s-2}} \|\nabla u\|_{H^{s-1}} \; d\tau \\
\lesssim & \; \mathfrak{E}(t)   \big(\mathcal{E}_2^\frac{1}{2}(t) + \mathcal{F}_2^\frac{1}{2}(t)  \big) + \mathfrak{E}(t) \mathcal{B}_1^\frac{1}{2}(t).
  \end{split}
\end{equation}
This completes the estimates for all terms of Type III.

\vskip .1in
In the case when $k=0$, we integrate \eqref{I415} in time and take into account of the upper bounds \eqref{I4}, \eqref{0I5712}, \eqref{0I9}, \eqref{0I13}, \eqref{0I614}, \eqref{0I11}, \eqref{0I8},  \eqref{0I10} and \eqref{0I15} to obtain the desired bound for the highest-order energy in $\mathcal{E}_0(t)$. Combining it with the $L^2$ estimate \eqref{i1234},  we conclude that
$$ \mathcal{E}_0(t)  \lesssim \; \mathcal E_0(0) +  \mathcal{E}_{total}^{\frac{3}{2}}(t). $$
Here we have bounded the intermediate energies in terms of $\mathcal{E}_{total}$ through interpolation.

\vskip .1in
Similarly, when $k=1$, we integrate \eqref{I415} in time and invoke the upper bounds in \eqref{I4}, \eqref{1I5712}, \eqref{1I9}, \eqref{1I13}, \eqref{1I614}, \eqref{1I11}, \eqref{1I8},  \eqref{1I10} and \eqref{1I15} to achieve the bound for the highest order energy part in $\mathfrak{E}(t)$.
Combining it with the $L^2$ estimate \eqref{i123} leads to
$$ \mathfrak{E}(t)  \lesssim \; \mathcal E_0(0) +  \mathcal{E}_{total}^{\frac{3}{2}}(t). $$
This completes the proof of Proposition \ref{lemmaBasic}.
\end{proof}

\vskip .3in
\section{Strong dissipation in vertical derivative}
\label{Strong}

This section presents the upper bounds for $\mathcal{E}_k(t)$,  $\mathcal{P}_k(t)$ and  $\mathcal{B}_k(t)$, the functionals involving the vertical derivatives. There estimates are
stated in the following three propositions.  The first one focuses on $\mathcal{E}_1(t)$ and
 $\mathcal{E}_3(t)$.

\begin{proposition}\label{strongDis}
	Let $\mathcal{E}_{total}(t)$ be the total energy defined in \eqref{total}. Then, for any $t>0$, 	
\begin{equation*}
\begin{split}
\mathcal{E}_1(t) \lesssim \; &  \mathcal{E}_1(0) + \mathcal{E}_0^{\frac{1}{2}} (t) \big( \mathcal{E}_1^{\frac{1}{2}}(t) + \mathcal{P}_1^{\frac{1}{2}}(t) +\mathcal{B}_1^{\frac{1}{2}}(t)  \big)+ \mathcal{E}^{\frac{3}{2}}_{total}(t), \\
\mathcal{E}_3(t) \lesssim \;&  \mathcal{E}_3(0) +  \big( \mathcal{E}_1^{\frac{1}{2}}(t) + \mathcal{P}_1^{\frac{1}{2}} (t)+\mathcal{B}_1^{\frac{1}{2}} (t) \big)\big( \mathcal{E}_3^{\frac{1}{2}}(t) + \mathcal{P}_3^{\frac{1}{2}}(t) +\mathcal{B}_3^{\frac{1}{2}} (t) \big) + \mathcal{E}^{\frac{3}{2}}_{total}(t).
\end{split}
\end{equation*}
\end{proposition}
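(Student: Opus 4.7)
My plan is to derive both inequalities via weighted $L^2$ energy estimates for $(\partial_2 \rho, \partial_2 u, \partial_2 b)$ at spatial order $s-k$, with $k=1$ for the first inequality and $k=3$ for the second, paralleling the argument of Proposition~\ref{lemmaBasic}. I apply $\partial^{s-k} \partial_2$ to each of the three equations in (1.2), take the $L^2(\mathbb{T}^2)$ inner product with $\partial^{s-k} \partial_2 \rho$, $\partial^{s-k} \partial_2 u$ and $\partial^{s-k} \partial_2 b$ respectively, and sum. The linear cross-coupling integrals involving $\nabla \rho$, $\nabla^{\bot} \cdot b$ and $\nabla^{\bot} u_1$ cancel by integration by parts exactly as in the $I_4 = 0$ step, producing
\[
\tfrac{1}{2} \tfrac{d}{dt}\Bigl(\|\partial_2 \rho\|_{\dot H^{s-k}}^2 + \|\partial_2 u\|_{\dot H^{s-k}}^2 + \|\partial_2 b\|_{\dot H^{s-k}}^2\Bigr) + \|\nabla \partial_2 u\|_{\dot H^{s-k}}^2 = \mathcal{N}_k,
\]
where $\mathcal{N}_k$ collects all nonlinear contributions. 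Lower-order derivatives (from $0$ up to $s-k$) are handled in the same way; multiplying by $w_k(\tau)$ and integrating on $[0,t]$ then produces $\mathcal{E}_k(t)$ on the LHS, modulo a weight-derivative remainder $\int_0^t \dot{w}_k \|\partial_2(\rho,u,b)\|_{H^{s-k}}^2 \, d\tau$.

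The weight-derivative remainder is absorbed as follows. The $u$-piece goes into the dissipation integral via $\|\partial_2 u\|_{H^{s-k}} \le \|\nabla u\|_{H^{s-k}}$. For $\rho$ and $b$ I use the mean-zero Poincar\'e inequalities on $\mathbb{T}^2$ (legitimate because $\int_{\mathbb T^2} \partial_2 \rho = 0$ and $\int_{\mathbb T^2} b = 0$) together with the div-curl estimate $\|\partial_2^2 b\|_{H^{s-k-2}} \lesssim \|\partial_2^2 \nabla^{\bot} \cdot b\|_{H^{s-k-3}}$ (which follows from $\nabla \cdot b = 0$ via Biot--Savart) to trade one vertical derivative for an extra $\partial_2$, accessing the stronger norms stored in $\mathcal{P}_k$ and $\mathcal{B}_k$. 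This is the source of the $\mathcal{P}_k^{1/2}$ and $\mathcal{B}_k^{1/2}$ factors on the right-hand side.

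The nonlinear bulk $\mathcal{N}_k$ is organized into the same three types as in the proof of Proposition~\ref{lemmaBasic}: transport pieces ($u\cdot\nabla$, $b\cdot\nabla$ acting on $\partial_2$-differentiated fields), $\nabla\cdot u$ pieces, and residual lower-order terms. They are controlled respectively by the anisotropic triple-product Lemma~\ref{prop1}, by H\"older combined with the commutator Lemma~\ref{prop0}, and by direct product estimates. The key anisotropic interpolation $\|\partial_2 g\|_{H^{s-k-1}} \lesssim \|g\|_{H^{s-k}}^{1/2} \|\partial_2^2 g\|_{H^{s-k-2}}^{1/2}$ for $g \in \{\rho,b\}$, combined with Cauchy--Schwarz in time under appropriate splits of the weight $w_k = (1+\tau)^{k-\sigma}$, converts each cubic integral either into $\mathcal{E}_{total}^{3/2}$ or into a product of two half-weighted energies in the form claimed.

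The main obstacle is this last step: both target bounds are bilinear in the weighted energies, so every cubic must be forced into the prescribed pairing via a careful weight-split/$\partial_2^2$-exchange. One factor has to absorb into the $\partial_2^2$-norm contained in $\mathcal{P}_k$ or $\mathcal{B}_k$ (or into the $u$-dissipation in $\mathcal{E}_k$), a second into the $\sup$-in-time part of some weighted energy, and the third into $\mathfrak{E}$ or $\mathcal{E}_0$; for the $k=3$ estimate the third factor instead comes from $\mathcal{E}_1 + \mathcal{P}_1 + \mathcal{B}_1$, which is how the bilinear pairing $(\mathcal{E}_1+\mathcal{P}_1+\mathcal{B}_1)^{1/2}(\mathcal{E}_3+\mathcal{P}_3+\mathcal{B}_3)^{1/2}$ arises. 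Missing a weight split by a single power would produce a logarithmically divergent time integral, so the precise choice of interpolation at each of the many nonlinear terms is essential.
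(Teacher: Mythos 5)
Your overall plan matches the paper's proof: apply $\partial^{s-k}\partial_2$, exploit the cancellation of the linear cross terms (this is the paper's $N_2=0$), multiply by $w_k$, integrate in time, and split the nonlinear bulk into transport / $\nabla\cdot u$ / residual pieces that are closed with Lemmas~\ref{prop0}--\ref{prop1}, Poincar\'e, Biot--Savart and the anisotropic interpolation. That is exactly the paper's strategy.

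The one place where your description does not actually close is the treatment of the weight-derivative remainder (the paper's term $N_1 = \tfrac{k-\sigma}{2}w_{k-1}\bigl(\|\partial_2\rho\|_{\dot H^{s-k}}^2+\|\partial_2 u\|_{\dot H^{s-k}}^2+\|\partial_2 b\|_{\dot H^{s-k}}^2\bigr)$), which is precisely what produces the bilinear factor $\mathcal{E}_0^{1/2}(\mathcal{E}_1^{1/2}+\mathcal{P}_1^{1/2}+\mathcal{B}_1^{1/2})$ for $k=1$, and $(\mathcal{E}_1^{1/2}+\mathcal{P}_1^{1/2}+\mathcal{B}_1^{1/2})(\mathcal{E}_3^{1/2}+\mathcal{P}_3^{1/2}+\mathcal{B}_3^{1/2})$ for $k=3$. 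You propose to handle the $\rho$ and $b$ contributions by ``Poincar\'e plus div-curl'' to trade a vertical derivative for a $\partial_2$. But mean-zero Poincar\'e goes the wrong direction: $\|\partial^\alpha\partial_2 g\|_{L^2}\lesssim\|\partial^\alpha\partial_2^2 g\|_{L^2}$ \emph{raises} the total order, so after adding the Biot--Savart gain you land in $\|\partial_2^2\nabla^{\bot}\cdot b\|_{H^{s-k-1}}$ rather than the $H^{s-k-2}$ that $\mathcal{B}_k$ stores; $\mathcal{P}_k$ has the same derivative-count problem. The tool that actually closes this is the anisotropic interpolation you list but assign only to the nonlinear cubics: $\|\partial_2 g\|_{\dot H^{s-k}}\lesssim\|g\|_{\dot H^{s-k+1}}^{1/2}\|\partial_2^2 g\|_{\dot H^{s-k-1}}^{1/2}$. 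Combining this with the weight split $w_{k-1}=w_{k-2}^{1/2}w_k^{1/2}$ and Cauchy--Schwarz in time gives (for $k=1$) $\int_0^t w_0\|\partial_2\rho\|_{\dot H^{s-1}}^2\,d\tau\lesssim\bigl(\int_0^t w_{-1}\|\rho\|_{H^s}^2\bigr)^{1/2}\bigl(\int_0^t w_1\|\partial_2^2\rho\|_{H^{s-2}}^2\bigr)^{1/2}\lesssim\mathcal{E}_0^{1/2}\mathcal{P}_1^{1/2}$, and similarly for $b$ after one further Biot--Savart step; the $u$ piece uses the same split with $\|\partial_2 u\|_{\dot H^{s-1}}\lesssim\|\nabla\partial_2 u\|_{H^{s-1}}$ to reach $\mathcal{E}_0^{1/2}\mathcal{E}_1^{1/2}$ rather than the cruder $\mathcal{E}_0$ bound you suggest. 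Your remaining steps are sound, but you should make this interpolation/weight-split the explicit mechanism for $N_1$ rather than Poincar\'e alone.
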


\vskip .1in

Next we bound $\mathcal{P}_k(t)$.

\begin{proposition}\label{strongDis2}
		Let $\mathcal{E}_{total}(t)$ be the total energy defined in \eqref{total}. Then, for any $t>0$,
	\begin{equation*}
		\mathcal{P}_k(t) \lesssim \; \mathcal{E}_k(0) + \mathcal{E}_k(t)+\mathcal{E}^{\frac{3}{2}}_{total}(t),\qquad k=1, 3.
	\end{equation*}
\end{proposition}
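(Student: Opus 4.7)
The plan is to exploit the wave structure (\ref{ww}) between $\rho$ and $u_2$ to extract vertical dissipation for $\rho$. Writing the second momentum equation in (\ref{mhd1}) as $\partial_t u_2-\Delta u_2+\partial_2\rho=N_{u_2}$, where $N_{u_2}$ collects all the nonlinearities, and applying $\partial^{\alpha}\partial_2$ for $|\alpha|\le s-1-k$ gives the pointwise identity
\begin{equation*}
  \partial^{\alpha}\partial_2^2\rho=-\partial^{\alpha}\partial_2\partial_t u_2+\partial^{\alpha}\partial_2\Delta u_2+\partial^{\alpha}\partial_2 N_{u_2}.
\end{equation*}
Squaring, integrating on $\mathbb{T}^2$, weighting by $w_k$ and integrating in $\tau\in[0,t]$ turns the left side, after summing $|\alpha|\le s-1-k$, into the integrand of $\mathcal{P}_k$.

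The $\partial_t u_2$ factor on the right is handled by an integration by parts in time, which produces the boundary term $[w_k G_\alpha]_0^t$ with $G_\alpha(t)\triangleq\int_{\mathbb{T}^2}\partial^{\alpha}\partial_2 u_2\,\partial^{\alpha}\partial_2^2\rho\,dx$, a weight correction $\int_0^t w_k' G_\alpha\,d\tau$, and a term into which we substitute $\partial_2^2\partial_t\rho$ from the mass equation $\partial_t\rho=-\partial_2 u_2-\partial_1 u_1+N_\rho$. After one $x_2$-IBP, the identity $-\int\partial_2 u_2\,\partial_2^3 u_2=\|\partial_2^2 u_2\|^2$ leaves $\int_0^t w_k\|\partial^{\alpha}\partial_2^2 u_2\|^2 d\tau$, which is absorbed by $\mathcal{E}_k(t)$ via $\|\partial_2^2 u_2\|_{H^{s-1-k}}\le\|\nabla\partial_2 u\|_{H^{s-k}}$; the companion term involving $\partial_2^2\partial_1 u_1$ is controlled similarly after a further $x_2$-IBP. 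The cross contribution $\int w_k\,\partial^{\alpha}\partial_2\Delta u_2\,\partial^{\alpha}\partial_2^2\rho$ is split by Young's inequality, with the $\|\partial_2^2\rho\|^2$ part absorbed on the left and the complement bounded by $\int w_k\|\nabla\partial_2 u\|_{H^{s-k}}^2\le\mathcal{E}_k$. The boundary $w_k G_\alpha|_{\tau=t,0}$ is estimated, after one more $x_2$-IBP, by $w_k\|\partial_2 u\|_{H^{s-k}}\|\partial_2\rho\|_{H^{s-k}}\lesssim\mathcal{E}_k(t)+\mathcal{E}_k(0)$.

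The most delicate piece is the weight-derivative correction $\int_0^t w_k' G_\alpha\,d\tau$, because $w_k'\simeq w_{k-1}$ and no dissipation integral for $\partial_2\rho$ sits in our hierarchy. I would use the interpolation $\|\partial^{\alpha}\partial_2\rho\|^2\le\|\partial^{\alpha}\rho\|\,\|\partial^{\alpha}\partial_2^2\rho\|$ (one $x_2$-IBP plus Cauchy--Schwarz) and Young's to write
\begin{equation*}
  |w_k'G_\alpha|\lesssim\eta\,w_k\|\partial^{\alpha}\partial_2^2\rho\|^2+C_\eta\,\frac{(w_k')^2}{w_k}\,\|\partial^{\alpha}\partial_2^2 u_2\|^2\,\|\partial^{\alpha}\rho\|.
\end{equation*}
The first term absorbs to the left. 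A direct computation yields $(w_1')^2/w_1\simeq w_{-1}$ and $(w_3')^2/w_3\simeq w_1$; combined with the uniform bound $\|\rho\|_{H^{s-1-k}}\lesssim\mathfrak{E}^{1/2}$ and the dissipation integrals $\int w_{-1}\|u\|_{H^s}^2\,d\tau\lesssim\mathcal{E}_0$ and $\int w_1\|\nabla\partial_2 u\|_{H^{s-1}}^2\,d\tau\lesssim\mathcal{E}_1$ already present in $\mathcal{E}_{total}$, the residual closes as $\mathfrak{E}^{1/2}(\mathcal{E}_0+\mathcal{E}_1)\lesssim\mathcal{E}_{total}^{3/2}(t)$.

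The nonlinear pieces coming from $\partial^{\alpha}\partial_2 N_{u_2}$ and $\partial^{\alpha}\partial_2^2 N_\rho$ are treated by the same Type I/II/III decomposition used in the proof of Proposition \ref{lemmaBasic}, invoking the commutator Lemma \ref{prop0} and the triple-product Lemma \ref{prop1}, and each is bounded by $\mathcal{E}_{total}^{3/2}(t)$. Collecting everything delivers the claim $\mathcal{P}_k(t)\lesssim\mathcal{E}_k(0)+\mathcal{E}_k(t)+\mathcal{E}_{total}^{3/2}(t)$ for $k=1,3$. The principal difficulty I anticipate is matching the exotic weight $(w_k')^2/w_k$ against a dissipation already present in $\mathcal{E}_{total}$, which forces the use of the interpolation $\|\partial_2\rho\|^2\le\|\rho\|\|\partial_2^2\rho\|$ rather than a naive Young's on $G_\alpha$, together with a careful choice of absorption constants uniform in $|\alpha|$.
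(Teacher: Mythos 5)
Your overall structure matches the paper's: rewrite the $u_2$-equation to isolate $\partial_2\rho$, apply $\partial_2$ and pair with $\partial_2^2\rho$ in $\dot H^{s-1-k}$, handle the $\partial_t u_2$ factor by an integration by parts in time, substitute the mass equation for $\partial_t\rho$, and treat the nonlinear pieces via Lemmas \ref{prop0}--\ref{prop1}. However, your treatment of the weight-derivative correction has two problems, one genuine and one of strategy.

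First, the claimed Young's step does not follow from the stated interpolation. After $\|\partial^\alpha\partial_2\rho\|^2\le\|\partial^\alpha\rho\|\,\|\partial^\alpha\partial_2^2\rho\|$ you have $|w_k'G_\alpha|\lesssim w_{k-1}\|\partial^\alpha\partial_2^2u_2\|\,\|\partial^\alpha\rho\|^{1/2}\|\partial^\alpha\partial_2^2\rho\|^{1/2}$, whose homogeneity in $(\|\partial^\alpha\partial_2^2 u_2\|,\|\partial^\alpha\rho\|,\|\partial^\alpha\partial_2^2\rho\|)$ is $(1,\tfrac12,\tfrac12)$. The two terms you propose on the right have homogeneities $(0,0,2)$ and $(2,1,0)$ respectively; the $\|\partial_2^2\rho\|$ exponent forces a convex weight $\lambda_1=1/4$ on the first term, while the $\|u''\|$ (and $\|\rho\|$) exponents force $\lambda_2=1/2$ on the second, and $\lambda_1+\lambda_2\ne 1$, so the inequality is not a valid weighted AM--GM of those two monomials. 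Second, and more to the point, the premise ``no dissipation integral for $\partial_2\rho$ sits in our hierarchy'' is not right: $\mathcal{P}_k$ itself is the $w_k$-weighted time integral of $\|\partial_2^2\rho\|_{H^{s-1-k}}^2$, and the term you are bounding is exactly the quantity being estimated, so it can be absorbed. The paper does this directly: write the weight-correction integrand as $w_{k-1}^{1/2}\|\partial_2 u_2\|_{H^{s-1-k}}\cdot w_{k-1}^{1/2}\|\partial_2^2\rho\|_{H^{s-1-k}}$, apply Cauchy--Schwarz in time, use $w_{k-1}\le w_k$ together with Poincar\'e so that the first factor is $\lesssim\mathcal{E}_k^{1/2}(t)$ and the second is $\le\mathcal{P}_k^{1/2}(t)$, obtaining $\mathcal{E}_k^{1/2}(t)\mathcal{P}_k^{1/2}(t)$, which Young's inequality converts to $\tfrac12\mathcal{P}_k(t)+C\,\mathcal{E}_k(t)$ and the $\mathcal{P}_k$ half is absorbed. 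No interpolation on $\partial_2\rho$ is needed, and the exotic weight $(w_k')^2/w_k$ never appears.
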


The last proposition bounds $\mathcal{B}_k(t)$, the energy functional involving the vertical derivative
of the magnetic field.

\begin{proposition}\label{strongDis3}
	Let $\mathcal{E}_{total}(t)$ be the total energy defined in \eqref{total}. Then, for any $t>0$,
	\begin{equation*}
		\mathcal{B}_k(t) \lesssim \;\mathcal{E}_k(0)+ \mathcal{E}_k(t)+\mathcal{P}_k(t)+\mathcal{E}^{\frac{3}{2}}_{total}(t),\qquad k=1, 3.
	\end{equation*}
\end{proposition}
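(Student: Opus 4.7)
The plan is to exploit the $u_1$-component of the velocity equation to express $\nabla^{\bot}\cdot b$ algebraically, and then run a cross energy estimate that trades the desired dissipation against $\mathcal{E}_k$ and $\mathcal{P}_k$. Multiplying the first component of the velocity equation in \eqref{mhd1} through by $(\rho+1)$ and collecting the two $\nabla^{\bot}\cdot b$ terms, one arrives at the pointwise identity
\[
\nabla^{\bot}\cdot b \;=\; u_{1,t} - \Delta u_1 + \partial_1\rho + \mathcal{N},
\]
where $\mathcal{N}$ lumps the nonlinear pieces $\rho u_{1,t}$, $(P'(\tilde\rho)-1)\partial_1\rho$, $(\rho+1)\,u\cdot\nabla u_1$, $-(b\cdot\nabla b)_1$ and $\tfrac{1}{2}\partial_1|b|^2$. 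The linear trio $u_{1,t}-\Delta u_1+\partial_1\rho$ is the tradeable quantity: its three pieces will eventually be absorbed respectively by a time integration by parts, by $\mathcal{E}_k$, and by $\mathcal{P}_k$.

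I would next apply $\partial^{s-2-k}\partial_2^2$ to this identity, pair it with $\partial^{s-2-k}\partial_2^2\,\nabla^{\bot}\cdot b$ in $L^2(\mathbb{T}^2)$, multiply by $w_k$, integrate over $[0,t]$ and sum over all multi-indices of order $\leq s-2-k$. The resulting identity expresses $\mathcal{B}_k(t)$ as pairings against $\partial_2^2\Delta u_1$, $\partial_2^2\partial_1\rho$, $\partial_2^2 u_{1,t}$ and $\partial_2^2\mathcal{N}$. A Young's inequality applied to the $\Delta u_1$ and $\partial_1\rho$ pairings splits each into a small $\epsilon\,\mathcal{B}_k$ (absorbed into the LHS) and a controlled portion. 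Using the shifts $\|\partial_2^2\Delta u_1\|_{H^{s-2-k}}\lesssim\|\nabla\partial_2 u\|_{H^{s-k}}$ and $\|\partial_2^2\partial_1\rho\|_{H^{s-2-k}}\lesssim\|\partial_2^2\rho\|_{H^{s-1-k}}$, the $\Delta u_1$ contribution is bounded by $\mathcal{E}_k(t)$ and the $\partial_1\rho$ contribution by $\mathcal{P}_k(t)$. The $\mathcal{N}$-pairing is placed on $\mathcal{E}_{total}^{3/2}(t)$ by standard Sobolev-product estimates in the spirit of Lemmas \ref{prop0}--\ref{prop1} combined with the ansatz \eqref{assum}; the only slightly delicate term in $\mathcal{N}$ is $\rho u_{1,t}$, which I would handle either by substituting $u_{1,t}$ back from the $u_1$-equation (so $\rho u_{1,t}$ becomes $\tfrac{\rho}{\rho+1}[\Delta u_1 + \nabla^{\bot}\cdot b - \partial_1 P] + O(\rho)\cdot$NL, with the $\rho\,\nabla^{\bot}\cdot b$ piece producing another small $\epsilon\,\mathcal{B}_k$) or by a secondary time integration by parts.

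The crux is the $u_{1,t}$-pairing, for which I would integrate by parts in time using the identity
\[
(\nabla^{\bot}\cdot b)_t \;=\; \nabla^{\bot}\cdot\nabla^{\bot}u_1 + \nabla^{\bot}\cdot(-u\cdot\nabla b+b\cdot\nabla u-b\,\nabla\cdot u) \;=\; \Delta u_1 + \mathcal{N}_b,
\]
obtained by taking $\nabla^{\bot}\cdot$ of the $b$-equation. This produces (i) boundary terms $[w_k\,\langle\partial_2^2 u_1,\partial_2^2\,\nabla^{\bot}\cdot b\rangle_{H^{s-2-k}}]_0^t$, controlled by $\mathcal{E}_k(t)+\mathcal{E}_k(0)$ via Cauchy--Schwarz and the shifts $\|\partial_2^2 u_1\|_{H^{s-2-k}}\lesssim\|\partial_2 u\|_{H^{s-k}}$ and $\|\partial_2^2\,\nabla^{\bot}\cdot b\|_{H^{s-2-k}}\lesssim\|\partial_2 b\|_{H^{s-k}}$; (ii) a weight-derivative correction involving $w_k'$; and (iii) an internal piece $-\int_0^t w_k\,\langle\partial_2^2 u_1,\partial_2^2(\Delta u_1+\mathcal{N}_b)\rangle_{H^{s-2-k}}\,d\tau$ whose $\Delta u_1$ part integrates by parts in space to $+\int_0^t w_k\|\nabla\partial_2^2 u_1\|_{H^{s-2-k}}^2\,d\tau\lesssim\mathcal{E}_k(t)$ (note the favorable sign), while its $\mathcal{N}_b$ part contributes to $\mathcal{E}_{total}^{3/2}(t)$.

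The main obstacle will be the weight-derivative correction in (ii). Since $w_k'=(k-\sigma)\,w_{k-1}$, the naive symmetric Cauchy--Schwarz split $w_{k-1}|AB|\leq\tfrac{1}{2}(w_{k-1}A^2+w_{k-1}B^2)$ produces an integrand of order $(1+\tau)^{-1}$, which fails to be integrable over $[0,t]$. The fix is to distribute the weight asymmetrically via the identity $w_{k-1}=w_k^{1/2}w_{k-2}^{1/2}$: the factor $w_k^{1/2}\|\partial_2^2\,\nabla^{\bot}\cdot b\|_{H^{s-2-k}}$ is absorbed into the LHS as $\epsilon\,\mathcal{B}_k(t)$, while the factor $w_{k-2}^{1/2}\|\partial_2^2 u_1\|_{H^{s-2-k}}$ is controlled via the pointwise bound $\|\partial_2 u(\tau)\|_{H^{s-k}}^2\leq w_k^{-1}(\tau)\,\mathcal{E}_k$ coming from the $\sup$-piece of $\mathcal{E}_k$. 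This renders $\int_0^t w_{k-2}(\tau)\,w_k^{-1}(\tau)\,d\tau=\int_0^t(1+\tau)^{-2}\,d\tau$ absolutely convergent, yielding a bound $\epsilon\,\mathcal{B}_k(t)+C\,\mathcal{E}_k(t)$ for the $w_k'$-correction. Summing all contributions and absorbing the small $\mathcal{B}_k$-terms into the LHS delivers the claim for both $k=1$ and $k=3$.
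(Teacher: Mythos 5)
Your argument follows the same route as the paper's proof of Proposition~\ref{strongDis3}: solve the $u_1$-equation for $\nabla^{\bot}\cdot b$, pair $\partial_2^2$ of the resulting identity against $\partial_2^2\nabla^{\bot}\cdot b$ in $\dot H^{s-2-k}$, and handle the $u_{1,t}$-pairing by a time integration by parts combined with the $b$-equation, trading the weight-derivative and boundary pieces against $\mathcal{E}_k(0),\,\mathcal{E}_k(t),\,\mathcal{P}_k(t)$ while absorbing small multiples of $\mathcal{B}_k$ via Young's inequality. Two small remarks: (i) premultiplying by $\rho+1$ puts $\rho\,u_{1,t}$ into the nonlinear remainder, a self-inflicted complication you then must undo by back-substitution; the paper keeps $\frac{\rho}{\rho+1}\nabla^{\bot}\cdot b$ there instead, which is already a clean cubic term. (ii) Your worry that the symmetric Cauchy--Schwarz split of the $w_k'=(k-\sigma)w_{k-1}$ correction produces a non-integrable $(1+\tau)^{-1}$ is a misdiagnosis: since $w_{k-1}\le w_k$ and $\|\partial_2^2 u_1\|_{H^{s-2-k}}\lesssim\|\nabla\partial_2 u\|_{H^{s-k}}$, one controls $\int_0^t w_{k-1}\|\partial_2^2 u_1\|_{H^{s-2-k}}^2\,d\tau$ by the time-integral (dissipative) piece of $\mathcal{E}_k$ rather than the $\sup$-piece, which is exactly how the paper reaches $\mathcal{E}_k^{1/2}(t)\mathcal{B}_k^{1/2}(t)$ before applying Young; your asymmetric $w_{k-1}=w_k^{1/2}w_{k-2}^{1/2}$ split is also correct but solves a non-problem.
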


\vskip .1in
The rest of this section proves these propositions. We start with the first proposition.

\begin{proof}[Proof of Proposition \ref{strongDis}]
Due to Poinc\'{a}re's inequality, we only focus on the terms with the highest-order derivatives. According to \eqref{mhd1}, for $k=1,3$,
\begin{equation}\label{N}
\begin{split}
&\frac{1}{2} \frac{d}{dt} w_k \big(\|\partial_2 \rho\|_{\dot H^{s-k}}^2 + \|\partial_2 u\|_{\dot H^{s-k}}^2 + \|\partial_2 b\|_{\dot H^{s-k}}^2 \big)
+ w_k\|\nabla \partial_2 u\|_{\dot H^{s-k}}^2 = \sum_{i=1}^{13} N_i,
\end{split}
\end{equation}
where
\begin{align*}
N_1 = & \;\frac{k- \sigma}{2} w_{k-1} \big(\|\partial_2 \rho\|_{\dot H^{s-k}}^2 + \|\partial_2 u\|_{\dot H^{s-k}}^2 + \|\partial_2 b\|_{\dot H^{s-k}}^2\big), \\
N_2 = & \; w_k \Big(\int_{\mathbb{T}^2} - \partial^{s-k}\partial_2 \nabla \cdot u \partial^{s-k}\partial_2 \rho\; dx - \int_{\mathbb{T}^2} \partial^{s-k} \partial_2 \nabla \rho
\cdot\partial^{s-k} \partial_2 u\; dx \Big)\\
& +  w_k \Big( \int_{\mathbb{T}^2} \partial^{s-k}\partial_2\nabla^{\bot} \cdot b \partial^{s-k}\partial_2 u_1 + \int_{\mathbb{T}^2} \partial^{s-k}\partial_2 \nabla^{\bot}  u_1 \cdot\partial^{s-k}\partial_2 b \; dx \Big),\\
N_3 = & - w_k\int_{\mathbb{T}^2} \partial^{s-k}\partial_2 (u \cdot \nabla \rho) \partial^{s-k}\partial_2 \rho \; dx, \\
N_4 =& - w_k\int_{\mathbb{T}^2} \partial^{s-k}\partial_2(\rho \nabla \cdot u) \partial^{s-k}\partial_2 \rho \; dx, \\
N_5 = & - w_k\int_{\mathbb{T}^2} \partial^{s-k}\partial_2 (u \cdot \nabla u) \cdot\partial^{s-k}\partial_2 u \; dx,\\
N_6 =& - w_k\int_{\mathbb{T}^2} \partial^{s-k}\partial_2 \Big(\frac{\rho}{\rho + 1} \Delta u\Big) \cdot \partial^{s-k}\partial_2 u \; dx, \\
N_7 = & - w_k\int_{\mathbb{T}^2} \partial^{s-k}\partial_2 \Big(\frac{1}{\rho + 1} b \cdot \nabla b\Big) \cdot\partial^{s-k}\partial_2 u \; dx, \\
N_8 = & - w_k\int_{\mathbb{T}^2} \partial^{s-k}\partial_2 \Big(\frac{\rho}{\rho + 1} \nabla^{\bot} \cdot b\Big) \partial^{s-k}\partial_2 u_1 \; dx, \\
N_9 = & \; w_k\int_{\mathbb{T}^2} \partial^{s-k}\partial_2 \Big(\nabla \rho - \frac{1}{\rho + 1} \nabla P\Big) \cdot \partial^{s-k}\partial_2 u\, dx,\\
N_{10} =&  - w_k\int_{\mathbb{T}^2} \partial^{s-k}\partial_2 (u \cdot \nabla b) \cdot\partial^{s-k}\partial_2 b\, dx, \\
N_{11} = &\; w_k\int_{\mathbb{T}^2} \partial^{s-k}\partial_2(b \cdot \nabla u) \cdot \partial^{s-k}\partial_2 b \; dx, \\
N_{12} =& - w_k\int_{\mathbb{T}^2} \partial^{s-k}\partial_2(b \nabla \cdot u) \cdot\partial^{s-k}\partial_2 b \; dx,\\
N_{13} = & - w_k\int_{\mathbb{T}^2} \partial^{s-k}\partial_2 \Big(\frac{1}{2(\rho+1)} \nabla |b|^2 \Big)\cdot \partial^{s-k}\partial_2 u \; dx.
\end{align*}
To bound the first term $N_1$, we use the basic equalities
$$
w_0 (t) = \; w_{-1}^{\frac{1}{2}} (t) w_1^{\frac{1}{2}} (t), \qquad
w_2 (t) = \; w_{1}^{\frac{1}{2}} (t) w_3^{\frac{1}{2}} (t)
$$
to obtain, for $k=1$,
\begin{equation}\label{1N1}
\int_0^t |N_1| \; d \tau \lesssim \; \mathcal{E}_0^{\frac{1}{2}} (t) \big( \mathcal{E}_1^{\frac{1}{2}}(t) + \mathcal{P}_1^{\frac{1}{2}}(t) +\mathcal{B}_1^{\frac{1}{2}}(t)  \big).
\end{equation}
For $k=3$,
\begin{equation}\label{3N1}
\int_0^t |N_1| \; d \tau \lesssim \;   \big( \mathcal{E}_1^{\frac{1}{2}} (t)+ \mathcal{P}_1^{\frac{1}{2}}(t) +\mathcal{B}_1^{\frac{1}{2}}(t)  \big)\big( \mathcal{E}_3^{\frac{1}{2}}(t) + \mathcal{P}_3^{\frac{1}{2}}(t) +\mathcal{B}_3^{\frac{1}{2}}(t)  \big).
\end{equation}
By integration by parts,
\begin{equation}\label{N2}
N_2  = 0.
\end{equation}
To bound the rest of the terms more efficiently, we regroup them into three types:
Transport terms, the terms containing $\na\cdot u$ and good terms. We start with the transport terms.

\quad \\
\noindent
{\bf Type I: Transport terms containing $u\cdot \nabla$ or $b \cdot \nabla$ }
~\\~

This part contains $N_3, N_5, N_7, N_{10}$ and $N_{11}$. We first deal with the three terms
involving $u\cdot\na$, namely  $N_3, N_5$ and $N_{10}$. By integration by parts,
for $g=u, b, \rho$,
\begin{equation}\label{45}
  \begin{split}
    |N_3| + |N_5| + |N_{10}|
     \le  \;  & w_k\sum_{g = u, b, \rho} \Big|\int_{\mathbb{T}^2} \partial^{s-k}(\partial_2 u \cdot \nabla  g) \partial^{s-k}\partial_2 g  \; dx \Big| \\
   & + w_k \sum_{g = u, b, \rho} \Big|\int_{\mathbb{T}^2} \partial^{s-k}( u \cdot \nabla \partial_2 g) \partial^{s-k}\partial_2 g  \; dx \Big|.
  \end{split}
\end{equation}
In the case when  $k = 1$, the first part on the right hand side of \eqref{45} can be bounded as follows,
\begin{equation}\nonumber
  \begin{split}
     &w_1\sum_{g = u, b, \rho}\Big|\sum_{m = 1}^{s-1}C_{s-1}^m\int_{\mathbb{T}^2} (\partial_2 \partial^m u \cdot \nabla \partial^{s-1-m} g) \partial^{s-1}\partial_2 g  \; dx\\
     &\qquad\qquad  + \int_{\mathbb{T}^2} (\partial_2  u \cdot \nabla \partial^{s-1} g) \partial^{s-1}\partial_2 g  \; dx \Big| \\
     \lesssim & \; w_1\sum_{g = u, b, \rho} \Big( \|\partial_2 u\|_{H^{s}} \|g\|_{H^{s-1}} \|\partial_2 g\|_{H^{s-2}} + \|\partial_2 u\|_{H^2} \|g\|_{H^s} \|\partial_2 g\|_{H^{s-1}} \Big).
  \end{split}
\end{equation}
To obtain the bounds above, we have used the following estimates
\begin{equation}\nonumber
\begin{split}
   m=s-1, \quad  \int_{\mathbb{T}^2}(\partial_2\partial^{s-1} u\cdot\nabla g )\partial^{s-1}\partial_2g\; dx
   = & - \int_{\mathbb{T}^2}(\partial_2\partial^{s} u\cdot\nabla g )\partial^{s-2}\partial_2g\; dx  \\
   & - \int_{\mathbb{T}^2}(\partial_2\partial^{s-1} u\cdot\nabla \partial g ) \partial^{s-2}\partial_2g\; dx \\
   \lesssim & \; \|\partial_2 u\|_{H^{s}} \|g\|_{H^{s-1}} \|\partial_2 g\|_{H^{s-2}}, \\
m=1,\quad  \int_{\mathbb{T}^2}(\partial_2 \partial u\cdot\nabla \partial^{s-2} g )\partial^{s-1}\partial_2g\; dx \lesssim & \|\partial_2 u\|_{H^3} \|g\|_{H^s} \|\partial_2 g\|_{H^{s-1}}.
\end{split}
\end{equation}
For $k = 3$, the first part on the right hand side of \eqref{45} can be bounded by
\begin{equation}\nonumber
  \begin{split}
     w_3\sum_{g = u, b, \rho}\Big|\int_{\mathbb{T}^2} \partial^{s-2}(\partial_2 u \cdot \nabla  g) \partial^{s-4}\partial_2 g  \; dx\Big|
     \lesssim \; w_3 \|\partial_2 u\|_{H^{s-2}} \|g\|_{H^{s-1}} \|\partial_2 g\|_{H^{s-4}}.
  \end{split}
\end{equation}
For the second part on the right-hand side of \eqref{45}, we  apply Lemma \ref{prop1}
to obtain, for both $k=1$ and $k=3$,
\begin{equation}\nonumber
  \begin{split}
    & w_k  \sum_{g = u, b, \rho} \Big(\|\nabla u_1\|_{H^{[\frac{s-k}{2}] + 1}} \|\partial_1 \partial_2g\|_{H^{s-k-1}} +\|\nabla u_2\|_{H^{[\frac{s-k}{2}] + 1}} \|\partial_2^2 g\|_{H^{s-k-1}} \\
  &+ \|\nabla u_1\|_{H^{s-k-1}} \| \partial_1 \partial_2 g\|_{H^{[\frac{s-k}{2}] + 2}}+ \|\nabla u_2\|_{H^{s-k-1}} \| \partial_2^2 g\|_{H^{[\frac{s-k}{2}] + 2}}\Big) \|\partial_2 g \|_{\dot H^{s-k}} \\
& + w_k   \sum_{g = u, b, \rho} \|\nabla \cdot u\|_{H^2}\|\partial_2 g\|_{\dot H^{s-k}}^2 \\
   \lesssim & \; w_k \|\nabla u\|_{H^{s-k-1}} \sum_{g = u, \rho, b} \|\partial_2 g\|_{H^{s-k}}^2 .
  \end{split}
\end{equation}
Therefore, for $k = 1$,
\begin{equation}\label{1N3510}
\begin{split}
 \int_0^t |N_3| + |N_5| + |N_{10}| \; d \tau
\lesssim &  \sum_{g = u, b, \rho}\sup_{0\leq \tau \leq t} w_1 \|\partial_2 g\|_{H^{s-1}}^2 \int_0^t \|\nabla u\|_{H^{s-2}} \; d\tau \\
 \lesssim  & \; \mathcal{E}_1(t)\big(\mathcal{E}_2^\frac{1}{2} (t)+ \mathcal{F}_2^\frac{1}{2}(t) + \mathfrak{U}^\frac{1}{2}(t)\big).
\end{split}
\end{equation}
For $k = 3$,
\begin{equation}\label{3N3510}
\begin{split}
\int_0^t |N_3| + |N_5| + |N_{10}| \; d \tau  \lesssim  & \sum_{g = u, b, \rho} \sup_{0\leq \tau \leq t} w_3 \|\partial_2 g\|_{H^{s-3}}^2 \int_0^t \|\nabla u\|_{H^{s-4}} \; d\tau \\
\lesssim & \; \mathcal{E}_3(t) \big(\mathcal{E}_2^\frac{1}{2}(t) + \mathcal{F}_2^\frac{1}{2}(t) + \mathfrak{U}^\frac{1}{2}(t)\big).
\end{split}
\end{equation}

Next we bound the terms containing $b\cdot\nabla$ operator, namely $N_7$ and $N_{11}$. By integration by parts,
\begin{equation}\nonumber
    N_7 =\; w_k \int_{\mathbb{T}^2} \partial^{s-k - 1} \partial_2\Big(\frac{1}{\rho+1}b \cdot \nabla b \Big)\cdot \partial^{s-k+1}\partial_2 u\; dx.
\end{equation}
According to the product estimates and the fact $\|\frac{1}{\rho+1}\|_{L^\infty} \lesssim 1$,
\begin{equation}\nonumber
  \begin{split}
     |N_7|
    \lesssim & \; w_k \Big(\|\partial_2 \rho\|_{H^{s-k-1}} \|b\|_{H^{s-k}}^2 + \|\partial_2 b\|_{H^{s-k-1}} \|b\|_{H^{s-k}} \\
    & + \|b_1\|_{H^{s-k-1}} \|\partial_1 \partial_2 b\|_{H^{s-k-1}} + \|b_2\|_{H^{s-k-1}}\|\partial_2^2 b\|_{H^{s-k-1}} \Big) \|\partial_2 \nabla u\|_{H^{s-k}}.
  \end{split}
\end{equation}
Applying Poinc\'{a}re's inequality again, $|N_7|$ can be controlled by
$$ w_k \Big(\|\partial_2^2 \rho\|_{H^{s-k-1}} \|b\|_{H^{s-k}}^2 + \|\partial_2^2 b\|_{H^{s-k-1}} \|b\|_{H^{s-k}}
    + \|\partial_2^2 b\|_{H^{s-k-1}} \|\partial_2 b\|_{H^{s-k}}\Big) \|\partial_2 \nabla u\|_{H^{s-k}}.
$$
Integrating $|N_7|$ in time on $[0,t]$, for both $k = 1, 3$  we can derive the unified estimate like follows.
\begin{equation} \label{N7}
  \begin{split}
    \int_0^t |N_7| \; d\tau \lesssim & \sup_{0\leq \tau \leq t} \|b\|_{H^{s-k}}^2 \int_0^t w_k^\frac{1}{2}\|\partial_2^2\rho\|_{H^{s-k-1}} w_k^\frac{1}{2} \|\partial_2 \nabla u\|_{H^{s-k}} \; d\tau \\
    & + \sup_{0\leq \tau \leq t} (\|b\|_{H^{s-k}} + \|\partial_2 b\|_{H^{s-k}}) \int_0^t w_k^\frac{1}{2}\|\partial_2^2b\|_{H^{s-k-1}} w_k^\frac{1}{2} \|\partial_2 \nabla u\|_{H^{s-k}} \; d\tau \\
    \lesssim & \; \mathfrak{E}(t) \mathcal{P}_k^\frac{1}{2}(t) \mathcal{E}_k^\frac{1}{2}(t) + \big(\mathfrak{E}^\frac{1}{2}(t) + \mathcal{E}_1^\frac{1}{2}(t)\big)\mathcal{B}_k^\frac{1}{2}(t)\mathcal{E}_k^\frac{1}{2}(t).
  \end{split}
\end{equation}
$N_{11}$ can be estimated similarly. We first write it as
$$
 N_{11}  = w_k \int_{\mathbb{T}^2} \partial^{s-k+1} (b \cdot \nabla  u) \cdot \partial^{s-k-1}\partial_2^2 b\; dx.
$$
After a similar process, we obtain, for $k = 1$,
\begin{equation}\label{1N11}
  \begin{split}
    \int_0^t |N_{11}| \; d\tau \lesssim  & \; \sup_{0\leq \tau \leq t} w_2^{\frac{1}{2}} \| b_1\|_{L^{\infty}} \int_0^t w_0^{\frac{1}{2}} \| \partial_1 u \|_{H^s} w_1^{\frac{1}{2}} \| \partial^2 b\|_{H^{s-2}} \; d\tau \\
 & + \sup_{0\leq \tau \leq t} w_1^{\frac{1}{2}} \| b_1\|_{H^s} \int_0^t  \| \partial_1 u \|_{L^{\infty}} w_1^{\frac{1}{2}} \| \partial^2 b\|_{H^{s-2}} \; d\tau  \\
 & + \sup_{0\leq \tau \leq t} \| b_2\|_{L^{\infty}} \int_0^t w_1^{\frac{1}{2}} \| \partial_2 u \|_{H^s} w_1^{\frac{1}{2}} \| \partial^2 b\|_{H^{s-2}} \; d\tau \\
 & + \sup_{0\leq \tau \leq t} w_0^{\frac{1}{2}} \| b_2\|_{H^s} \int_0^t  w_2^{\frac{1}{2}} \| \partial_2 u \|_{L^{\infty}} w_1^{\frac{1}{2}} \| \partial^2 b\|_{H^{s-2}} \; d\tau \\
\lesssim  & \; \mathcal{E}_2^\frac{1}{2}(t)\mathcal{E}_0^\frac{1}{2}(t)\mathcal{B}_1^\frac{1}{2}(t) + \mathcal{E}_1^\frac{1}{2}(t)\mathfrak{E}^\frac{1}{2}(t) \mathcal{B}_1^\frac{1}{2}(t) \\
&+\mathfrak{E}^\frac{1}{2}(t)\mathcal{E}_1^\frac{1}{2}(t) \mathcal{B}_1^\frac{1}{2}(t) + \mathcal{E}_0^\frac{1}{2}(t)\mathcal{E}_2^\frac{1}{2}(t)\mathcal{B}_1^\frac{1}{2}(t).
  \end{split}
\end{equation}
Notice that we have used $ \|b_1\|_{H^s} \lesssim \| \partial b\|_{H^{s-1}}$ again.
When $k = 3$, we have
\begin{equation}\label{3N11}
  \begin{split}
    \int_0^t |N_{11}| \; d\tau  \lesssim & \; \sup_{0\leq \tau \leq t} w_3^{\frac{1}{2}} \| b \|_{L^{\infty}} \int_0^t \| \nabla u \|_{H^{s-2}} w_3^{\frac{1}{2}} \| \partial_2^2 b\|_{H^{s-4}}  \; d\tau \\
    & + \sup_{0\leq \tau \leq t} w_3^{\frac{1}{2}} \| b \|_{H^{s-2}} \int_0^t \| \nabla u \|_{L^{\infty}} w_3^{\frac{1}{2}} \| \partial_2^2 b\|_{H^{s-4}} \; d\tau \\
    \lesssim  & \; \mathcal{E}_3^\frac{1}{2}(t) \mathfrak{E}^\frac{1}{2}(t) \mathcal{B}_3^\frac{1}{2}(t).
  \end{split}
\end{equation}

\quad \\
\noindent
{\bf Type II: Terms containing  the divergence $\nabla \cdot u$ }
~\\~

This part contains three terms $N_4, N_9$ and $N_{12}$. Since $N_4$ and $N_{12}$ have
similar structures, they are estimated at the same time.
\begin{equation}\nonumber
  \begin{split}
    |N_4| + |N_{12}| \lesssim & \; w_k \big\|(\rho| b) \nabla \cdot u \big\|_{H^{s-k+1}} \big\|\partial_2^2 (\rho| b)\big\|_{H^{s-k-1}} \\
    \lesssim & \; w_k \big(\|(\rho| b)\|_{H^2} \|\nabla \cdot u\|_{H^{s-k+1}} + \|(\rho, b)\|_{H^{s-k+1}} \|\nabla \cdot u\|_{H^2} \big)\|\partial_2^2 (\rho| b)\|_{H^{s-k-1}}.
  \end{split}
\end{equation}
Therefore, for $k = 1$,
\begin{equation}\label{1N412}
\begin{split}
\int_0^t |N_4| + |N_{12}| \; d\tau \lesssim & \sup_{0\leq \tau \leq t} \|(\rho| b)\|_{H^2} \int_0^t w_1^\frac{1}{2}\|\nabla \cdot u\|_{H^{s}} w_1^\frac{1}{2}\|\partial_2^2 (\rho| b)\|_{H^{s-2}}\,d\tau\\
 & + \sup_{0\leq \tau \leq t} w_0^\frac{1}{2}\|(\rho| b)\|_{H^s} \int_0^t (1+\tau)^\frac{1}{2}\|\nabla \cdot u\|_{H^2}w_1^\frac{1}{2}\|\partial_2^2 (\rho| b)\|_{H^{s-2}}\,d\tau\\
 \lesssim & \; \mathfrak{E}^\frac{1}{2} (t)\big(\mathcal{E}_1^\frac{1}{2}(t) + \mathcal{F}_1^\frac{1}{2}(t)\big)\big(\mathcal{P}_1^\frac{1}{2}(t) + \mathcal{B}_1^\frac{1}{2}(t)\big) \\
 & + \mathcal{E}_0^\frac{1}{2}(t)\big(\mathcal{E}_2^\frac{1}{2}(t) + \mathcal{F}_2^\frac{1}{2}(t)\big)\big(\mathcal{P}_1^\frac{1}{2}(t) + \mathcal{B}_1^\frac{1}{2}(t)\big).
\end{split}
\end{equation}
For $k = 3$,
\begin{equation}\label{3N412}
\begin{split}
\int_0^t |N_4| + |N_{12}| \; d\tau \lesssim  & \sup_{0\leq \tau \leq t} \|(\rho| b)\|_{H^{s-2}} \int_0^t w_3^\frac{1}{2}\|\nabla \cdot u\|_{H^{s-2}} w_3^\frac{1}{2}\|\partial_2^2 (\rho| b)\|_{H^{s-4}}\,d\tau \\
 \lesssim & \; \mathfrak{E}^\frac{1}{2}(t) \big(\mathcal{E}_3^\frac{1}{2}(t) + \mathcal{F}_3^\frac{1}{2}(t)\big) \big(\mathcal{P}_3^\frac{1}{2}(t) + \mathcal{B}_3^\frac{1}{2}(t)\big).
\end{split}
\end{equation}
Next we deal with $N_9$. It can be written as
\begin{equation}\nonumber
\begin{split}
N_9 = &- w_k \int_{\mathbb{T}^2} \partial^{s-k} \partial_2 \Big( \Big(\frac{P'(\tilde \rho)}{\rho+1} - 1\Big) \nabla \rho \Big) \cdot \partial^{s-k}\partial_2 u \; dx \\
= & \;  w_k\int_{\mathbb{T}^2} \partial^{s-k-1} \partial_2^2 q_1(\rho)  \partial^{s-k+1}  \nabla \cdot u \; dx,
\end{split}
\end{equation}
where $q_1(\rho)$ is defined as before, namely
\begin{equation}\nonumber
q_1(\rho) \triangleq \int_0^\rho \Big(\frac{P'(r + 1)}{r + 1} - 1 \Big) \; dr.
\end{equation}
As explained before,  $q(\rho) \sim \rho^2$. Therefore, for both $k=1$ and $k=3$,
\begin{equation}\label{N9}
  \begin{split}
    \int_0^t |N_9| \; d\tau \lesssim & \int_0^t w_k \|\partial_2^2 q_1(\rho)\|_{H^{s-k-1}} \|\nabla \cdot u\|_{H^{s-k+1}} \; d\tau \\
    \lesssim & \sup_{0\leq \tau \leq t} \|\rho\|_{H^{s-k-1}} \int_0^t w_k^\frac{1}{2} \|\partial_2^2 \rho\|_{H^{s-k-1}} w_k^\frac{1}{2}\|\nabla \cdot u\|_{H^{s-k+1}} \; d\tau \\
    \lesssim & \; \mathfrak{E}^\frac{1}{2}(t)\mathcal{P}_k^\frac{1}{2} (t)\big(\mathcal{E}_k^\frac{1}{2}(t) + \mathcal{F}_k^\frac{1}{2}(t)\big).
  \end{split}
\end{equation}
This finishes the proof of Type II terms.

\quad \\
\noindent
{\bf Type III: Basic good terms}
~\\~

The remaining terms $N_6, N_8, N_{13}$ are called the basic good terms, which are bounded in
this part.  To bound the first term $N_6$, we set $f(\rho) = \frac{\rho}{\rho+1}$ and use \eqref{assum} to obtain
\begin{equation}\nonumber
\begin{split}
  |N_6| =  &\;  w_k \Big|\int_{\mathbb{T}^2} \partial^{s-k -1} \partial_2\Big(\frac{\rho}{\rho + 1} \Delta u \Big)\cdot \partial^{s-k +1} \partial_2 u \; dx \Big| \\
  \lesssim & \; w_k \big(\|\partial_2 f(\rho)\|_{L^\infty} \|\Delta u\|_{H^{s-k -1}} + \|\partial_2 f(\rho)\|_{H^{s-k-1}} \|\Delta u\|_{L^\infty} \big) \|  \partial_2 u\|_{H^{s-k+1}} \\
  & + w_k \big(\|f(\rho)\|_{L^\infty} \|\partial_2 \Delta u\|_{H^{s-k -1}} + \|f(\rho)\|_{H^{s-k-1}} \|\partial_2 \Delta u\|_{L^\infty} \big) \|  \partial_2  u\|_{H^{s-k+1}} \\
  \lesssim & \; w_k^\frac{1}{2} \|\partial_2 \rho\|_{H^{s-k-1}} \|\Delta u\|_{H^{s-k-1}} w_k^\frac{1}{2} \|\partial_2 u\|_{H^{s-k+1}}+ \|\rho\|_{H^{s-k-1}}w_k  \|\partial_2  u\|_{H^{s-k+1}}^2.
\end{split}
\end{equation}
This implies,  for both $k=1$ and $k=3$,
\begin{equation}\label{N6}
  \begin{split}
    \int_0^t |N_6|  \; d\tau \lesssim \; \mathfrak{E}^\frac{1}{2}(t) \mathcal{E}_k(t).
  \end{split}
\end{equation}
By integration by parts,  $N_8$ can be written as
\begin{align*}
|N_8|
= &\; w_k \Big|\int_{\mathbb{T}^2} \partial^{s-k-2} \partial_2^2\Big(\frac{\rho}{\rho + 1}  \nabla^{\bot}\cdot b \Big) \partial^{s-k+2}  u_1 \; dx \Big| \\
\lesssim & \; w_k \big(\|\partial_2^2 \rho \|_{H^{s-k-2}} \|b\|_{H^{s-k}} + \|\rho\|_{H^{s-k-1}} \|\partial_2^2 \nabla^\bot \cdot b\|_{H^{s-k-2}} \big)\\
&\times \big(\|\partial_1 u_1\|_{H^{s-k+1}} + \|\partial_2 u_1\|_{H^{s-k+1}}\big).
\end{align*}
Therefore, for both $k=1, 3$,
\begin{equation}\label{N8}
  \begin{split}
    \int_0^t |N_8|  \; d\tau \lesssim \; \mathfrak{E}^\frac{1}{2}(t) \big(\mathcal{P}_k^\frac{1}{2}(t) +  \mathcal{B}_k^\frac{1}{2}(t)\big)\big(   \mathcal{E}_k^\frac{1}{2}(t) + \mathcal{F}_k^\frac{1}{2}(t) \big).
  \end{split}
\end{equation}
For the last term $N_{13}$, we directly write
\begin{equation}\nonumber
\begin{split}
|N_{13}| = &\;  w_k \Big|\int_{\mathbb{T}^2} \partial^{s-k -1} \partial_2 \Big(\frac{1}{2(\rho + 1)}  \nabla |b|^2 \Big)\cdot \partial^{s-k +1} \partial_2 u \; dx \Big| \\
\leq & \; w_k \Big|\int_{\mathbb{T}^2} \partial^{s-k -1} \partial_2 \Big(\frac{1}{2(\rho + 1)}  \partial_2 |b|^2 \Big) \partial^{s-k +1} \partial_2 u_2 \; dx \Big| \\
& + \; w_k \Big|\int_{\mathbb{T}^2} \partial^{s-k -2} \partial_2^2 \Big(\frac{1}{2(\rho + 1)}  \partial_1 |b|^2 \Big) \partial^{s-k +2} u_1 \; dx \Big| \\
\lesssim & w_k   (1+\|\rho\|_{H^{s-k}}) \|\partial_2^2 b\|_{H^{s-k-1}} \|b\|_{H^{s-k}}  \|\partial_2 u\|_{H^{s-k+1}} \\
& + w_k\Big(  \|\partial_2 \rho\|_{H^{s-k-1}} \|b\|_{H^{s-k}}^2 + (1 + \|\rho\|_{H^{s-k-2}}) \|\partial_2^2 b\|_{H^{s-k-1}}\|b\|_{H^{s-k}} \Big) \\
& \times\big(\|\partial_1 u_1\|_{H^{s-k+1}} + \|\partial_2 u_1\|_{H^{s-k+1}} \big)
\end{split}
\end{equation}
Hence, for $k=1$ and $k=3$,
\begin{equation}\label{N13}
  \begin{split}
    \int_0^t |N_{13}|  \; d\tau  \lesssim \; \mathcal{B}_k^\frac{1}{2} (t) \mathfrak{E}^\frac{1}{2}(t) \mathcal{E}_k^\frac{1}{2}(t) + \big( \mathcal{P}_k^\frac{1}{2}(t)\mathfrak{E}(t)+\mathcal{B}_k^\frac{1}{2} (t)\mathfrak{E}^\frac{1}{2}(t) \big) \big( \mathcal{E}_k^\frac{1}{2}(t)+\mathcal{F}_k^\frac{1}{2}(t) \big),
  \end{split}
\end{equation}
where we have used \eqref{assum}.
This completes the proof of Type III terms.

\vskip .1in
Finally we combine the bounds above to obtain the desired inequalities. For $k=1$,
we integrate \eqref{N} in time and use the bounds in \eqref{1N1}, \eqref{N2}, \eqref{1N3510}, \eqref{N7}, \eqref{1N11}, \eqref{1N412}, \eqref{N9}, \eqref{N6}, \eqref{N8} and \eqref{N13} to obtain
$$ \mathcal{E}_1(t) \lesssim \; \mathcal{E}_0^{\frac{1}{2}} (t) \big( \mathcal{E}_1^{\frac{1}{2}}(t) + \mathcal{P}_1^{\frac{1}{2}}(t) +\mathcal{B}_1^{\frac{1}{2}}(t)  \big)+ \mathcal{E}^{\frac{3}{2}}_{total}(t). $$
All intermediate energies have been controlled by $\mathcal{E}_{total}$ through interpolation.

\vskip .1in
Similarly we obtain the bound for $\mathcal{E}_3(t)$ after we invoke the estimates \eqref{3N1}, \eqref{N2}, \eqref{3N3510}, \eqref{N7}, \eqref{3N11}, \eqref{3N412}, \eqref{N9}, \eqref{N6}, \eqref{N8} and \eqref{N13},
$$\mathcal{E}_3(t) \lesssim \; \big( \mathcal{E}_1^{\frac{1}{2}}(t) + \mathcal{P}_1^{\frac{1}{2}} (t)+\mathcal{B}_1^{\frac{1}{2}} (t) \big)\big( \mathcal{E}_3^{\frac{1}{2}}(t) + \mathcal{P}_3^{\frac{1}{2}}(t) +\mathcal{B}_3^{\frac{1}{2}} (t) \big) + \mathcal{E}^{\frac{3}{2}}_{total}(t). $$
This completes the proof of Proposition \ref{strongDis}.
\end{proof}

\vskip .1in
Next we turn to the proof of Proposition \ref{strongDis2}.

\begin{proof}[Proof of Proposition \ref{strongDis2}] The proof makes use of the wave structure
	 in (\ref{ww}).
The equation of $u_2$ in \eqref{mhd1} can be rewritten as
\begin{equation}\nonumber
\begin{split}
\partial_2 \rho = & \; \Big( 1- \frac{1}{\rho + 1} P'(\rho + 1)\Big) \partial_2 \rho - \partial_t u_2 + \frac{1}{\rho+1} \Delta u_2 -
u \cdot \nabla u_2  \\
&\quad + \frac{1}{\rho + 1} b \cdot \nabla b_2 - \frac{1}{2(\rho + 1)} \partial_2 |b|^2.
\end{split}
\end{equation}
Applying $\partial_2$ derivative to above equality and taking inner product with $\partial_2^2 \rho$ in $\dot H^{s-1-k} \; (k=1,3)$ yields

\begin{equation}\nonumber
\begin{split}
\big\|\partial_2^2 \rho \big\|_{\dot H^{s-1-k}}^2 = \sum_{i = 1}^{6} J_i,
\end{split}
\end{equation}
where,
\begin{align*}
    J_1 =& -\int_{\mathbb{T}^2}  \p^{s-1-k}\partial_t \partial_2 u_2\, \p^{s-1-k}\partial_2^2 \rho\, dx,\\
    J_2 =& \int_{\mathbb{T}^2}    \p^{s-1-k}\partial_2 \Big( \big( 1- \frac{1}{\rho + 1} P'(\rho + 1) \big) \partial_2 \rho \Big)\,  \p^{s-1-k}\partial_2^2 \rho\,dx,\\
    J_3 =& \int_{\mathbb{T}^2}    \p^{s-1-k}\partial_2 \Big(\frac{1}{\rho+1} \Delta u_2 \Big)\,  \p^{s-1-k}\partial_2^2 \rho\,dx,\\
    J_4 =& -\int_{\mathbb{T}^2}   \p^{s-1-k}\partial_2(u \cdot \nabla u_2)\, \p^{s-1-k}\partial_2^2 \rho\,dx,\\
    J_5 =& \int_{\mathbb{T}^2}   \p^{s-1-k}\partial_2(\frac{1}{\rho + 1} b \cdot \nabla b_2)\, \p^{s-1-k}\partial_2^2 \rho\,dx,\\
    J_6 =& -\int_{\mathbb{T}^2}  \p^{s-1-k} \partial_2\Big(\frac{1}{2(\rho + 1)} \partial_2 |b|^2 \Big)\, \p^{s-1-k}\partial_2^2 \rho\,dx.
 \end{align*}
The bounds for $J_2 - J_6$ can be derived directly,

\begin{equation}\nonumber
  \begin{split}
    \sum_{i = 2}^6 |J_i|
    \lesssim & \; \big(\|\rho\|_{H^{s-1-k}} \|\partial_2^2 \rho\|_{H^{s-1-k}} + \|\partial_2 \rho\|_{H^{s-1-k}}^2 \big) \|\partial_2 ^2 \rho\|_{H^{s-1-k}}\\
    & + \|\partial_2 \rho\|_{H^{s-1-k}} \big(\|\Delta u\|_{H^{s-1-k}} + \|b\|_{H^{s-1-k}} \|\nabla b\|_{H^{s-1-k}}\big)\|\partial_2 ^2 \rho\|_{H^{s-1-k}}\\
    & + \big(1+\|\rho\|_{H^{s-1-k}}\big)\big(\|\Delta \partial_2 u\|_{H^{s-1-k}} + \|\partial_2 b\|_{H^{s-1-k}} \|\nabla b \|_{H^{s-1-k}} \\
    & + \|b_1\|_{H^{s-1-k}}\|\partial_1 \partial_2 b\|_{H^{s-1-k}}  + \|b\|_{H^{s-1-k}} \|\partial_2^2 b\|_{H^{s-1-k}}\big) \|\partial_2^2 \rho\|_{H^{s-1-k}} \\
    & + \|  u \|_{H^{s-k}}\| \partial_2 \nabla u \|_{H^{s-1-k}}\|\partial_2^2 \rho\|_{H^{s-1-k}}.
  \end{split}
\end{equation}
Applying Poinc\'{a}re's inequality and then integrating in time yields, for $k=1$ and $k=3$,
\begin{equation}\label{J23456}
\begin{split}
   \sum_{i=2}^6 \int_0^t w_k |J_i| \; d\tau \lesssim \; & \mathfrak{E}^{\frac{1}{2}}(t)\mathcal{P}_k(t)+ \mathcal{E}_k^{\frac{1}{2}}(t)\mathcal{P}_k(t)+ \mathfrak{E}(t)\mathcal{P}_k(t) + \mathcal{E}_k^{\frac{1}{2}}(t)\mathcal{P}_k^{\frac{1}{2}}(t)\\
   & + \mathcal{B}_k^{\frac{1}{2}}(t)\mathfrak{E}^{\frac{1}{2}}(t) \mathcal{P}_k^{\frac{1}{2}}(t)+ \mathcal{B}_k^{\frac{1}{2}}(t)\mathcal{E}_k^{\frac{1}{2}}(t) \mathcal{P}_k^{\frac{1}{2}}(t) \\
   & + \mathfrak{E}^{\frac{1}{2}}(t) \mathcal{B}_k^{\frac{1}{2}}(t)\mathcal{P}_k^{\frac{1}{2}}(t)+\mathfrak{E}^{\frac{1}{2}}(t) \mathcal{E}_k^{\frac{1}{2}}(t)\mathcal{P}_k^{\frac{1}{2}}(t).
\end{split}
\end{equation}
Now we come back to deal with the first term $J_1$. It can be written into the following form.
\begin{equation}\nonumber
  \begin{split}
    J_1 =& \; -\partial_t \int_{\mathbb{T}^2}   \p^{s-1-k}\partial_2 u_2\, \p^{s-1-k}\partial_2^2 \rho\,dx + \int_{\mathbb{T}^2}   \p^{s-1-k}\partial_2 u_2\, \p^{s-1-k}\partial_2^2 \p_t\rho\,dx\\
        =& \; -\partial_t\int_{\mathbb{T}^2}   \p^{s-1-k}\partial_2 u_2\, \p^{s-1-k}\partial_2^2 \rho\,dx\\
        &- \int_{\mathbb{T}^2}   \p^{s-1-k}\partial_2 u_2\,\p^{s-1-k} \partial_2^2\big(u\cdot \nabla \rho + (\rho  + 1) \nabla \cdot u\big)\,dx.
  \end{split}
\end{equation}
Therefore,
\begin{equation}\label{4.10}
  \begin{split}
    & \int_0^t w_k(\tau) J_1 \; d\tau \\
    =& \;- w_k\int_{\mathbb{T}^2}   \p^{s-1-k}\partial_2 u_2(t)\, \p^{s-1-k}\partial_2^2 \rho(t)\,dx +
    \int_{\mathbb{T}^2}   \p^{s-1-k}\partial_2 u_2(0)\, \p^{s-1-k}\partial_2^2 \rho(0)\,dx\\
     &+ \int_0^t (k-\sigma) w_{k-1} \int_{\mathbb{T}^2}   \p^{s-1-k}\partial_2 u_2\, \p^{s-1-k}\partial_2^2 \rho\,dx\;d\tau\\
    & - \int_0^t w_k \int_{\mathbb{T}^2}   \p^{s-1-k}\partial_2 u_2\,\p^{s-1-k} \partial_2^2\big(u\cdot \nabla \rho + (\rho  + 1) \nabla \cdot u\big)\,dx \;d\tau.
  \end{split}
\end{equation}
The first three parts on the right-hand side of \eqref{4.10} is bounded by
\begin{equation}\label{J11}
  \begin{split}
   &\sup_{0\leq \tau \leq t} w_k\|\partial_2 u_2\|_{H^{s-1-k}} \|\partial_2^2 \rho\|_{ H^{s-1-k}}
   + \|\partial_2 u_2(0)\|_{H^{s-1-k}} \|\partial_2^2 \rho(0)\|_{ H^{s-1-k}}\\
    &+   \int_0^t w_{k-1}^\frac{1}{2} \|\partial_2 u_2\|_{H^{s-1-k}}w_{k-1}^\frac{1}{2}\|\partial_2^2 \rho\|_{ H^{s-1-k}} \;d\tau \\
     & \lesssim  \; \mathcal{E}_k(t)  +   \mathcal{E}_k(0)       +\mathcal{E}_k^\frac{1}{2}(t) \mathcal{P}_k^\frac{1}{2}(t).
  \end{split}
\end{equation}
The last part in \eqref{4.10} admits the bound
\begin{equation}\label{J12}
  \begin{split}
    & \int_0^t w_k \int_{\mathbb{T}^2}   \p^{s-1-k}\partial_2 \nabla u_2\, \p^{s-1-k}\partial_2^2(\rho u)\,dx\,d\tau\\
    &\quad - \int_0^t w_k \int_{\mathbb{T}^2}   \p^{s-1-k}\partial_2  u_2\, \p^{s-1-k}\partial_2^2\nabla \cdot u\,dx\;d\tau \\
    \lesssim & \; \mathfrak{E}^\frac{1}{2} (t)\mathcal{E}_k(t) + \mathfrak{E}^\frac{1}{2}(t) \mathcal{P}_k^\frac{1}{2} (t)\mathcal{E}_k^\frac{1}{2}(t)+  \mathcal{E}_k (t).
  \end{split}
\end{equation}
Combining the estimates for $|J_1|$ to $|J_6|$, namely \eqref{J23456}, \eqref{J11} and \eqref{J12}, applying Young's inequality we then complete the proof of Proposition \ref{strongDis2}.
\end{proof}

\vskip .1in
Finally we prove Proposition \ref{strongDis3}, which shows the enhanced dissipation in the vertical
derivatives of $b$.

\begin{proof}[Proof of Proposition \ref{strongDis3}]
We rewrite the equation of $u_1$ in \eqref{mhd1} as follows
\begin{equation}\label{u1}
\begin{split}
  \nabla^{\bot} \cdot b = & \; \frac{\rho}{\rho+1}\nabla^{\bot} \cdot b + \partial_t u_1 - \frac{1}{\rho+1}\Delta u_1 +  u \cdot \nabla u_1 \\
  & + \frac{1}{\rho+1}\partial_1 P -\frac{1}{\rho+1} b \cdot \nabla b_1 + \frac{1}{2(\rho+1)} \partial_1 |b|^2.
\end{split}
\end{equation}
Applying $\p_2^2$ to (\ref{u1}) and then taking the inner product in $\dot H^{s-2-k}  (k=1,3)$ yields
\begin{equation}\nonumber
  \begin{split}
    \big\|\partial_2^2 \nabla^{\bot} \cdot b \big\|^2_{\dot H^{s-2-k}} = \sum_{i = 1}^7 K_i,
  \end{split}
\end{equation}
where
\begin{align*}
    K_1 = & \int_{\mathbb{T}^2}   \p^{s-2-k}\partial_2^2 \partial_t u_1\,  \p^{s-2-k}\partial_2^2 \nabla^{\bot} \cdot b\,dx,\\
    K_2 =&  \int_{\mathbb{T}^2}   \p^{s-2-k}\partial_2^2 (\frac{\rho}{\rho+1}\nabla^{\bot} \cdot b)\, \p^{s-2-k}\partial_2^2 \nabla^{\bot} \cdot b\,dx,\\
    K_3 = &- \int_{\mathbb{T}^2}    \p^{s-2-k}\partial_2^2 (\frac{1}{\rho+1} \Delta u_1)\, \p^{s-2-k}\partial_2^2 \nabla^{\bot} \cdot b\,dx,\\
    K_4 =&  \int_{\mathbb{T}^2}    \p^{s-2-k}\partial_2^2 (u \cdot \nabla u_1)\, \p^{s-2-k}\partial_2^2 \nabla^{\bot} \cdot b\,dx,\\
    K_5 = & \int_{\mathbb{T}^2}    \p^{s-2-k}\partial_2^2 (\frac{1}{\rho+1} \partial_1 P)\, \p^{s-2-k}\partial_2^2 \nabla^{\bot} \cdot b\,dx,\\
    K_6 =&  -\int_{\mathbb{T}^2}   \p^{s-2-k}\partial_2^2 (\frac{1}{\rho+1} b\cdot \nabla b_1)\, \p^{s-2-k}\partial_2^2 \nabla^{\bot} \cdot b\,dx,\\
    K_7 = & \int_{\mathbb{T}^2}   \p^{s-2-k}\partial_2^2 (\frac{1}{2(\rho+1)} \partial_1 |b|^2)\, \p^{s-2-k}\partial_2^2 \nabla^{\bot} \cdot b\,dx.
  \end{align*}
$K_2$ through $ K_7$ can be bounded by
\begin{equation}\nonumber
  \begin{split}
   \sum_{i = 2}^7 & |K_i|
    \lesssim  \; \big(\|\rho\|_{H^{s-2-k}} \|\partial_2^2 \nabla^{\bot} \cdot b\|_{H^{s-2-k}} + \|\partial_2 \rho\|_{H^{s-2-k}}
    \|\partial_2 \nabla^{\bot} \cdot b\|_{H^{s-2-k}}\\
     & + \|\partial_2^2 \rho\|_{H^{s-2-k}} \|\nabla^{\bot} \cdot b\|_{H^{s-2-k}} \big) \|\partial_2^2 \nabla^{\bot} \cdot b\|_{H^{s-2-k}}\\
    & + \sum_{g = \Delta u_1, \partial_1 P, b \cdot \nabla b_1, \partial_1 |b|^2} \Big( \big\|\frac{1}{\rho + 1} \big\|_{H^{s-2-k}} \|\partial_2^2 g\|_{H^{s-2-k}}\\
    &\qquad\qquad\qquad \qquad\qquad  + \big\|\partial_2 \frac{1}{\rho + 1}\big\|_{H^{s-2-k}}
    \|\partial_2 g\|_{H^{s-2-k}}\\
     &\qquad\qquad\qquad \qquad\qquad  + \big\|\partial_2^2 \frac{1}{\rho + 1}\big\|_{H^{s-2-k}} \|g\|_{H^{s-2-k}}\Big)\|\partial_2^2 \nabla^{\bot} \cdot b\|_{H^{s-2-k}} \\
     & + \| u \|_{H^{s-1-k}}\|\partial_2^2  u \|_{H^{s-1-k}}\|\partial_2^2 \nabla^{\bot} \cdot b\|_{H^{s-2-k}}.
  \end{split}
\end{equation}
Applying Poinc\'{a}re's inequality, we obtain, for $k=1$ and $k=3$,
\begin{equation}\label{K234567}
  \begin{split}
    \sum_{i = 2}^7 \int_0^t w_k  |K_i| \; d\tau
    \lesssim &   \; \mathfrak{E}^\frac{1}{2}(t) \mathcal{B}_k(t) + \mathfrak{E}^\frac{1}{2}(t)\mathcal{P}_k^\frac{1}{2}(t)  \mathcal{B}_k^\frac{1}{2}(t)    \\
    & + \mathcal{E}_k^\frac{1}{2}(t)  \mathcal{B}_k^\frac{1}{2}(t) + \mathcal{P}_k^\frac{1}{2}(t)  \mathcal{B}_k^\frac{1}{2}(t) +\mathfrak{E}^\frac{1}{2}(t) \mathcal{B}_k(t) \\
    &+   \mathcal{P}_k^\frac{1}{2}(t)   \big(  \mathfrak{E}(t)+ \mathfrak{E}^\frac{1}{2}(t)      \big)  \mathcal{B}_k^\frac{1}{2}(t) + \mathfrak{E}^\frac{1}{2}(t)  \mathcal{E}_k^\frac{1}{2}(t)  \mathcal{B}_k^\frac{1}{2}(t).
  \end{split}
\end{equation}
We now focus on the first term  $K_1$. We can rewrite it as
\begin{equation}\nonumber
  \begin{split}
    & \; \frac{d}{dt} \int_{\mathbb{T}^2}   \p^{s-2-k}\partial_2^2 u_1\, \p^{s-2-k}\partial_2^2 \nabla^{\bot} \cdot b\,dx- \int_{\mathbb{T}^2}   \p^{s-2-k}\partial_2^2 u_1\,
    \p^{s-2-k}\partial_2^2 \nabla^{\bot} \cdot \p_t b\,dx\\
    = & \; \frac{d}{dt} \int_{\mathbb{T}^2}   \p^{s-2-k}\partial_2^2 u_1\, \p^{s-2-k}\partial_2^2 \nabla^{\bot} \cdot b\,dx\\
    &- \int_{\mathbb{T}^2}   \p^{s-2-k}\partial_2^2 u_1\,\p^{s-2-k}
    \partial_2^2 \big(\Delta u_1 + \nabla^{\bot} \cdot (-u\cdot \nabla b + b \cdot \nabla u - b \nabla \cdot u) \big)\,dx.
  \end{split}
\end{equation}
It then implies that, for $k=1$ and $k=3$,
\begin{equation}\nonumber
  \begin{split}
    & \int_0^t w_k \; K_1\; d\tau  \\
    =& \; w_k \int_{\mathbb{T}^2} \p^{s-2-k}\partial_2^2 u_1(t)\, \p^{s-2-k}\partial_2^2 \nabla^{\bot} \cdot b(t)\,dx\\ &- \int_{\mathbb{T}^2} \p^{s-2-k}\partial_2^2 u_1(0)\, \p^{s-2-k}\partial_2^2 \nabla^{\bot} \cdot b(0)\,dx\\
    &-  \int_0^t  (k-\sigma) w_{k-1} \int_{\mathbb{T}^2}  \p^{s-2-k}\partial_2^2 u_1\, \p^{s-2-k}\partial_2^2 \nabla^{\bot} \cdot b\,dx\; d\tau \\
     & -\int_0^t w_k \int_{\mathbb{T}^2}  \p^{s-2-k}\partial_2^2 u_1\,\p^{s-2-k}
    \partial_2^2 \big(\Delta u_1 + \nabla^{\bot} \cdot (-u\cdot \nabla b + b \cdot \nabla u - b \nabla \cdot u) \big)\,dx \; d\tau.
  \end{split}
\end{equation}
The first three parts above can be controlled by
\begin{equation}\label{K11}
  \begin{split}
& \sup_{0\leq \tau \leq t} w_k\|\partial_2^2 u\|_{H^{s-2-k}} \|\partial_2^2 \nabla^{\bot} \cdot b\|_{H^{s-2-k}} +\|\partial_2^2 u(0)\|_{H^{s-2-k}} \|\partial_2^2 \nabla^{\bot} \cdot b(0)\|_{H^{s-2-k}}  \\
&+\int_0^t w_{k-1}^\frac{1}{2} \|\partial_2^2 u\|_{H^{s-2-k}} w_{k-1}^\frac{1}{2} \|\partial_2^2 \nabla^{\bot} \cdot b\|_{H^{s-2-k}} \; d\tau\\
    &\lesssim \mathcal{E}_k(t) + \mathcal{E}_k(0) +  \mathcal{E}_k^\frac{1}{2}(t)\mathcal{B}_k^\frac{1}{2}(t).
  \end{split}
\end{equation}
To bound the last part, we use the inequality $\|\partial_2^2 b\|_{H^{s-1-k}} \lesssim \|\partial_2^2 \nabla^{\bot} \cdot b\|_{H^{s-2-k}}$ to obtain
\begin{equation}\label{K12}
  \begin{split}
    & \int_0^t w_ k\|\partial_2^2 \nabla u\|_{H^{s-2-k}}^2   \; d\tau   +  \int_0^t   w_k\|\partial_2^2 \nabla u\|_{H^{s-2-k}}
    \big(\|u\|_{H^{s-1-k}} \|\partial_2^2 b\|_{H^{s-1-k}} \\
    &+ \|\partial_2 u\|_{H^{s-1-k}} \|\partial_2 b\|_{H^{s-1-k}}
    + \|\partial_2^2 u\|_{H^{s-1-k}} \|b\|_{H^{s-1-k}}\big) \; d\tau\\
    & \lesssim  \mathcal{E}_k (t) + \mathcal{E}_k^\frac{1}{2}(t)\big(\mathfrak{E}^\frac{1}{2}(t)\mathcal{B}_k^\frac{1}{2}(t) + \mathcal{E}_k^\frac{1}{2} (t)\mathfrak{E}^\frac{1}{2}(t)\big).
  \end{split}
\end{equation}
Finally, taking all the estimates above into consideration, namely \eqref{K234567}, \eqref{K11} and \eqref{K12}, applying Young's inequality we will complete the proof of Proposition \ref{strongDis3}.
\end{proof}

\vskip .3in
\section{Dissipative structure of divergence part}
\label{divp}

This section presents upper bounds on $\tilde{\mathcal{A}}$, $\mathcal{A}_k$ and $\mathcal{F}_k$.
As aforementioned in the introduction, the coupling of $\Omega$ (defined in (\ref{OO})) and $u_1$
generates enhanced dissipation. The estimates presented in this section reflect this extra
regularization. In particular, the bound for $\mathcal{F}_k$ verifies the regularity and
decay properties of $\p_1 u_1$. Together with the bounds on $\p_2 u_2$ in $\mathcal{E}_k$, the estimates presented here confirm the extra dissipation of $\na\cdot u$.

\vskip .1in
The main bounds are presented in the following two propositions.

\begin{proposition}\label{A}
	Let $\mathcal{E}_{total}(t)$ be the total energy defined in \eqref{total}. Then, for any $t>0$, 	
\begin{equation}\nonumber
\begin{split}
\tilde{\mathcal{A}}(t)  \lesssim &\; \mathcal{E}_0 (t) +  \mathcal{E}^{\frac{3}{2}}_{total}(t),     \\
\mathcal{A}_k(t) \lesssim  &\;  \mathcal{E}_k (t) + \mathcal{F}_k(t) +\mathcal{E}^{\frac{3}{2}}_{total}(t),  \qquad  k=1,3.
\end{split}
\end{equation}
\end{proposition}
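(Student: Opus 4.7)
The plan is to exploit the wave-type coupling between $u_1$ and $\Omega$ highlighted in the introduction. Rewriting \eqref{equ1} as
$$\Omega \;=\; \partial_t u_1 + u\cdot\nabla u_1 - \Delta u_1 + \frac{\rho}{\rho+1}\bigl(\Delta u_1 + \Omega\bigr),$$
I would compute $\|\Omega\|_{\dot H^{s-k}}^2$ by taking the $L^2$ inner product of $\partial^{s-k}\Omega$ with $\partial^{s-k}$ of the right-hand side, multiplying by the weight $w_k$, and integrating on $[0,t]$. The $-\Delta u_1$ contribution is split as $-\partial_1^2 u_1-\partial_2^2 u_1$; a Young-type inequality absorbs an $\varepsilon$-fraction of $\int_0^t w_k\|\Omega\|_{H^{s-k}}^2$ on the left and reduces the rest to $\int_0^t w_k(\|\partial_1^2 u_1\|_{H^{s-k}}^2+\|\partial_2^2 u_1\|_{H^{s-k}}^2)\,d\tau$, which is controlled by $\mathcal{F}_k$ and $\mathcal{E}_k$ respectively (and jointly by $\mathcal{E}_0$ via $\int_0^t w_0\|\nabla u\|_{H^s}^2\,d\tau$ in the $\tilde{\mathcal A}$ case).

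The main work is the $\partial_t u_1$ term. I would integrate by parts in time,
$$\int_0^t\! w_k\!\!\int\!\partial^{s-k}\Omega\,\partial^{s-k}\partial_\tau u_1 \,dx d\tau = \Big[w_k\!\!\int\!\partial^{s-k}\Omega\,\partial^{s-k} u_1\,dx\Big]_0^t - \int_0^t\! w_k'\!\!\int\!\partial^{s-k}\Omega\,\partial^{s-k} u_1 - \int_0^t\! w_k\!\!\int\!\partial^{s-k}\partial_\tau\Omega\,\partial^{s-k} u_1,$$
then feed in \eqref{eqomega} for $\partial_\tau\Omega$. The boundary terms are controlled by $\sup w_k(\|\Omega\|_{H^{s-k}}^2+\|u_1\|_{H^{s-k}}^2)$, which fits into $\mathcal{F}_k+\mathcal{E}_k$ after a Poincaré argument writing $u_1\sim \partial_1 u_1+\partial_2 u_1$ modulo its mean; the $w_k'$ piece is handled analogously. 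The crucial gain comes from the linear part of $\partial_\tau\Omega$: pairing $-\int\partial^{s-k}(2\partial_1^2 u_1+\partial_2^2 u_1+\partial_1\partial_2 u_2)\,\partial^{s-k}u_1\,dx$ and integrating in $x$ by parts yields $2\|\partial_1 u_1\|_{\dot H^{s-k}}^2+\|\partial_2 u_1\|_{\dot H^{s-k}}^2+\int\partial^{s-k}\partial_2 u_1\,\partial^{s-k}\partial_1 u_2$, each of which is integrable against $w_k$ via Poincaré inequalities and belongs to $\mathcal{F}_k$ or $\mathcal{E}_k$ (for the cross-term I would transfer $\partial_1$ to $u_2$ to expose $\partial_1\partial_2 u_2\in\mathcal{E}_k$).

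All remaining contributions are genuinely nonlinear: the $u\cdot\nabla u_1$ and $\frac{\rho}{\rho+1}(\Delta u_1+\Omega)$ pieces from the $u_1$ equation, together with the dozen or so trilinear and quartic terms in \eqref{eqomega}. Each is a product of at most three small factors and can be handled by Lemmas \ref{prop0}--\ref{prop1}, splitting horizontal against vertical derivatives and distributing the weight $w_k$ across factors through the identities $w_0=w_{-1}^{1/2}w_1^{1/2}$ and $w_2=w_1^{1/2}w_3^{1/2}$ that have already served in Sections \ref{basic}--\ref{Strong}; the upshot is a $\mathcal{E}_{total}^{3/2}(t)$ bound on all of them. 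The $\tilde{\mathcal{A}}$ estimate is proved in parallel with $k=0$ and weight $w_0$: there is no $\mathcal{F}_0$, but the $\int_0^t w_0\|\nabla u\|_{H^s}^2\,d\tau$ piece of $\mathcal{E}_0$ alone absorbs both second-derivative contributions at this level, while the boundary data $\sup w_0\|\Omega\|_{H^{s-1}}^2$ is dominated by $\sup w_0(\|b\|_{H^s}^2+\|\rho\|_{H^s}^2)+\mathcal{E}_{total}^{3/2}\leq\mathcal{E}_0+\mathcal{E}_{total}^{3/2}$.

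The anticipated main obstacle is the sheer bookkeeping in the nonlinear pool generated by \eqref{eqomega}: many terms pair $\partial^{s-k}u_1$ with cubic or quartic quantities in $(b,\rho,\nabla u)$, and one must always arrange the Leibniz splitting so that at least one factor carries a \emph{good} derivative---either a $\partial_1$ on $u_1$ (to enter $\mathcal{F}_k$), a $\partial_2$ on $\rho$ or $\nabla^\perp\!\cdot b$ (to enter $\mathcal{P}_k$ or $\mathcal{B}_k$), or a $\nabla\partial_2 u$ (to enter $\mathcal{E}_k$), while the low-derivative factor is controlled in $L^\infty$ via $\mathfrak{E}$. Getting the endpoint weight $w_3$ case of $\mathcal{A}_3$ to close will be the tightest, since the growth of $w_3$ must be balanced against the decay absorbed into $\int_0^t w_{-1}\|\cdot\|^2$ pieces of $\mathcal{E}_0$.
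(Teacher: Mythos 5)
Your proposal mirrors the paper's proof closely: the paper likewise takes the $\dot H^{s_0}$ inner product of the rearranged $u_1$-equation with $\Omega$, integrates the $\partial_t u_1$ term by parts in time, substitutes $\partial_t\Omega$ from \eqref{eqomega} so the linear part yields $\|\partial_1 u_1\|^2$, $\|\partial_2 u_1\|^2$, and a harmless cross term with $\partial_2 u_2$, and disposes of the nonlinear residue via Lemmas \ref{prop0}--\ref{prop1} and the weight identities. The only cosmetic difference is that you split the linear $-\Delta u_1$ from the nonlinear $\frac{\rho}{\rho+1}\Delta u_1$ whereas the paper bounds $-\frac{1}{\rho+1}\Delta u_1$ as a single term $M_3$, and your explicit observation that $\sup w_0\|\Omega\|_{H^{s-1}}^2\lesssim\mathcal{E}_0+\mathcal{E}_{total}^{3/2}$ via the definition of $\Omega$ (so the temporal boundary term stays in $\mathcal{E}_0$ rather than requiring $\mathcal{F}_1$) is a detail the paper leaves implicit.
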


\begin{proposition}\label{F}
	Let $\mathcal{E}_{total}(t)$ be the total energy defined in \eqref{total}. Then, for any $t>0$,
	\begin{equation}\nonumber
		\begin{split}
			\mathcal{F}_k(t) \lesssim \;    \mathcal{E}_0(t) +\tilde{\mathcal{A}}(t) +  \mathcal{E}_k^\frac{1}{2} (t) \mathcal{A}_k^\frac{1}{2}(t) + \mathcal{E}_{total}^{\frac{3}{2}}(t) , \qquad k=1,3.
		\end{split}
	\end{equation}
\end{proposition}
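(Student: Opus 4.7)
\emph{Approach.} The plan is to exploit the wave-like coupling between $u_1$ and $\Omega$ encoded in \eqref{equ1} and \eqref{eqomega}, whose linearization reads $\partial_t u_1 = \Delta u_1 + \Omega$ and $\partial_t \Omega = 2\partial_1^2 u_1 + \partial_2^2 u_1 + \partial_1\partial_2 u_2$. Multiplying the first equation (after applying $\partial_j$ for $j=1,2$) by $\partial_j u_1$ and the second by $\Omega$ and adding, the cross terms $\int \partial_j\Omega\,\partial_j u_1$ cancel; what survives is the dissipation $2\|\nabla\partial_1 u_1\|^2 + \|\nabla\partial_2 u_1\|^2$ and the residual linear coupling $-\int \partial_1\Omega\,\partial_2 u_2$. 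This identity, applied with $\partial^{s-k}$ and the time weight $w_k$ for $k=1,3$, produces both the sup-control of $\|\partial_1 u_1\|^2_{H^{s-k}}+\|\Omega\|^2_{H^{s-k}}$ and the dissipation $\int w_k\|\nabla\partial_1 u_1\|^2_{H^{s-k}}\,d\tau$ appearing in $\mathcal F_k$.

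\emph{Energy identity and linear terms.} Concretely, I would assemble
\[\frac{d}{dt}\Bigl\{w_k\bigl(\tfrac12\|\Omega\|_{\dot H^{s-k}}^2+\|\partial_1 u_1\|_{\dot H^{s-k}}^2+\tfrac12\|\partial_2 u_1\|_{\dot H^{s-k}}^2\bigr)\Bigr\}+w_k\bigl(2\|\nabla\partial_1 u_1\|_{\dot H^{s-k}}^2+\|\nabla\partial_2 u_1\|_{\dot H^{s-k}}^2\bigr) = R,\]
where $R$ decomposes into the weight-drift $(k-\sigma)w_{k-1}(\cdots)$, the linear coupling $-2w_k\langle\partial^{s-k}\partial_1\Omega,\partial^{s-k}\partial_2 u_2\rangle$, and nonlinear remainders. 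Cauchy-Schwarz in time bounds the linear coupling by $\mathcal{A}_k^{1/2}\mathcal{E}_k^{1/2}$, which gives the third term of the proposition. For $k=1$ the drift is $\int_0^t w_0\bigl(\|\Omega\|_{H^{s-1}}^2+\|\nabla u\|_{H^s}^2\bigr)d\tau\lesssim \tilde{\mathcal{A}}+\mathcal{E}_0$. For $k=3$ the drift involves $w_2$, which I would treat via H\"older interpolation using the identities $w_2=(w_1w_3)^{1/2}$ and $w_2=w_0^{1/3}w_3^{2/3}$, so that, e.g., $\int_0^t w_2\|\Omega\|_{H^{s-3}}^2\,d\tau\lesssim \tilde{\mathcal{A}}^{1/3}\mathcal{A}_3^{2/3}$; a Young inequality plus Proposition \ref{A} then absorbs the $\mathcal{A}_3$-factor into the remainder $\mathcal{E}_{total}^{3/2}$ using smallness of $\mathcal{E}_{total}$ (and the $\partial_2 u_1$ piece of the drift is absorbed directly into $\mathcal{E}_0/\mathcal{E}_k$).

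\emph{Nonlinear terms and main obstacle.} The bulk of the work lies in estimating the nonlinear source in \eqref{eqomega} --- in particular the quadratic and cubic-in-$b$ expressions such as $b\cdot\nabla(b\cdot\nabla u_1)$, $\partial_1(b\cdot(b\cdot\nabla u))$, $\nabla^\perp\cdot(b\nabla\cdot u)$, together with the transport piece $u\cdot\nabla\Omega$ and the many $\nabla\cdot u$ terms. Following the pattern of Sections \ref{basic}--\ref{Strong}, I would classify them into transport-type, $\nabla\cdot u$-type, and genuinely cubic terms, and bound them using Lemmas \ref{prop0}, \ref{prop1} together with the weight decompositions $w_0=w_{-1}^{1/2}w_1^{1/2}$, $w_2=w_1^{1/2}w_3^{1/2}$, and Poincar\'e's inequality (which, since $\int_{\mathbb T^2}b\,dx=0$, converts $\|b_1\|$ into $\|\partial_2 b\|$ to gain extra decay). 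Each such term then becomes a product of three small pieces drawn from $\mathcal{E}_{total}$ and contributes to $\mathcal{E}_{total}^{3/2}$. The hardest step, and the one I anticipate as the main obstacle, is the control of the cubic-in-$b$ terms carrying two derivatives on $b$: these force the use of the vertical-derivative gain encoded in $\mathcal{B}_k$ (i.e.\ the $\|\partial_2^2 b\|_{H^{s-2-k}}$-bound), and the weight $w_k$ must be split as $w_k^{1/2}\cdot w_k^{1/2}$ between $\|\partial_2^2 b\|$ on one side and the dissipation $\|\nabla\partial_1 u_1\|_{H^{s-k}}$ on the other in order to close the estimate, exactly as in the proof of Proposition \ref{strongDis3}.
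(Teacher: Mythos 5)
Your overall strategy coincides with the paper's: form the weighted $(u_1,\Omega)$ energy from \eqref{equ1} and \eqref{eqomega}, exploit the cancellation of the wave cross-terms to expose the dissipation of $\partial_1 u_1$, and handle the lengthy nonlinear source in \eqref{eqomega} by the same classification (transport, $\nabla\cdot u$, cubic-in-$b$) using Lemmas~\ref{prop0}--\ref{prop1} and the various energy functionals. Your variant of including $\tfrac12\|\partial_2 u_1\|_{\dot H^{s-k}}^2$ in the energy, so that the $\int\partial_2^2 u_1\,\Omega$ piece cancels and you gain the extra dissipation $\|\nabla\partial_2 u_1\|^2$, is a correct and harmless modification; the paper keeps only $2\|\partial_1 u_1\|^2+\|\Omega\|^2$ and instead bounds the whole residual coupling $\int\partial^{s-k}(\partial_2^2 u_1+\partial_1\partial_2 u_2)\partial^{s-k}\Omega$ as the term $Q_3\lesssim \mathcal E_k^{1/2}\mathcal A_k^{1/2}$. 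Either choice works (though note yours introduces a few extra nonlinear terms of $Q_4$--$Q_5$ type coming from the $\partial_2 u_1$ energy, since the $u_1$ equation is not linear).

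The one genuine gap is in your treatment of the $k=3$ weight-drift. The claim that ``a Young inequality plus Proposition~\ref{A} then absorbs the $\mathcal A_3$-factor into the remainder $\mathcal E_{total}^{3/2}$ using smallness of $\mathcal E_{total}$'' is circular at this stage: Proposition~\ref{F} is a clean a priori algebraic inequality that is \emph{fed into} the bootstrap argument in Section~\ref{pp}; smallness of $\mathcal E_{total}$ is the \emph{conclusion} of that bootstrap, not a hypothesis available while proving Proposition~\ref{F}. Moreover $\mathcal A_3\lesssim\mathcal E_3+\mathcal F_3+\mathcal E_{total}^{3/2}$ is of order one in $\mathcal E_{total}$, not of order $3/2$, so it cannot be declared to ``contribute to $\mathcal E_{total}^{3/2}$''. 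The paper instead splits $w_2=w_1^{1/2}w_3^{1/2}$ to get $Q_1\lesssim\mathcal F_1^{1/2}\mathcal F_3^{1/2}+\mathcal A_1^{1/2}\mathcal A_3^{1/2}$, absorbs the $\mathcal F_3^{1/2}$ factor by Young into the left-hand side, and leaves the remaining $\mathcal E_k^{1/2}\mathcal A_k^{1/2}$-type pieces explicit in the statement; the substitution of Proposition~\ref{A} and the final absorption are deferred to the chain \eqref{b8}--\eqref{b9} in Section~\ref{pp}, where no smallness is needed. A secondary, more cosmetic point: for the cubic-in-$b$ terms the test function is $\Omega$ (they come from the $\Omega$-equation), so the weight $w_k$ is split between $\|\partial_2^2 b\|_{H^{s-1-k}}$ and $\|\Omega\|_{H^{s-k}}$, producing $\mathcal B_k^{1/2}\mathcal A_k^{1/2}$-type factors as in \eqref{Q8}--\eqref{Q9}, not between $\|\partial_2^2 b\|$ and $\|\nabla\partial_1 u_1\|_{H^{s-k}}$ as you wrote; the latter split is relevant only to $Q_4,Q_5$, which test against $\partial_1 u_1$.
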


\vskip .1in
We now prove these bounds and start with Proposition \ref{A}.

\begin{proof}[Proof of Proposition \ref{A}]
As we explained in the proofs of the previous propositions, it suffices to deal with the
highest-order norms. According to \eqref{equ1},
\beq\label{Oeq}
    \Omega = \partial_t u_1 + u \cdot \nabla u_1 - \frac{1}{\rho + 1}\Delta u_1 + \frac{\rho}{\rho+1}\Omega .
\eeq
Let $s_0 \geq 3$. Taking the inner product of (\ref{Oeq}) with $\Omega$ in $\dot H^{s_0}$ yields
\begin{equation}\nonumber
  \|\Omega\|_{\dot H^{s_0}}^2 = \sum_{i = 1}^4 M_i,
\end{equation}
where
\begin{equation}\nonumber
\begin{split}
M_1 = & \int_{\mathbb{T}^2} \p^{s_0} \partial_t u_1\, \p^{s_0}\Omega\,dx, \quad
M_2 = \int_{\mathbb{T}^2} \p^{s_0} (u\cdot \nabla u_1)\, \p^{s_0}\Omega\,dx,\\
M_3 = & - \int_{\mathbb{T}^2} \p^{s_0}\left(\frac{1}{\rho+1}\Delta u_1\right)\, \p^{s_0}\Omega\,dx, \quad
M_4 =  \int_{\mathbb{T}^2} \p^{s_0}\left(\frac{\rho}{\rho+1}\Omega\right)\, \p^{s_0}\Omega\,dx.
\end{split}
\end{equation}
$M_2$ through $M_4$ can be bounded directly by
\begin{equation}\nonumber
\begin{split}
&|M_2| + |M_3| + |M_4|\\
 \lesssim
& \; \|u\|_{H^{s_0}}\|\nabla u_1\|_{H^{s_0}} \|\Omega\|_{H^{s_0}}+ \big(1+\|\rho\|_{H^{s_0}}\big) \|\nabla u_1\|_{H^{s_0+1}} \|\Omega\|_{H^{s_0}} + \|\rho\|_{H^{s_0}} \|\Omega\|_{H^{s_0}}^2 \\
\lesssim & \; \big(\|u\|_{H^{s_0}}+ 1+\|\rho\|_{H^{s_0}}\big)\|\nabla u_1\|_{H^{s_0+1}} \|\Omega\|_{H^{s_0}}  +  \|\rho\|_{H^{s_0}} \|\Omega\|_{H^{s_0}}^2.
\end{split}
\end{equation}
Letting $s_0 = s - 1$ and multiplying by the time weight $w_0$, we obtain
\begin{equation}\label{0M234}
  \begin{split}
    \sum_{i = 2}^{4} \int_0^t w_0|M_i| \; d\tau \lesssim & \; \sup_{0\leq \tau \leq t} \big(\|u\|_{H^{s-1}}+ 1+\|\rho\|_{H^{s-1}}\big) \int_0^t w_0^\frac{1}{2} \|\nabla u_1\|_{H^{s}} w_0^\frac{1}{2}\|\Omega\|_{H^{s-1}}\; d\tau \\
    & +  \sup_{0\leq \tau \leq t} \|\rho\|_{H^{s-1}} \int_0^t w_0 \|\Omega\|_{H^{s-1}}^2 \; d\tau\\
    \lesssim & \; \mathcal{E}_0^\frac{1}{2}(t) \tilde{\mathcal{A}}^\frac{1}{2}(t) + \mathfrak{E}^\frac{1}{2}(t)\tilde{\mathcal{A}}(t).
  \end{split}
\end{equation}
Similarly, if we use weight $w_1$, it becomes
\begin{equation}\label{1M234}
  \begin{split}
    \sum_{i = 2}^{4} \int_0^t w_1|M_i| \; d\tau \lesssim
 \;  \big(\mathcal{E}_1^\frac{1}{2}(t)+ \mathcal{F}_1^\frac{1}{2}(t)\big)\mathcal{A}_1^\frac{1}{2}(t)  + \mathfrak{E}^\frac{1}{2}(t)\mathcal{A}_1(t).
  \end{split}
\end{equation}
By taking $s_0 = s - 3$ and choosing the weight $w_3$, we have

\begin{equation}\label{3M234}
  \begin{split}
    \sum_{i = 2}^{4} \int_0^t w_3 |M_i| \; d\tau
    \lesssim  \; \big(\mathcal{E}_3^\frac{1}{2}(t) + \mathcal{F}_3^\frac{1}{2}(t)\big)\mathcal{A}_3^\frac{1}{2}(t)  + \mathfrak{E}^\frac{1}{2}(t)\mathcal{A}_3(t).
  \end{split}
\end{equation}
We now deal with the first term $M_1$. Notice the equality \eqref{eqomega} $M_1$ can be written as
\begin{equation}\nonumber
  \begin{split}
    M_1
    =  \; \frac{d}{dt} \int_{\mathbb{T}^2} \p^{s_0}u_1\, \p^{s_0}\Omega\,dx - \sum_{i = 1}^6 L_i,
  \end{split}
\end{equation}
where,
\begin{equation}\nonumber
\begin{split}
L_1 =& \int_{\mathbb{T}^2}  \p^{s_0} u_1\,  \p^{s_0}(-2\partial_1^2 u_1 -\partial_2^2 u_1 - \partial_1\partial_2 u_2)\, dx,\\
L_2 =&  \int_{\mathbb{T}^2}  \p^{s_0}u_1\, \p^{s_0}(u \cdot \nabla \Omega)\,dx, \\
L_3 = & \int_{\mathbb{T}^2} \p^{s_0}u_1\,  \p^{s_0}\Big(\partial_2 u \cdot \nabla b_1 + \partial_1 (b \cdot \nabla^{\bot} u_1) - b \cdot \nabla \partial_2 u_1 \\
&\qquad\qquad \quad- \nabla^{\bot} u_1 \cdot \nabla b_1 - b \cdot \nabla(b \cdot \nabla u_1)\Big)\,dx, \\
L_4 = & \int_{\mathbb{T}^2}  \p^{s_0} u_1\, \p^{s_0}\Big(-\partial_1 u\cdot \nabla b_2 + \frac{1}{2} \partial_1 u \cdot \nabla |b|^2 - \partial_1 u \cdot \nabla P\Big)\,dx, \\
L_5 = &  \int_{\mathbb{T}^2}\p^{s_0}  u_1\,  \p^{s_0}\Big(  - \nabla^{\bot} \cdot (b \nabla \cdot u) - \partial_1 \big(|b|^2 \nabla \cdot u \big) - \partial_1 \big((P'(\tilde \rho) \tilde \rho - 1) \nabla \cdot u \big) \\
&\qquad \qquad\qquad + \nabla \cdot u  ( b \cdot \nabla b_1)+  b \cdot \nabla (b_1 \nabla \cdot u)\Big)\,dx, \\
L_6 = & \int_{\mathbb{T}^2} \p^{s_0} u_1\, \p^{s_0}\Big(  \nabla^{\bot} \cdot (b \cdot \nabla u) + \partial_1 \big(b \cdot (b \cdot \nabla u) \big)\Big)\,dx.
\end{split}
\end{equation}
By integration by parts and basic inequalities,
$$  |L_1| \lesssim  \; \|\nabla u_1\|_{H^{s_0}} \big(\|\nabla u_1\|_{H^{s_0}} + \|\partial_2 u_2\|_{H^{s_0}}\big),$$
\begin{equation}\nonumber
  \begin{split}
    L_2 \lesssim \;  \| \nabla u_1\|_{\dot H^{s_0}} \| u\|_{H^{s_0-1}} \|\Omega\|_{H^{s_0}}.
  \end{split}
\end{equation}
For $s_0 = s - 1$ and the time weight $w_0$, we have
\begin{equation}\label{0L12}
  \begin{split}
    \int_0^t w_0 \big(|L_1| + |L_2|\big) \; d\tau  \lesssim & \int_0^t w_0 \| \nabla u\|_{H^{s-1}}^2 \; d\tau \\
    & +\sup_{0\leq \tau \leq t}  \|u\|_{H^{s-1}} \int_0^t w_0^\frac{1}{2} \|\nabla u_1\|_{H^{s-1}} w_0^\frac{1}{2}\|\Omega\|_{H^{s-1}} \; d\tau\\
    \lesssim & \;\mathcal{E}_0 (t)+ \mathfrak{E}^\frac{1}{2} (t)\mathcal{E}_0^\frac{1}{2}(t) \tilde{\mathcal{A}}^\frac{1}{2}(t).
  \end{split}
\end{equation}
Similarly, if we use weight $w_1$, we have
\begin{equation}\label{1L12}
  \begin{split}
    \int_0^t w_1 (|L_1| + |L_2|) \; d\tau
    \lesssim  \;  \mathcal{E}_1(t) + \mathcal{F}_1(t)+  \mathfrak{E}^\frac{1}{2}(t) \big(\mathcal{E}_1^\frac{1}{2}(t)+\mathcal{F}_1^\frac{1}{2}(t) \big)\mathcal{A}_1^\frac{1}{2}(t).
  \end{split}
\end{equation}
For $s_0 = s - 3$ and the time weight $w_3$, the bound is given by
\begin{equation}\label{3L12}
  \begin{split}
    \int_0^t w_3 (|L_1| + |L_2|) \; d\tau
    \lesssim  \; \mathcal{E}_3(t) + \mathcal{F}_3(t) + \mathfrak{E}^\frac{1}{2}(t) \big(\mathcal{E}_3^\frac{1}{2}(t)+\mathcal{F}_3^\frac{1}{2}(t) \big)\mathcal{A}_3^\frac{1}{2}(t).
  \end{split}
\end{equation}
 After integration by parts,  $L_3$ is bounded by
\begin{equation}\nonumber
\begin{split}
| L_3 | \lesssim & \; \|\nabla u_1\|_{H^{s_0}} \big(  \|\nabla u_1\|_{H^{s_0}} + \| \partial_2 u\|_{H^{s_0}}     \big) \| b\|_{H^{s_0}}.
\end{split}
\end{equation}
Setting $s_0 = s - 1$ with the weight $w_0$ and $w_1$ yields
\begin{equation}\label{01L3}
\begin{split}
  \int_0^t w_0 |L_3| \; d\tau
  \lesssim  \; \mathfrak{E}^\frac{1}{2}(t)\mathcal{E}_0(t),  \quad
 \int_0^t w_1 |L_3| \; d\tau
  \lesssim   \;  \mathfrak{E}^\frac{1}{2}(t)  \big( \mathcal{E}_1 (t)+ \mathcal{F}_1(t) \big).
\end{split}
\end{equation}
For $s_0 = s - 3$ and the weight $w_3$, we have
\begin{equation}\label{3L3}
\begin{split}
  \int_0^t w_3 |L_3| \; d\tau
  \lesssim  \;  \mathfrak{E}^\frac{1}{2}(t)(\mathcal{E}_3(t) + \mathcal{F}_3(t)).
\end{split}
\end{equation}
We move on to the estimate for $L_4$, which is bounded by
\begin{equation}\nonumber
  \begin{split}
    |L_4|
    \lesssim & \|\nabla u_1\|_{H^{s_0}} \Big\{\|\partial _1 u_1\|_{H^{s_0 - 1}} \big(\|b\|_{H^{s_0}} + \|b\|_{H^{s_0}}^2 + \|\rho\|_{H^{s_0}}\big)\\
    & + \|\partial_1 u_2\|_{H^{s_0-1}}\big(\|\partial_2 b\|_{H^{s_0-1}} + \|b\|_{H^{s_0-1}}\|\partial_2 b\|_{H^{s_0-1}} + \|\partial_2 \rho\|_{H^{s_0-1}}\big)\Big\}.
  \end{split}
\end{equation}
Therefor, for $s_0 = s-1$ we obtain
\begin{equation}\label{01L4}
  \begin{split}
     \int_0^t w_0 |L_4| \; d\tau   \lesssim  \; & \mathfrak{E}^\frac{1}{2}(t)\mathcal{E}_0(t), \\
    \int_0^t w_1 |L_4| \; d\tau   \lesssim  \;& \mathfrak{E}^\frac{1}{2}(t)\big(\mathcal{E}_1(t)+\mathcal{F}_1(t) \big)+ \mathfrak{E}^\frac{1}{2}(t)\big(\mathcal{E}^\frac{1}{2}_1(t)+\mathcal{F}^\frac{1}{2}_1(t)\big)\big(\mathcal{P}^\frac{1}{2}_1(t)+\mathcal{B}^\frac{1}{2}_1(t)\big).
  \end{split}
\end{equation}
For $s_0 = s-3$ and the weight $w_3$, we have
\begin{equation}\label{3L4}
  \begin{split}
    \int_0^t w_3 |L_4| \; d\tau
    \lesssim  \;\mathfrak{E}^\frac{1}{2}(t)\big(\mathcal{E}_3(t)+\mathcal{F}_3(t) \big)+ \mathfrak{E}^\frac{1}{2}(t)\big(\mathcal{E}^\frac{1}{2}_3(t)+\mathcal{F}^\frac{1}{2}_3(t)\big)\big(\mathcal{P}^\frac{1}{2}_3(t)+\mathcal{B}^\frac{1}{2}_3(t)\big).
  \end{split}
\end{equation}
$L_5$ and $L_6$ are bounded by
\begin{equation}\nonumber
  \begin{split}
   & \; |L_5| + |L_6| \\
    \lesssim & \; \|\nabla u_1\|_{H^{s_0 + 1}} \big(\|b\|_{H^{s_0 - 1}} + \|b\|_{H^{s_0 - 1}}\|b\|_{H^{s_0-2}} + \|\rho\|_{H^{s_0 - 1}}\big) \|\nabla \cdot u\|_{H^{s_0 - 1}} \\
    & + \|\nabla u_1 \|_{H^{s_0 + 1}} \big(1+ \|b\|_{H^{s_0 - 1}}\big)\big(\|b_1\|_{H^{s_0 - 1}} \|\partial_1 u\|_{H^{s_0 - 1}} + \|b_2\|_{H^{s_0 - 1}} \|\partial_2 u\|_{H^{s_0 - 1}}\big).
  \end{split}
\end{equation}
Therefore, if we take $s_0=s-1$ there will hold,
\begin{equation}\label{0L56}
  \begin{split}
    \int_0^t w_0(|L_5| + |L_6|) \; d\tau
    \lesssim  \; \mathfrak{E}^\frac{1}{2}(t) \mathcal{E}_0(t).
  \end{split}
\end{equation}
\begin{equation}\label{1L56}
  \begin{split}
    \int_0^t w_1(|L_5| + |L_6|) \; d\tau
    \lesssim  \; \mathfrak{E}^\frac{1}{2} (t)\big(\mathcal{E}_1(t) + \mathcal{F}_1 (t)\big) +\mathfrak{E}^\frac{1}{2} (t)\big(\mathcal{E}^\frac{1}{2}_1(t) + \mathcal{F}^\frac{1}{2}_1 (t)\big)\mathcal{B}^\frac{1}{2}_1(t).
  \end{split}
\end{equation}
If we take $s_0=s-3$ there will be then
\begin{equation}\label{3L56}
  \begin{split}
    \int_0^t w_3(|L_5| + |L_6|) \; d\tau
    \lesssim  \; \mathfrak{E}^\frac{1}{2}(t) \big(\mathcal{E}_3(t) + \mathcal{F}_3(t) \big)+\mathfrak{E}^\frac{1}{2} (t)\big(\mathcal{E}^\frac{1}{2}_3(t) + \mathcal{F}^\frac{1}{2}_3 (t)\big)\mathcal{B}^\frac{1}{2}_3(t).
  \end{split}
\end{equation}
Combining the estimates above allows us to prove the desired bounds for $\tilde{\mathcal{A}}(t)$,
$\mathcal{A}_1(t)$ and $\mathcal{A}_3(t)$. In fact,  \eqref{0M234}, \eqref{0L12}, \eqref{01L3}, \eqref{01L4} and \eqref{0L56} together proves the inequality $\tilde{\mathcal{A}}(t)$. \eqref{1M234}, \eqref{1L12}, \eqref{01L3}, \eqref{01L4} and \eqref{1L56} yields the inequality for $\mathcal{A}_1(t)$ while \eqref{3M234}, \eqref{3L12}, \eqref{3L3}, \eqref{3L4}, \eqref{3L56} imply the estimate for $\mathcal{A}_3(t)$. This completes the proof of Proposition \ref{A}.
\end{proof}

We move on to prove Proposition \ref{F}.

\begin{proof}[Proof of Proposition \ref{F}]
According to \eqref{equ1} and \eqref{eqomega},
\begin{equation}\label{Q}
  \frac{1}{2}\frac{d}{dt} w_k \big(2 \|\partial_1 u_1\|_{\dot H^{s-k}}^2 + \|\Omega\|_{\dot H^{s-k}}^2 \big) +  2 w_k
\|\nabla \partial_1 u_1\|_{\dot H^{s-k}}^2 = \sum_{i = 1}^{10} Q_i,
\end{equation}
where,
\begin{align*}
    Q_1 = & \; \frac{k-\sigma}{2} w_{k-1} \big(2 \|\partial_1 u_1\|_{\dot H^{s-k}}^2 + \|\Omega\|_{\dot H^{s-k}}^2\big), \\
    Q_2 =& 2 w_k \int \p^{s-k}\partial_1 \Omega\, \p^{s-k}\partial_1 u_1\,dx + 2 w_k \int \p^{s-k}\partial_1^2 u_1\, \p^{s-k}\Omega\,dx, \\
    Q_3 = & \;w_k \int \p^{s-k}(\partial_2^2 u_1 + \partial_1 \partial_2 u_2)\, \p^{s-k}\Omega\,dx,\\
    Q_4 =& - w_k \int \p^{s-k}\partial_1(u \cdot \nabla u_1)\, \p^{s-k}\partial_1 u_1\,dx,\\
    Q_5 = & - w_k \int \p^{s-k}\partial_1 \Big(\frac{\rho}{\rho+1}(\Delta u_1 + \Omega)\Big)\, \p^{s-k}\partial_1 u_1\,dx, \\
    Q_6 =& - w_k \int \p^{s-k}(u \cdot \nabla \Omega)\, \p^{s-k}\Omega\,dx, \\
    Q_7 = & - w_k \int \p^{s-k}\Big(\partial_2 u \cdot \nabla b_1 - \partial_1 (b \cdot \nabla^{\bot} u_1)  + b \cdot \nabla \partial_2 u_1\\
    & + \nabla^{\bot} u_1 \cdot \nabla b_1 + b \cdot \nabla(b \cdot \nabla u_1)\Big)\, \p^{s-k}\Omega\,dx,\\
    Q_8 = & \;w_k\int \p^{s-k}\Big(\partial_1 u\cdot \nabla b_2 - \frac{1}{2} \partial_1 u \cdot \nabla |b|^2 + \partial_1 u \cdot \nabla P\Big)\, \p^{s-k}\Omega\,dx, \\
    Q_9 = & \; w_k \int\p^{s-k} \Big(-\nabla^{\bot} \cdot (b \nabla \cdot u) + \partial_1 (|b|^2 \nabla \cdot u) + \partial_1 ((P'(\tilde \rho) \tilde \rho - 1) \nabla \cdot u) \\
    &\qquad\qquad\qquad\qquad  - \nabla \cdot u b \cdot \nabla b_1 - b \cdot \nabla (b_1 \nabla \cdot u)\Big)\, \p^{s-k}\Omega\,dx, \\
    Q_{10} = & \;w_k \int \p^{s-k}\Big(\nabla^{\bot} \cdot (b \cdot \nabla u) - \partial_1 (b \cdot (b \cdot \nabla u))\Big)\, \p^{s-k}\Omega\,dx.
  \end{align*}
For $k=1$  and $k=3$, the time integral of $|Q_1|$ is bounded by
\begin{equation}\label{Q1}
\begin{split}
  & \int_0^t |Q_1| \; d\tau  \lesssim \;  \mathcal{E}_0(t) + \tilde{\mathcal{A}}(t),\quad k=1, \\
  \int_0^t |Q_1| \; d\tau  \lesssim \; & \mathcal{F}_2(t) + \mathcal{A}_2(t)   \lesssim \;  \mathcal{F}_1^\frac{1}{2}(t)\mathcal{F}_3^\frac{1}{2}(t) + \mathcal{A}_1^\frac{1}{2}(t)\mathcal{A}_3^\frac{1}{2}(t), \quad k=3.
\end{split}
\end{equation}
Integration by parts yields $Q_2=0$ while $Q_3$ through $Q_6$ are bounded by
\begin{equation}\nonumber
  \begin{split}
    Q_3 &\lesssim  \;w_k \|\nabla \partial_2 u\|_{H^{s-k}} \|\Omega\|_{H^{s-k}}, \\
    Q_4 &\lesssim  \;w_k \|u\|_{H^{s-k}} \|\nabla  u_1\|_{H^{s-k}} \|\partial_1 u_1\|_{H^{s+1-k}}, \\
    Q_5 &\lesssim  \;w_k \|\rho\|_{H^{s-k}}\big(\|\Delta u_1\|_{H^{s-k}} + \|\Omega\|_{H^{s-k}}\big) \|\partial_1 u_1\|_{H^{s+1-k}}, \\
    Q_6 &\lesssim  \;w_k \|u\|_{H^{s-k}}\|\Omega\|_{H^{s-k}}^2.
  \end{split}
\end{equation}
Therefore, for $k=1, 3$,
\begin{equation}\label{Q23456}
  \begin{split}
    \sum_{i = 2}^6 \int_0^t|Q_i| \; d\tau
    \lesssim  \; &\mathcal{E}_k^\frac{1}{2}(t) \mathcal{A}_k^\frac{1}{2}(t) + \mathfrak{E}^\frac{1}{2}(t)\big(\mathcal{E}_k^\frac{1}{2}(t)
     + \mathcal{F}_k^\frac{1}{2}(t)
     + \mathcal{A}_k^\frac{1}{2}(t)\big) \mathcal{F}_k^\frac{1}{2}(t) \\
     &+ \mathfrak{E}^\frac{1}{2}(t)\mathcal{A}_k(t).
  \end{split}
\end{equation}
Using the similar method as in proving Lemma \ref{prop1}, $Q_7$ is bounded by
\begin{align*}
    |Q_7| \lesssim &\; w_k \|\Omega\|_{H^{s-k}}\Big\{\big(\|\partial_2 u\|_{H^{s-k}} + \|\nabla u_1\|_{H^{s-k}}\big)\|\nabla b_1\|_{H^{s-k}}+  \|b\|_{H^{s-k}} \|\nabla^2 u_1\|_{H^{s-k}} \\
    & + \|\partial_1 b\|_{H^{[\frac{s-k}{2}] + 2}} \|\nabla u_1\|_{H^{s-k}} + \|\nabla u_1\|_{H^{[\frac{s-k}{2}] + 2}} \|\partial_1 b\|_{H^{s-k}}\\
    & +  \|b\|_{H^{s-k}} \big(\|b\|_{H^{s-k}} \|\nabla^2 u_1\|_{H^{s-k}} + \|\nabla b\|_{H^{[\frac{s-k}{2}] + 2}} \|\nabla u_1\|_{H^{s-k}} \\
    & + \|\nabla u_1\|_{H^{[\frac{s-k}{2}] + 2}} \|\nabla b\|_{H^{s-k}}\big)\Big\}.
  \end{align*}
Therefore,  for $k=1, 3$,
\begin{equation}\label{Q7}
  \begin{split}
    \int_0^t |Q_7| \; d\tau &
     \lesssim \;  \big( \mathcal{E}_k^\frac{1}{2}(t)+ \mathfrak{E}^\frac{1}{2} (t)\big)   \big(\mathcal{E}_k^\frac{1}{2} (t)+ \mathcal{F}_k^\frac{1}{2}(t)\big) \mathcal{A}_k^\frac{1}{2}(t)
    +  \mathcal{E}_0^\frac{1}{2}(t)   \big(\mathcal{E}_2^\frac{1}{2} (t)+ \mathcal{F}_2^\frac{1}{2}(t)\big) \mathcal{A}_k^\frac{1}{2}(t) \\
    & +\mathfrak{E} (t)\big(\mathcal{E}_k^\frac{1}{2} (t)+ \mathcal{F}_k^\frac{1}{2}(t)\big) \mathcal{A}_k^\frac{1}{2}(t)
    +  \mathfrak{E}^\frac{1}{2} (t)\mathcal{E}_0^\frac{1}{2}(t)   \big(\mathcal{E}_2^\frac{1}{2} (t)+ \mathcal{F}_2^\frac{1}{2}(t)\big) \mathcal{A}_k^\frac{1}{2}(t).
 \end{split}
\end{equation}
$Q_8$ and $Q_9$ can be dealt with similar method. For $Q_8$ it holds that,
\begin{equation}\nonumber
\begin{split}
  |Q_8| \lesssim &\Big( \| \p_1u_1\|_{H^{s-k}}\|\p_1b_2\|_{H^{[\frac{s-k}{2}] + 2}} + \|\p_1u_1\|_{H^{[\frac{s-k}{2}] + 2}}\|\p_1b_2\|_{H^{s-k}} + \|\p_1u_2\|_{H^{s-k}}\|\p_2b_2\|_{H^{s-k}}\\
  &+ \|\p_1u\|_{H^{s-k}}\|\nabla b\|_{H^{[\frac{s-k}{2}] + 2}}\|b\|_{H^{[\frac{s-k}{2}] + 2}}+\|\p_1u\|_{H^{[\frac{s-k}{2}] + 2}} \|\nabla b\|_{H^{s-k}}\|b\|_{H^{[\frac{s-k}{2}] + 2}}   \\
  &+\|\p_1 u\|_{H^{s-k}}\|\nabla \rho\|_{H^{[\frac{s-k}{2}] + 2}}+ \|\p_1u\|_{H^{[\frac{s-k}{2}] + 2}}\|\nabla\rho\|_{H^{s-k}}\Big)\| \Omega\|_{H^{s-k}} .
\end{split}
\end{equation}
Therefor, for $k=1,3$ there is,
\begin{equation}\label{Q8}
  \begin{split}
    \int_0^t|Q_8| \; d\tau
    \lesssim  &\; \big(\mathfrak{E}^\frac{1}{2}(t) \mathcal{F}_k^\frac{1}{2}(t)+ \mathcal{F}_2^\frac{1}{2}(t)\mathcal{E}_0^\frac{1}{2}(t) \big)\mathcal{A}_k^\frac{1}{2} (t)\\
    &+\big(\mathfrak{E}^\frac{1}{2}(t) \mathcal{B}_k^\frac{1}{2}(t)+ \mathcal{E}_0^\frac{1}{2}(t)\mathcal{E}_2^\frac{1}{2}(t) \big)\mathcal{A}_k^\frac{1}{2} (t)\\
    &+\big(\mathcal{E}_0^\frac{1}{2}(t)\mathfrak{E}^\frac{1}{2}(t) \mathcal{B}_2^\frac{1}{2}(t)+\mathfrak{E}(t) \mathcal{B}_k^\frac{1}{2}(t)\big)\mathcal{A}_k^\frac{1}{2} (t)\\
   & + \Big( \big(\mathcal{E}_k^\frac{1}{2}(t)+\mathcal{F}_k^\frac{1}{2}(t)\big)\mathfrak{E}^\frac{1}{2}(t)+\big(\mathcal{E}_2^\frac{1}{2}(t)+\mathcal{F}_2^\frac{1}{2}(t)\big)\mathcal{E}_0^\frac{1}{2}(t)\Big)\mathcal{A}_k^\frac{1}{2} (t).
  \end{split}
\end{equation}
And for $Q_9$ we can write,

\begin{equation}\nonumber
\begin{split}
  &|Q_9|\lesssim  \Big( \|\nabla b\|_{H^{[\frac{s-k}{2}] + 2}}\|\nabla\cdot u\|_{H^{s-k}} + \|\nabla b\|_{H^{s-k}}\|\nabla\cdot u\|_{H^{[\frac{s-k}{2}] + 2}}
   + \|b\|_{H^{s-k}} \|\nabla \nabla\cdot u\|_{H^{s-k}} \\
  &+ \|\p_1\nabla\cdot u\|_{H^{s-k}}\|b\|_{H^{s-k}}^2
  + \|\nabla b\|_{H^{s-k}}\|b\|_{H^{[\frac{s-k}{2}] + 2}}\|\nabla\cdot u\|_{H^{[\frac{s-k}{2}] + 2}}\\
  &+ \|\nabla b\|_{H^{[\frac{s-k}{2}] + 2}}\|b\|_{H^{[\frac{s-k}{2}] + 2}}\|\nabla\cdot u\|_{H^{s-k}}
  +\|\p_1\rho\|_{H^{[\frac{s-k}{2}] + 2}} \|\nabla\cdot u\|_{H^{s-k}}\\ & + \|\p_1\rho\|_{H^{s-k}}\|\nabla\cdot u\|_{H^{[\frac{s-k}{2}] + 2}}+\|\rho\|_{H^{s-k}}\|\p_1\nabla\cdot u\|_{H^{s-k}}
\\
  & +\|\p_1b\|_{H^{[\frac{s-k}{2}] + 2}}\|b\|_{H^{[\frac{s-k}{2}] + 2}}\|\nabla u\|_{H^{s-k}}+\|\p_1b\|_{H^{s-k}}\|b\|_{H^{[\frac{s-k}{2}] + 2}}\|\nabla u\|_{H^{[\frac{s-k}{2}] + 2}} \\
  &+ \|b\|_{H^{[\frac{s-k}{2}] + 2}}^2 \|\p_1\nabla u\|_{H^{s-k}} \Big)\|\Omega\|_{H^{s-k}}.
\end{split}
\end{equation}
Indeed, for $k=1,3$ there is,

\begin{equation}\label{Q9}
  \begin{split}
    \int_0^t |Q_9| \; d\tau &
     \lesssim  \;   \mathfrak{E}^\frac{1}{2}(t)\big( \mathcal{E}_k^\frac{1}{2}(t) +\mathcal{F}_k^\frac{1}{2}(t) \big) \mathcal{A}_k^\frac{1}{2}(t) + \mathcal{E}_0^\frac{1}{2}(t)\big( \mathcal{E}_2^\frac{1}{2}(t) +\mathcal{F}_2^\frac{1}{2}(t) \big) \mathcal{A}_k^\frac{1}{2}(t)\\
     &+ \mathfrak{E}(t)\big( \mathcal{E}_k^\frac{1}{2}(t) +\mathcal{F}_k^\frac{1}{2}(t) \big) \mathcal{A}_k^\frac{1}{2}(t)+\mathcal{E}_0^\frac{1}{2}(t)\mathfrak{E}^\frac{1}{2}(t)\big( \mathcal{E}_2^\frac{1}{2}(t) +\mathcal{F}_2^\frac{1}{2}(t) \big) \mathcal{A}_k^\frac{1}{2}(t)\\
    & +\mathfrak{E}(t)\mathcal{B}_k^\frac{1}{2}(t)\mathcal{A}_k^\frac{1}{2}(t)+\mathfrak{E}^\frac{1}{2}(t)\mathcal{E}_0^\frac{1}{2}(t)\mathcal{B}_2^\frac{1}{2}(t)\mathcal{A}_k^\frac{1}{2}(t)
    +\mathfrak{E}^\frac{1}{2}(t)\mathcal{E}_0^\frac{1}{2}(t)\mathcal{E}_2^\frac{1}{2}(t)\mathcal{A}_k^\frac{1}{2}(t).
  \end{split}
\end{equation}

Finally we focus on  the most complicated term $Q_{10}$. By Lemma \ref{prop1},
\begin{align*}
    |Q_{10}|\lesssim & \; w_k \|\Omega\|_{H^{s-k}} \Big\{ \big(1 + \|b\|_{H^{s-k}}\big)\big(\|b_2\|_{H^{[\frac{s-k+1}{2}]+2}} \|\partial_2 u\|_{H^{s-k+1}}\\
    &+ \|b_2\|_{H^{s-k+1}}\|\partial_2 u\|_{H^{[\frac{s-k+1}{2}] + 2}}  + \|b_1\|_{H^{[\frac{s-k+1}{2}]+2}} \|\partial_1 u\|_{H^{s-k+1}}\\
    & + \|b_1\|_{H^{s-k+1}}\|\partial_1 u\|_{H^{[\frac{s-k+1}{2}] + 2}}\big)\\
    & + \|\partial_1 b\|_{H^{s-k}} \big(\|b_2\|_{H^{[\frac{s-k}{2}]+2}} \|\partial_2 u\|_{H^{s-k}} + \|b_2\|_{H^{s-k}}\|\partial_2 u\|_{H^{[\frac{s-k}{2}] + 2}} \\
    & + \|b_1\|_{H^{[\frac{s-k}{2}]+2}} \|\partial_1 u\|_{H^{s-k}} + \|b_1\|_{H^{s-k}}\|\partial_1 u\|_{H^{[\frac{s-k}{2}] + 2}}\big)\Big\}.
  \end{align*}
For $k=1$, we can bound $\int_0^t |Q_{10}| \; d\tau$ as follows

\begin{align*}
    & \quad \sup_{0\leq \tau \leq t}  \big(1 + \|b\|_{H^{s-1}}\big)\|b_2\|_{H^{[\frac{s}{2}]+2}} \int_0^t w_1^\frac{1}{2}\|\partial_2 u\|_{H^{s}}w_1^\frac{1}{2} \|\Omega\|_{H^{s-1}} \; d\tau \\
    & +  \sup_{0\leq \tau \leq t} \big(1 + \|b\|_{H^{s-1}}\big) (1+\tau)^{-1}\|b_2\|_{H^{s}}\int_0^t w_3^\frac{1}{2}\|\partial_2 u\|_{H^{[\frac{s}{2}] + 2}}w_1^\frac{1}{2} \|\Omega\|_{H^{s-1}}  \; d\tau \\
    & +\sup_{0\leq \tau \leq t} \big(1 + \|b\|_{H^{s-1}}\big) w_3^\frac{1}{2} \|b_1\|_{H^{[\frac{s}{2}]+2}} \int_0^t (1+\tau)^{-1}\|\partial_1 u\|_{H^{s}} w_1^\frac{1}{2} \|\Omega\|_{H^{s-1}}  \; d\tau \\
    & + \sup_{0\leq \tau \leq t}  \big(1 + \|b\|_{H^{s-1}}\big) w_1^\frac{1}{2} \|b_1\|_{H^{s}} \int_0^t \|\partial_1 u\|_{H^{[\frac{s}{2}] + 2}} w_1^\frac{1}{2} \|\Omega\|_{H^{s-1}} \; d\tau \\
    & + \sup_{0\leq \tau \leq t} (1+\tau)^{-\frac{1}{2}}\|\partial_1 b\|_{H^{s-1}} \|b_2\|_{H^{[\frac{s-1}{2}]+2}}
    \int_0^t w_2^\frac{1}{2}\|\partial_2 u\|_{H^{s-1}}w_1^\frac{1}{2} \|\Omega\|_{H^{s-1}} \; d\tau \\
    & + \sup_{0\leq \tau \leq t} (1+\tau)^{-1}\|\partial_1 b\|_{H^{s-1}}  \|b_2\|_{H^{s-1}}\int_0^t w_3^\frac{1}{2}\|\partial_2 u\|_{H^{[\frac{s-1}{2}] + 2}} w_1^\frac{1}{2} \|\Omega\|_{H^{s-1}} \; d\tau \\
    & + \sup_{0\leq \tau \leq t} (1+\tau)^{-1}\|\partial_1 b\|_{H^{s-1}} w_3^\frac{1}{2}\|b_1\|_{H^{[\frac{s-1}{2}]+2}} \int_0^t \|\partial_1 u\|_{H^{s-1}}  w_1^\frac{1}{2} \|\Omega\|_{H^{s-1}} \; d\tau \\
    & + \sup_{0\leq \tau \leq t} (1+\tau)^{-\frac{1}{2}}\|\partial_1 b\|_{H^{s-1}} w_2^\frac{1}{2} \|b_1\|_{H^{s-1}}
    \int_0^t \|\partial_1 u\|_{H^{[\frac{s-1}{2}] + 2}}w_1^\frac{1}{2} \|\Omega\|_{H^{s-1}} \; d\tau.
\end{align*}
Here, we have used the fact $(1+t)^{-1} \leq w_0^{\frac{1}{2}}$ and $(1+t)^{-\frac{1}{2}} \leq w_0^{\frac{1}{2}}$.
Therefore,

\begin{equation}\label{1Q10}
\begin{split}
  \int_0^t|Q_{10}| &\; d\tau
\lesssim  \; \mathfrak{E}^\frac{1}{2}(t) \mathcal{E}_1^\frac{1}{2}(t)\mathcal{A}_1^\frac{1}{2}(t) + \mathcal{E}_0^\frac{1}{2}(t)\mathcal{E}_3^\frac{1}{2}(t)\mathcal{A}_1^\frac{1}{2}(t)+\mathcal{E}_3^\frac{1}{2}(t)\mathcal{E}_0^\frac{1}{2}(t)\mathcal{A}_1^\frac{1}{2}(t)\\
&+\mathcal{E}_1^\frac{1}{2}(t)\mathfrak{E}^\frac{1}{2}(t) \mathcal{A}_1^\frac{1}{2}(t)+\mathcal{E}_0^\frac{1}{2}(t)\mathfrak{E}^\frac{1}{2}(t) \mathcal{E}_2^\frac{1}{2}(t) \mathcal{A}_1^\frac{1}{2}(t)+\mathcal{E}_0^\frac{1}{2}(t)\mathfrak{E}^\frac{1}{2}(t) \mathcal{E}_3^\frac{1}{2}(t) \mathcal{A}_1^\frac{1}{2}(t)\\
&+\mathcal{E}_0^\frac{1}{2}(t)\mathcal{E}_3^\frac{1}{2}(t) \mathfrak{E}^\frac{1}{2}(t) \mathcal{A}_1^\frac{1}{2}(t)+\mathcal{E}_0^\frac{1}{2}(t)\mathcal{E}_2^\frac{1}{2}(t) \mathfrak{E}^\frac{1}{2}(t) \mathcal{A}_1^\frac{1}{2}(t).
\end{split}
\end{equation}
Similarly, for $k=3$, we can bound $\int_0^t |Q_{10}| \; d\tau$ as follows.

\begin{align*}
    & \quad \sup_{0\leq \tau \leq t}  \big(1 + \|b\|_{H^{s-3}}\big)\|b_2\|_{H^{s-2}} \int_0^t w_3^\frac{1}{2}\|\partial_2 u\|_{H^{s-2}}w_3^\frac{1}{2} \|\Omega\|_{H^{s-3}} \; d\tau \\
    & +\sup_{0\leq \tau \leq t} \big(1 + \|b\|_{H^{s-3}}\big) w_3^\frac{1}{2} \|b_1\|_{H^{s-2}} \int_0^t \|\partial_1 u\|_{H^{s-2}} w_3^\frac{1}{2} \|\Omega\|_{H^{s-3}}  \; d\tau \\
    & + \sup_{0\leq \tau \leq t} \|\partial_1 b\|_{H^{s-3}} \|b_2\|_{H^{s-3}}
    \int_0^t w_3^\frac{1}{2}\|\partial_2 u\|_{H^{s-3}}w_3^\frac{1}{2} \|\Omega\|_{H^{s-3}} \; d\tau \\
    & + \sup_{0\leq \tau \leq t} \|\partial_1 b\|_{H^{s-3}} w_3^\frac{1}{2}\|b_1\|_{H^{s-3}} \int_0^t \|\partial_1 u\|_{H^{s-3}}  w_3^\frac{1}{2} \|\Omega\|_{H^{s-3}} \; d\tau.
\end{align*}
Which yields that,

\begin{equation}\label{3Q10}
\begin{split}
  \int_0^t|Q_{10}| \; d\tau
\lesssim  \; & \mathfrak{E}^\frac{1}{2}(t) \mathcal{E}_3^\frac{1}{2}(t)\mathcal{A}_3^\frac{1}{2}(t) + \mathcal{E}_3^\frac{1}{2}(t)\mathfrak{E}^\frac{1}{2}(t) \mathcal{A}_3^\frac{1}{2}(t)\\
&+\mathfrak{E}(t) \mathcal{E}_3^\frac{1}{2}(t)\mathcal{A}_3^\frac{1}{2}(t)+\mathfrak{E}^\frac{1}{2}(t) \mathcal{E}_3^\frac{1}{2}(t)\mathfrak{E}^\frac{1}{2}(t)\mathcal{A}_3^\frac{1}{2}(t).
\end{split}
\end{equation}

Integrating \eqref{Q} in time and incorporating the bounds in \eqref{Q1}, \eqref{Q23456}, \eqref{Q7}, \eqref{Q8}, \eqref{Q9}, \eqref{1Q10} and \eqref{3Q10}, applying Young's inequality, we obtain the desired inequality in Proposition \ref{F}.
\end{proof}

\vskip .3in
\section{Decay estimate of $u_2$}
\label{u2}

This section proves the {\it a priori} bound for the energy functional $\mathfrak{U}(t)$.
As indicated by its definition,  $\mathfrak{U}(t)$ measures the regularity and decay properties
of the second velocity component $u_2$. The precise upper bound is stated in the following
proposition.
\begin{lemma}\label{U}
	Let $\mathcal{E}_{total}(t)$ be the total energy defined in \eqref{total}. Then, for any $t>0$, 	
\begin{equation}\nonumber
\mathfrak{U}(t) \lesssim  \mathcal{E}_0(t)+\mathcal{P}_1(t)+\mathcal{B}_1(t) + \mathcal{P}_3(t)+\mathcal{B}_3(t).
\end{equation}
\end{lemma}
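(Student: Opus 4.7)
The plan is a weighted $\dot H^{s-2}$ energy estimate on $u_2$, coupled with the $\rho$-equation via the wave cancellation \eqref{ww}, so that the worst linear cross term disappears and what remains can be closed using only $\mathcal{E}_0$, $\mathcal{P}_k$, and $\mathcal{B}_k$. First I would rewrite the $u_2$-equation of \eqref{mhd1} in the form
\[
\partial_t u_2 - \Delta u_2 + \partial_2\rho = N_{u_2},
\]
where $N_{u_2}$ collects the transport $-u\cdot\nabla u_2$, the viscous correction $-\tfrac{\rho}{\rho+1}\Delta u_2$, the quadratic pressure correction $-\bigl(\tfrac{P'(\rho+1)}{\rho+1}-1\bigr)\partial_2\rho$, and the magnetic source. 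A short computation using $\nabla\cdot b=0$ shows that the magnetic source collapses to $-\tfrac{b_1}{\rho+1}\nabla^{\bot}\!\cdot b$, which is the form best suited to invoking $\mathcal{B}_k$ together with the Poincar\'e-type bound $\|b_1\|_{H^{s_0}}\lesssim\|\partial_2 b\|_{H^{s_0-1}}$ (valid since $\int b(x,t)\,dx=0$). Alongside I keep the $\rho$-equation $\partial_t\rho+\partial_1 u_1+\partial_2 u_2=-\nabla\cdot(\rho u)$.

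Next I would apply $\partial^{s-2}$ to both equations, pair with $\partial^{s-2}u_2$ and $\partial^{s-2}\rho$ in $L^2$, and sum. The two cross terms $\int\partial^{s-2}\partial_2\rho\,\partial^{s-2}u_2\,dx$ and $\int\partial^{s-2}\partial_2 u_2\,\partial^{s-2}\rho\,dx$ cancel by integration by parts in $x_2$, leaving
\[
\tfrac{1}{2}\tfrac{d}{dt}\bigl(\|u_2\|_{\dot H^{s-2}}^2+\|\rho\|_{\dot H^{s-2}}^2\bigr)+\|\nabla u_2\|_{\dot H^{s-2}}^2=-\!\int\!\partial^{s-2}\partial_1 u_1\,\partial^{s-2}\rho\,dx+(\mathrm{NL}).
\]
Multiplying by $w_2$ and integrating on $[0,t]$, the weight derivative contributes $(2-\sigma)\int_0^t w_1\bigl(\|u_2\|_{\dot H^{s-2}}^2+\|\rho\|_{\dot H^{s-2}}^2\bigr)d\tau$. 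Since $\partial^{s-2}u_2$ has zero mean on $\mathbb{T}^2$, Poincar\'e gives $\|u_2\|_{\dot H^{s-2}}\lesssim\|\nabla u_2\|_{\dot H^{s-2}}$; together with the split $w_1=w_0^{1/2}w_2^{1/2}$ and Young's inequality, the $u_2$ piece is absorbed into the dissipation modulo $\int w_0\|\nabla u_2\|_{\dot H^{s-2}}^2\leq\mathcal{E}_0$. The linear source $\int w_2\partial^{s-2}\partial_1 u_1\,\partial^{s-2}\rho$ is controlled by Cauchy--Schwarz using the $\mathcal{E}_0$-bound $\int w_0\|\nabla u\|_{H^s}^2$. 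The remaining nonlinear contributions in $N_{u_2}$ and in $\nabla\cdot(\rho u)$ are routine product estimates; the magnetic term $-\tfrac{b_1}{\rho+1}\nabla^{\bot}\!\cdot b$, after the Poincar\'e-type bound on $b_1$ and the weight split $w_2=w_1^{1/2}w_3^{1/2}$, distributes across $\mathcal{B}_1$ and $\mathcal{B}_3$. Discarding the nonnegative $w_2\|\rho\|_{\dot H^{s-2}}^2$ on the left then yields the stated bound on $\mathfrak{U}$.

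The main obstacle is the contribution $(2-\sigma)\int_0^t w_1\|\rho\|_{\dot H^{s-2}}^2\,d\tau$ generated by the time-weight derivative, since $\rho$ carries no intrinsic dissipation in the system. The closure rests on the Fourier-side anisotropic interpolations $\|\rho\|_{\dot H^{s-2}}^2\lesssim\|\rho\|_{\dot H^{s-1}}\|\rho\|_{\dot H^{s-3}}$ and $\|\partial_2\rho\|_{\dot H^{s-2}}^2\lesssim\|\rho\|_{\dot H^{s-1}}\|\partial_2^2\rho\|_{\dot H^{s-3}}$ (both immediate from Cauchy--Schwarz on $\sum |\xi|^{2(s-2)}|\hat\rho|^2$ and $\sum|\xi_2|^2|\xi|^{2(s-2)}|\hat\rho|^2$ respectively), combined with the standing sup-norm bound $\|\rho\|_{H^{s-1}}\lesssim\mathfrak{E}^{1/2}$ and the weight splits $w_1=w_{-1}^{1/2}w_3^{1/2}$ and $w_2=w_1^{1/2}w_3^{1/2}$. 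These distribute the otherwise-unbounded time integral between the convergent piece $\int w_{-1}\|\rho\|_{H^s}^2\leq\mathcal{E}_0$ and the anisotropic vertical-dissipation integrals $\mathcal{P}_1$, $\mathcal{P}_3$, which is precisely what dictates the linear combination $\mathcal{E}_0+\mathcal{P}_1+\mathcal{B}_1+\mathcal{P}_3+\mathcal{B}_3$ appearing on the right-hand side of the lemma.
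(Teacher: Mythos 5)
Your proposal takes a genuinely different route from the paper, and unfortunately the key move — coupling the $u_2$-equation with the $\rho$-equation so that the $\partial_2\rho$ cross term cancels — is counterproductive and leaves a gap that the listed energies cannot close.

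The point of the paper's strategy is that the $\frac{1}{\rho+1}\partial_2 P$ term in the $u_2$-equation carries a $\partial_2$-derivative of $\rho$, and this is precisely what connects to the vertical-dissipation functionals. After integrating by parts to move one derivative onto $u_2$, the paper lands on $\|\partial_2\rho\|_{H^{s-3}}\,\|u_2\|_{\dot H^{s-1}}$; since $\partial_2\rho$ has zero $x_2$-mean, Poincar\'e gives $\|\partial_2\rho\|_{H^{s-3}}\lesssim\|\partial_2^2\rho\|_{H^{s-3}}$, which is exactly the integrand of $\mathcal{P}_2$. The magnetic source is treated analogously and feeds into $\mathcal{B}_2$, and the $W_1$ weight-derivative term is absorbed by $\mathcal{E}_0$ and the dissipation. (Your observation that the magnetic source collapses to $-\frac{b_1}{\rho+1}\nabla^{\perp}\!\cdot b$ is correct and would be a fine alternative route to $\mathcal{B}_k$.)

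By cancelling the $\partial_2\rho$ term you throw away this structure, and two new contributions appear that cannot be bounded by $\mathcal{E}_0$, $\mathcal{P}_k$, $\mathcal{B}_k$: the weight-derivative term $(2-\sigma)\int_0^t w_1\|\rho\|_{\dot H^{s-2}}^2\,d\tau$, and the cross term $\int_0^t w_2\int\partial^{s-2}\partial_1 u_1\,\partial^{s-2}\rho\,dx\,d\tau$ coming from the $\rho$-equation. Both pair $\rho$ \emph{without} a $\partial_2$-derivative at time weights $w_1$, $w_2$. The only energy controlling undifferentiated $\rho$ in time-integral form is $\int w_{-1}\|\rho\|_{H^s}^2\lesssim\mathcal{E}_0$, whose weight $w_{-1}$ is too slow; the $\mathcal{P}_k$ energies control only $\partial_2^2\rho$. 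Your interpolation $\|\rho\|_{\dot H^{s-2}}^2\lesssim\|\rho\|_{\dot H^{s-1}}\|\rho\|_{\dot H^{s-3}}$ produces no $\partial_2$-derivative and so does not bridge to $\mathcal{P}_k$; the second interpolation $\|\partial_2\rho\|_{\dot H^{s-2}}^2\lesssim\|\rho\|_{\dot H^{s-1}}\|\partial_2^2\rho\|_{\dot H^{s-3}}$ would be usable, but $\partial_2\rho$ is not what appears in the problematic terms. Concretely, splitting $w_1=w_{-1}^{1/2}w_3^{1/2}$ leaves a factor $\int w_3\|\rho\|_{\dot H^{s-3}}^2$, which is not one of the available energies, and $w_2=w_1^{1/2}w_3^{1/2}$ on the cross term has the same defect. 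The mean $\bar\rho(x_1)=\frac{1}{2\pi}\int\rho\,dx_2$ carries no dissipation or decay from the system, so no Poincar\'e trick can recover $\partial_2$ for free. You should instead leave the $\partial_2 P$ term where it is, integrate by parts to place one derivative on $u_2$, and then use the zero-$x_2$-mean Poincar\'e bound $\|\partial_2\rho\|_{H^{s-3}}\lesssim\|\partial_2^2\rho\|_{H^{s-3}}$ to land in $\mathcal{P}_2$, interpolating $\mathcal{P}_2\lesssim\mathcal{P}_1^{1/2}\mathcal{P}_3^{1/2}$ at the end.
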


\begin{proof}
We recall the equation of $u_2$ in \eqref{mhd1},
\begin{equation}\nonumber
    \partial_t u_2 - \Delta u_2 + \frac{1}{\rho+1} \partial_2 P = - u \cdot \nabla u_2 - \frac{\rho}{\rho + 1} \Delta u_2
    + \frac{1}{\rho + 1} b \cdot \nabla b_2 - \frac{1}{2(\rho + 1)} \partial_2 |b|^2.
\end{equation}
Taking the inner product in $\dot H^{s-2}$ yields
\begin{equation}\label{W}
  \begin{split}
    \frac{1}{2} \frac{d}{dt} w_2\|u_2\|_{\dot H^{s-2}}^2 + w_2\|\nabla u_2\|_{\dot H^{s-2}}^2 = \sum_{i = 1}^3 W_i,
  \end{split}
\end{equation}
where,
\begin{equation}\nonumber
  \begin{split}
    W_1 &= \frac{1}{2}\partial_t w_2 \|u_2\|_{\dot H^{s-2}}^2, \\
    W_2 &= -w_2 \int \p^{s-2}( u\cdot \nabla u_2)\, \p^{s-2} u_2\,dx- w_2 \int \p^{s-2}\Big(\frac{\rho}{\rho + 1} \Delta u_2\Big)\, \p^{s-2} u_2\,dx, \\
    W_3 &= w_2 \int  \p^{s-2}\Big(\frac{1}{\rho + 1} \big(-\partial_2 P + b\cdot \nabla b_2 - \frac{1}{2}\partial_2 |b|^2\big)\Big)\, \p^{s-2} u_2\,dx.
  \end{split}
\end{equation}
To bound $W_1$, we use the fact $w_1 = w_0^\frac{1}{2} w_2^\frac{1}{2}$ to obtain
\begin{equation}\label{W1}
  \begin{split}
    \int_0^t |W_1|\; d\tau = \; \frac{2-\sigma}{2} \int_0^t w_0^\frac{1}{2} \|u_2\|_{\dot H^{s-2}} w_2^\frac{1}{2} \|u_2\|_{\dot H^{s-2}} \; d\tau
    \lesssim  \; \mathcal{E}_0^\frac{1}{2}(t) \mathfrak{U}^\frac{1}{2}(t).
  \end{split}
\end{equation}
It is clear that $W_2$ is bounded by
\begin{equation}\label{W2}
  \begin{split}
    \int_0^t |W_2| \; d\tau
    \lesssim & \sup_{0\leq \tau \leq t} \|u\|_{H^{s-2}} \int_0^t w_2 \|\nabla u_2\|_{\dot H^{s-2}}^2 \; d\tau\\
    & +
    \sup_{0\leq \tau \leq t} \|\rho\|_{H^{s-3}} \int_0^t w_2 \|\Delta u_2\|_{H^{s-3}} \|u_2\|_{\dot H^{s-1}}\; d\tau \\
    \lesssim & \; \mathfrak{E}^\frac{1}{2}(t) \mathfrak{U}(t).
  \end{split}
\end{equation}
By integration by parts,

\begin{equation}\nonumber
  \begin{split}
     |W_3|
    \lesssim  \; w_2 \|\rho\|_{H^{s-3}}\big(\|\partial_2 \rho\|_{H^{s-3}} + \|\nabla b\|_{H^{s-3}} \| b\|_{H^{s-3}}    \big) \|u_2\|_{\dot H^{s-1}}.
  \end{split}
\end{equation}
Therefore, notice the assumption \ref{assum} we shall derive

\begin{equation}\label{W3}
  \begin{split}
    \int_0^t |W_3| \; d\tau \lesssim \; & \mathfrak{E}^\frac{1}{2}(t)\mathcal{P}_2^\frac{1}{2}(t)\mathfrak{U}^\frac{1}{2}(t)+\mathfrak{E}(t)\mathcal{B}_2^\frac{1}{2}(t)\mathfrak{U}^\frac{1}{2}(t) \\
    \lesssim \; & \big( \mathcal{P}_1^\frac{1}{2}(t)+\mathcal{B}_1^\frac{1}{2}(t)+\mathcal{P}_3^\frac{1}{2}(t)+\mathcal{B}_3^\frac{1}{2}(t)\big)\mathfrak{U}^\frac{1}{2}(t).
  \end{split}
\end{equation}
Integrating \eqref{W} in time, applying Young's inequality, and using \eqref{W1}, \eqref{W2} and \eqref{W3}, we then obtain the desired inequality stated.
\end{proof}

\vskip .3in
\section{Proof of Theorem \ref{thm1}}
\label{pp}

This section makes use of the estimates in the propositions of the previous sections
to finish the proof of Theorem \ref{thm1}.

\begin{proof}[Proof of Theorem \ref{thm1}] The local-in-time well-posedness of \eqref{mhd1} in the Sobolev setting $H^s(\mathbb T^2)$ with sufficiently large $s$ can be shown following standard approaches (see, e.g., \cite{MaBe}). It remains to establish the global bound of $(\rho, u, b)$ in $H^s$. We employ the bootstrapping argument (see,  e.g.,\cite{Tao}). The first step here is
to combine the inequalities obtained in the propositions for $\mathcal{E}_0(t)$, $\mathfrak{E}(t)$, $\mathcal{E}_k$, $\mathcal{P}_k$, $\mathcal{B}_k$, $\tilde{\mathcal{A}}(t)$, $\mathcal{A}_k(t)$ and $\mathfrak{U}(t)$ to show that, for some positive constant $C^*$,
\begin{equation}\label{ee}
	\mathcal{E}_{total}(t) \leq C^* \mathcal{E}_{total}(0)+ C^* \mathcal{E}^{\frac{3}{2}}_{total}(t),
\end{equation}
where $\mathcal{E}_{total}(t)$ is as defined in (\ref{total}), or
\begin{equation}\nonumber
\begin{split}
  \mathcal{E}_{total}(t)\triangleq & \sum_{i=0,1,3} \mathcal{E}_i(t)+\sum_{i=1,3} \mathcal{P}_i(t)+\sum_{i=1,3} \mathcal{B}_i(t)+\sum_{i=1,3} \mathcal{F}_i(t)+\sum_{i=1,3} \mathcal{A}_i(t) \\
 & \qquad  +  \tilde{\mathcal{A}}(t)  +  \mathfrak{U}(t)+\mathfrak{E}(t).
\end{split}
\end{equation}
According to Propositions \ref{lemmaBasic}, \ref{strongDis},  \ref{strongDis2}, \ref{strongDis3},
\ref{A}, \ref{F} and \ref{U},
\begin{align}
 &\mathcal{E}_0(t)  \lesssim  \; \mathcal{E}_0(0) +  \mathcal{E}_{total}^{\frac{3}{2}}(t), \label{b1}\\
 &\mathfrak{E}(t)  \lesssim \; \mathcal{E}_0(0) +  \mathcal{E}_{total}^{\frac{3}{2}}(t),
 \label{b2}\\
& \mathcal{E}_1(t) \lesssim \; \mathcal{E}_1(0)+\mathcal{E}_0^{\frac{1}{2}} (t) \big( \mathcal{E}_1^{\frac{1}{2}}(t) + \mathcal{P}_1^{\frac{1}{2}}(t) +\mathcal{B}_1^{\frac{1}{2}}(t)  \big)+ \mathcal{E}^{\frac{3}{2}}_{total}(t), \label{b3}\\
 &\mathcal{E}_3(t) \lesssim \;  \mathcal{E}_3(0)+\big( \mathcal{E}_1^{\frac{1}{2}}(t) + \mathcal{P}_1^{\frac{1}{2}} (t)+\mathcal{B}_1^{\frac{1}{2}} (t) \big)\big( \mathcal{E}_3^{\frac{1}{2}}(t) + \mathcal{P}_3^{\frac{1}{2}}(t) +\mathcal{B}_3^{\frac{1}{2}} (t) \big) + \mathcal{E}^{\frac{3}{2}}_{total}(t),\label{b4}\\
 &\mathcal{P}_k(t) \lesssim \;\mathcal{E}_k(0)+ \mathcal{E}_k(t)+\mathcal{E}^{\frac{3}{2}}_{total}(t),\qquad k=1, 3,\label{b5}\\
&\mathcal{B}_k(t) \lesssim \;\mathcal{E}_k(0)+ \mathcal{E}_k(t)+\mathcal{P}_k(t)+\mathcal{E}^{\frac{3}{2}}_{total}(t),\qquad k=1, 3,\label{b6}\\
&\tilde{\mathcal{A}}(t)  \lesssim \; \mathcal{E}_0 (t) +  \mathcal{E}^{\frac{3}{2}}_{total}(t),
\label{b7}\\
&\mathcal{A}_k(t) \lesssim  \;  \mathcal{E}_k (t) +\mathcal{F}_k(t) +\mathcal{E}^{\frac{3}{2}}_{total}(t),  \qquad  k=1,3,\label{b8}\\
&\mathcal{F}_k(t) \lesssim \;   \mathcal{E}_0(t)+ \tilde{\mathcal{A}}(t)  +\mathcal{E}_k^\frac{1}{2} (t) \mathcal{A}_k^\frac{1}{2}(t) + \mathcal{E}_{total}^{\frac{3}{2}}(t) , \qquad k=1,3,\label{b9}\\
&\mathfrak{U}(t) \lesssim  \mathcal{E}_0(t)+\mathcal{P}_1(t)+\mathcal{B}_1(t) + \mathcal{P}_3(t)+\mathcal{B}_3(t). \label{b10}
\end{align}
(\ref{b1}) and (\ref{b2}) imply that $\mathcal{E}_0(t)$ and $\mathfrak{E}(t)$ satisfies (\ref{ee}).
(\ref{b7}) then assesses that $\tilde{\mathcal{A}}(t)$ satisfies (\ref{ee}). Inserting (\ref{b1}), and (\ref{b5}) and (\ref{b6}) with $k=1$ in (\ref{b3}) yields

\begin{align*}
\mathcal{E}_1(t) \lesssim&  \;\mathcal{E}_1(0) + \big(\mathcal{E}_0^{\frac12}(0) +  \mathcal{E}_{total}^{\frac{3}{4}}(t)\big)
\big(\mathcal{E}_0^{\frac12}(0) + \mathcal{E}_1^{\frac12}(t) +  \mathcal{E}_{total}^{\frac{3}{4}}(t)\big)  + \mathcal{E}^{\frac{3}{2}}_{total}(t)\\
\le& \; \frac12 \mathcal{E}_1(t) + C\, \mathcal{E}_{total}(0)  + C\, \mathcal{E}^{\frac{3}{2}}_{total}(t).
\end{align*}
That is,
\beq\label{b11}
\mathcal{E}_1(t) \le C\, \mathcal{E}_{total}(0) + C\, \mathcal{E}^{\frac{3}{2}}_{total}(t).
\eeq
Inserting (\ref{b11}) in (\ref{b5}) and (\ref{b6}) with $k=1$ leads to
\begin{align}
	&\mathcal{P}_1(t) \le C\,  \mathcal{E}_{total}(0)+ C\, \mathcal{E}^{\frac{3}{2}}_{total}(t), \label{b12} \\
	&\mathcal{B}_1(t) \le C\,  \mathcal{E}_{total}(0) + C\, \mathcal{E}^{\frac{3}{2}}_{total}(t). \label{b13}
\end{align}
Inserting (\ref{b11}), (\ref{b12}) and (\ref{b13}) as well as (\ref{b5}) and (\ref{b6}) with $k=3$
in (\ref{b4}), we find that
$$
\mathcal{E}_3(t) \le C\,  \mathcal{E}_{total}(0) + C\, \mathcal{E}^{\frac{3}{2}}_{total}(t),
$$
which, in turn, yields
\begin{align*}
	&\mathcal{P}_3(t) \le C\,  \mathcal{E}_{total}(0) + C\, \mathcal{E}^{\frac{3}{2}}_{total}(t),  \\
	&\mathcal{B}_3(t) \le C\,  \mathcal{E}_{total}(0) + C\, \mathcal{E}^{\frac{3}{2}}_{total}(t).
\end{align*}
As a consequence, (\ref{b10}) implies $\mathfrak{U}(t)$ satisfies (\ref{ee}).
Inserting (\ref{b8}) in (\ref{b9}) then shows that
$$
\mathcal{F}_k(t) \le C\, \mathcal{E}_{total}(0) + C\, \mathcal{E}^{\frac{3}{2}}_{total}(t),
$$
which subsequently implies
$$
\mathcal{A}_k(t)  \le C\, \mathcal{E}_{total}(0) + C\, \mathcal{E}^{\frac{3}{2}}_{total}(t).
$$
We have thus verifies (\ref{ee}).

\vskip .1in
We apply the bootstrapping argument to (\ref{ee}). Assume the initial data $(\rho_0, u_0, b_0)$ in $H^s(\mathbb{T}^2)$ is sufficiently small,
$$
\|(\rho_0, u_0, b_0)\|_{H^s} \le \epsilon
$$
such that
$$
 C^* \mathcal{E}_{total}(0) \le \frac{1}{16 (C^*)^2}.
$$
If we make the ansatz that
\beq\label{an}
\mathcal{E}_{total}(t) \le \frac1{4 (C^*)^2},
\eeq
then (\ref{ee}) implies
$$
\mathcal{E}_{total}(t) \le \frac{1}{16 (C^*)^2} + \frac12 \mathcal{E}_{total}(t)
$$
or
\beq\label{cc}
\mathcal{E}_{total}(t) \le \frac1{8 (C^*)^2}.
\eeq
That is, (\ref{cc}) provides a sharper upper bound than the ansatz (\ref{an}) does. The bootstrapping
argument then concludes that (\ref{cc}) holds for any $t>0$. The desired global bound of
$(\rho, u, b)$ in $H^s(\mathbb{T}^2)$ then follows. This finishes the proof of Theorem \ref{thm1}.
Notice that the constant $C^*$ above can be chosen large enough so that $\mathcal{E}_{total}(t) < \frac{1}{2}$, it then shows the rationality of assumption \ref{assum}.
\end{proof}

\vskip .2in
\section*{Acknowledgement}
Wu was partially supported by the National Science Foundation of the
United States under grant DMS 2104682 and the AT\&T Foundation at Oklahoma State
University. Zhu was partially supported by Natural Science Foundation of Shanghai (22ZR1418700).

\vskip .3in

\end{document}